\numberwithin{equation}{section}
\newtheorem{theorem}{Theorem}[section]
\newtheorem{conjecture}[theorem]{Conjecture}
\newtheorem{lemma}[theorem]{Lemma}
\newtheorem{remark}[theorem]{Remark}
\newtheorem{example}[theorem]{Example}
\newtheorem{examples}[theorem]{Examples}
\newtheorem{proposition}[theorem]{Proposition}
\newtheorem{corollary}[theorem]{Corollary}
\theoremstyle{definition}
 \DeclareMathOperator\Ad{{\rm Ad}}
 \DeclareMathOperator\Aut{{\rm Aut}}
 \DeclareMathOperator\Iso{{\rm Isom}}
 \DeclareMathOperator\Lie{{\rm Lie}}
 \DeclareMathOperator\ad{{\rm ad}}
 \DeclareMathOperator\rad{{\rm rad}}
 \DeclareMathOperator\rk{{\rm rk}}
 \DeclareMathOperator\tr{{\rm tr}}
\def\Sp{{\rm Sp}}
\newcommand\Ca{\mathbb O}
\newcommand\C{\mathbb C}
\newcommand\GL{\Gl}
\newcommand\Gl{{\rm GL}}
\newcommand\HH{{\rm H}}
\newcommand\LA{{\rm A}}
\newcommand\LE{{\rm E}}
\newcommand\LF{{\rm F}}
\newcommand\LG{{\rm G}}
\newcommand\R{\mathbb R}
\newcommand\SO{{\rm SO}}
\newcommand\SUxU[2]{ {\rm S(U(}#1) \times {\rm U(}#2{\rm ))} }
\newcommand\SU{{\rm SU}}
\newcommand\SL{{\Sl}}
\newcommand\Sl{{\rm SL}}
\newcommand\Spin{{\rm Spin}}
\newcommand\U{{\rm U}}
\newcommand\Z{\mathbb{Z}}
\newcommand\bl{\vspace{1em}\hfill\\}
\newcommand\eS{{\rm S}}
\newcommand\gl{\g\mathfrak l}
\newcommand\g{{\mathfrak g}}
\newcommand\h{{\mathfrak h}}
\newcommand\id{{\rm id}}
\newcommand\m{{\mathfrak m}}
\newcommand\p{{\mathfrak p}}
\newcommand\so{\mathfrak s \mathfrak o}
\newcommand\spin{\mathfrak s \mathfrak p \mathfrak i \mathfrak n}
\newcommand\stru{\rule[-.65em]{0em}{1.7em}}
\newcommand\triplestru{\rule[-1.95em]{0em}{4.4em}}
\newcommand\str{\rule[-.48em]{0em}{1.7em}}
\newcommand\suxu[2]{\mathfrak s(\mathfrak u({#1}) {+} \mathfrak u({#2}))}
\newcommand\su{\mathfrak s \mathfrak u}
\newcommand\s{\mathfrak s}
\newcommand\z{\mathfrak z}
\renewcommand\H{\mathbb{H}}
\renewcommand\O{{\rm O}}
\renewcommand\P{{\rm P}}
\renewcommand\k{{\mathfrak k}}
\renewcommand\l{\mathfrak l}
\renewcommand\sp{\mathfrak s \mathfrak p}
\begin{document}
\setcounter{tocdepth}{1}


\newcounter{refcounter}
\renewcommand\therefcounter{\bf (\arabic{refcounter})}
\newcommand\mylabel[1]{\refstepcounter{refcounter}\bf\therefcounter\label{#1}}

%
%
%

\pagenumbering{arabic}\pagestyle{plain}

\title{Duality of symmetric spaces and polar actions}

\author[A.~Kollross]{Andreas Kollross}

\address{Fachbereich Mathematik, Technische Universit\"{a}t Darmstadt,
Schlossgartenstr.~7, 64289 Darmstadt, Germany}

\email{kollross@mathematik.tu-darmstadt.de}

\subjclass[2010]{primary 53C35; secondary 57S20}

\date{\today}

\keywords{polar action, symmetric space, duality}


\maketitle

\begin{abstract}
We study isometric actions on Riemannian symmetric spaces of noncompact type
which are induced by reductive algebraic subgroups of the isometry group. We
show that for such an action there exists a corresponding isometric action on a
dual compact symmetric space, which reflects many properties of the original
action. For example, the principal isotropy subgroups of both actions are
locally isomorphic and the dual action is (hyper)polar if and only if the
original action is (hyper)polar. This fact provides many new examples for polar
actions on symmetric spaces of noncompact type and we use duality as a method
to study polar actions by reductive algebraic subgroups in the isometry group
of an irreducible symmetric space. Among other applications, we show that they
are hyperpolar if the space is of type III and of higher rank; we prove that
such actions are orbit equivalent to Hermann actions if they are hyperpolar and
of cohomogeneity greater than one. Furthermore, we classify polar actions by
reductive algebraic subgroups of the isometry group on noncompact symmetric
spaces of rank one.
\end{abstract}


\section{Introduction}
\label{Intro}


A proper isometric action of a Lie group on a Riemannian manifold is called
{\em polar} if there is a complete immersed submanifold which intersects the
orbits perpendicularly and meets all orbits. Such a submanifold is called a
{\em section} for the action. A special case, which occurs in many natural
examples, is when the section is flat in its induced Riemannian metric. In this
case, the action is called {\em hyperpolar}.

Sections of polar actions are always totally geodesic submanifolds. If one
intends to study polar actions of cohomogeneity greater than one, it is
therefore natural to consider the class of Riemannian symmetric spaces, which
-- unlike generic Riemannian manifolds -- admit many nontrivial totally
geodesic submanifolds of dimension greater than one. In fact, the theory of
symmetric spaces is the main source of examples for polar actions. For
instance, the action of an isotropy group of a Riemannian symmetric space is
always hyperpolar. Polar and hyperpolar actions have been studied by many
authors, see e.g.\ the survey \cite{thorbergssonTGSG} for the history of the
subject and a bibliography. Among the hyperpolar actions, the most prominent
special case is the case of {\em cohomogeneity one actions}, i.e.\ actions
where there are orbits of codimension one. Cohomogeneity one actions on
spheres, complex and quaternionic projective space and on the Cayley plane have
been classified in~\cite{hsl}, \cite{takagi}, \cite{datri}, \cite{iwata}.
Hyperpolar actions on compact irreducible Riemannian symmetric spaces have been
classified by the author in~\cite{hyperpolar}. See \cite{BDRT08}, \cite{bt1},
\cite{bt2}, \cite{bt3} for classification results concerning hyperpolar actions
on noncompact symmetric spaces.

If one studies the more general case of polar actions on compact symmetric
spaces, where one does not require the sections to be flat, there is a sharp
contrast between the case of rank one symmetric spaces on the one hand and of
irreducible spaces of higher rank on the other hand. Namely, while there are
many examples of such actions on the spaces of rank one, see~\cite{dadok} and
\cite{pth1} for a classification, there is not even one nontrivial example
known on the irreducible spaces of higher rank and it is an interesting problem
to decide if there are any such actions at all.

The first result in this connection was proved in \cite{brueck}, where it was
shown that a polar action with a fixed point on an irreducible symmetric space
of higher rank is hyperpolar. It has been shown by the author in~\cite{polar}
that polar actions are hyperpolar on the symmetric spaces with simple compact
isometry group and rank greater than one. Earlier, Biliotti~\cite{biliotti} had
completed the classification of all coisotropic and polar actions on compact
irreducible Hermitian symmetric spaces, see also \cite{bg} and \cite{pth2}. His
result lead him to make the following conjecture.
\begin{conjecture}\textup{\cite{biliotti}}\label{ConjBiliotti}
A nontrivial polar action of a connected Lie group on an irreducible compact
Riemannian symmetric space of rank greater than one is hyperpolar.
\end{conjecture}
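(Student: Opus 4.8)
The plan is to use the duality between compact and noncompact symmetric spaces established in this paper to reduce the conjecture, case by case, to situations that are either already settled or accessible through the structure theory of semisimple real Lie groups. After the routine preliminaries -- by the reductions standard in this area we may assume that $H$ is a closed connected subgroup of $\Iso(M)^\circ$ -- one splits according to the Cartan type of $M$: a compact irreducible symmetric space of rank greater than one is, locally, either of type~I, that is $M=U/K$ with $U$ compact simple, or of type~II, that is $M$ is a compact simple Lie group $G$ carrying a bi-invariant metric. In the type~I case $\Iso(M)^\circ$ is simple and the conjecture is exactly the theorem proved by the author in~\cite{polar} (the case of an action with a fixed point being the earlier result of~\cite{brueck}); so the entire difficulty is concentrated in the type~II case.

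For a compact simple Lie group $G$ regarded as a symmetric space the associated orthogonal symmetric Lie algebra is $(\g\oplus\g,\Delta\g)$, and its noncompact dual is $(\gc,\g)$; hence the dual symmetric space is the type~IV space $M^*=\GC/G$, whose isometry group $\GC$ is a simple \emph{real} Lie group of real rank equal to $\rk G>1$. I would then proceed in four steps. First, replace the given polar action on $G$, up to orbit equivalence, by one induced by a reductive algebraic subgroup of $\Iso(G)$; for $M=G$ this amounts to realizing the orbits as those of an action $H_1\times H_2$ operating by $(g_1,g_2)\cdot x=g_1xg_2^{-1}$ with $H_1,H_2$ reductive -- ideally symmetric -- subgroups. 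Second, transport this action by means of the duality of this paper to an isometric action of a reductive algebraic subgroup of $\GC$ on $M^*$, which is again polar. Third, prove the type~IV analogue of the higher-rank statement announced in the abstract for type~III spaces, namely that a polar action of a reductive algebraic subgroup on a type~IV symmetric space of rank greater than one is hyperpolar. Fourth, use once more that duality carries (hyper)polar actions to (hyper)polar ones to conclude that the original action on $G$ is hyperpolar.

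The step I expect to be the main obstacle is the pair of reductions that flank the application of duality. On one side, the duality constructed here is designed for reductive \emph{algebraic} subgroups, so one must show that an arbitrary closed connected subgroup of $\Iso(G)$ that acts polarly is orbit equivalent to such a group; this is plausible -- sections of polar actions are totally geodesic, all slice representations are polar representations, and on a group manifold the Hermann-type actions are abundant -- but establishing it requires a genuine classification-style argument, because a compact simple Lie group admits many closed subgroups far from Hermann type. On the other side, the higher-rank hyperpolarity statement for type~IV spaces does not follow formally from the type~III case and has to be proved in its own right; here the absence of an Iwasawa-type decomposition of the acting group mixing the two factors of $G\times G$, and the necessity of controlling sections that are a priori not flat, are the real sources of difficulty. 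A sensible intermediate target would be to settle the conjecture first for polar actions on $G$ induced by subgroups of $G\times G$, deferring the analysis of the outer part of $\Iso(G)$, which permutes and twists the two factors.
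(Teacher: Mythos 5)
This statement is a \emph{conjecture}, not a theorem of the paper: the author explicitly records it as due to Biliotti, notes that it is settled for type~I spaces by \cite{polar} and for the exceptional group manifolds by \cite{polex}, and states that it ``is still an open problem for compact Lie groups of classical type.'' So there is no proof in the paper to compare yours against, and your proposal is in any case a strategy outline rather than a proof; the question is whether the strategy could be completed with the tools of this paper. It cannot, for two concrete reasons.

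First, your step~2 (transporting the polar action on the group manifold $G$ to the type~IV dual $G^\C/G$) is not available in general. The duality of Theorem~\ref{ThDualAct} is constructed from the noncompact side: it requires the acting subgroup to be canonically embedded with respect to a Cartan decomposition, which on the compact side means the action must have a totally geodesic orbit satisfying~(\ref{EqnDualCan}). Examples~\ref{RemCpt} of the paper exhibit Hermann actions on compact symmetric spaces with no totally geodesic orbit at all, so the map from actions on $M^*$ to actions on $M$ is only partially defined; an arbitrary polar action on $G$ need not admit a dual. Second, and more seriously, your step~3 is circular relative to this paper. The only mechanism the paper has for proving that polar actions of reductive algebraic subgroups on noncompact spaces are hyperpolar (Theorems~\ref{ThPolHyp} and~\ref{ThExcPolHyp}) is to apply Theorem~\ref{ThPolarDual} and quote the \emph{compact} classifications of \cite{polar} and \cite{polex}. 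For a classical type~IV space $G^\C/G$ the required compact input is precisely the open classical type~II case of the conjecture you are trying to prove. A genuinely independent proof of hyperpolarity on classical type~IV spaces would indeed settle the conjecture via Theorem~\ref{ThPolarDual}, but you give no argument for it, and correctly identify it as the main obstacle; as it stands the proposal reduces the open problem to an equivalent open problem.
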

In \cite{polex}, the author has shown that the conjecture also holds for the
symmetric spaces given by compact simple Lie groups of exceptional type endowed
with a biinvariant Riemannian metric. This is still an open problem for compact
Lie groups of classical type. Note that we cannot drop the irreducibility
assumption in the conjecture since otherwise e.g.\ products of transitive with
trivial actions would be counterexamples.

It is an intriguing problem to classify polar and hyperpolar actions in more
general settings, in particular, to ask if a statement similar to
Conjecture~\ref{ConjBiliotti} holds for actions on symmetric spaces of the
noncompact type. However, the straightforward generalization of
Conjecture~\ref{ConjBiliotti}, where one simply drops the hypothesis that the
space acted upon be compact, is known not to be true. Indeed, in
\cite[Proposition 4.2]{BDRT08}, examples of homogeneous foliations (i.e.\
actions where all orbits are principal) are given which are polar, but not
hyperpolar. Note that there are no homogeneous polar foliations on irreducible
compact symmetric spaces, cf.\ \cite[Lemma 1A.2]{pth1}. On the other hand, it
follows from Cartan's fixed point theorem and the result of Br\"{u}ck~\cite{brueck}
that the conjecture still holds for actions on noncompact symmetric spaces if
one requires the group which acts polarly to be compact. Thus it is an
interesting question to what extent properties of polar actions generalize from
the compact to the noncompact setting.

However, while there are a number of strong results, e.g.\ \cite{hyperpolar},
\cite{polar}, \cite{polex}, \cite{pth1} for polar and hyperpolar actions in
the realm of compact symmetric spaces, classification results on the noncompact
side are only available in special cases and for a limited class of spaces see
e,g.\ \cite{BDRT08}, \cite{bt1}, \cite{bt2}, \cite{bt3}, \cite{brueck},
\cite{dk}, \cite{wu}. At first glance, this seems remarkable in view of the
fact that there is a well known duality between Riemannian symmetric spaces of
the compact and the noncompact type. In fact, for every symmetric space of the
noncompact type $M$ there is a dual compact symmetric space $M^*$ which closely
reflects some geometric aspects of its noncompact counterpart, and vice versa.
Moreover, there are many examples of group actions on symmetric spaces for
which there exist obvious analogues on the dual space.

But the connection established by duality is far from being a complete
one-to-one correspondence if one considers isometric Lie group actions. Namely,
there is an obvious bijective map
\begin{equation}\label{EqnDualityMap}
X + Y \ \longmapsto \ X + iY \quad \hbox{for}\quad X \in \k,\;\; Y \in \p
\end{equation}
between the Lie algebras of the isometry groups of~$M$ and $M^*$, but this map
is not a Lie algebra homomorphism and does not, in general, map subalgebras
onto subalgebras. In fact, there are some phenomena like horocycle foliations
on noncompact spaces which appear not to have a counterpart on a dual compact
space. Thus the methods used in the compact case cannot be applied to
noncompact spaces in general.

Then again, there is a special case where duality can be applied, namely when
the Lie algebra $\h$ of the group $H \subseteq G$ acting on the symmetric space
$M = G/K$ of noncompact type is invariant under the Cartan involution. Then the
image of  $\h$ under the map (\ref{EqnDualityMap}) is a subalgebra $\h^*
\subseteq \g^*$ and this defines an action on a compact symmetric space $M^*$
dual to $M$.

It is the content of the Karpelevich-Mostow Theorem, see Section~\ref{KM}, that
a semisimple subalgebra $\h$ in a semisimple Lie algebra $\g$ none of whose
ideals is compact is always conjugate to a subalgebra invariant under a Cartan
decomposition, or equivalently, $H$ always has a geodesic orbit. More
generally, the same conclusion holds if $H \subseteq G$ is a reductive
algebraic subgroup. Hence a dual action exists for such actions.  As we will
show, such a dual action on $M^*$ has many properties in common with the
original action on~$M$, in particular, the action on $M^*$ is (hyper)polar if
and only if the action on $M$ is (hyper)polar. We will use this fact to obtain
a number of new results on polar and hyperpolar actions on noncompact symmetric
spaces by applying duality to earlier results in the compact setting. Our
method is a generalization of~\cite{dk}, where dual actions are considered in
the special case of actions with a fixed point. In a similar fashion, duality
was used in \cite{bt1} to study cohomogeneity one actions on noncompact
symmetric spaces.

As the correspondence between $M$ and $M^*$ is defined by a map on the Lie
algebra level, the construction of dual actions depends on the choice of the
reference point. For the action of a reductive algebraic subgroup $H \subseteq
G$ on a noncompact symmetric space $M=G/K$, the type of the totally geodesic
orbit is unique and therefore the dual action of~$H^*$ on $M^* = G^* / K^*$ is
also unique up to coverings of~$M^*$ (we do not assume $M^*$ to be simply
connected) and conjugacy of~$H^*$. On the other hand, isometric actions on
compact symmetric spaces may have various types of totally geodesic orbits or
no totally geodesic orbits at all. Thus the map $(H) \mapsto (H^*)$ which maps
the conjugacy classes of reductive algebraic subgroups $H \subseteq G$ to
conjugacy classes of subgroups $H^* \subseteq G^*$ is in general neither
injective nor surjective. This phenomenon will be illustrated by several
examples.


I would like to thank Jos\'{e} Carlos D\'{\i}az-Ramos and Antonio J.\ Di Scala for
discussions.


\section{Preliminaries}
\label{Prelim}


Let $\g$ be a real semisimple Lie algebra and let $B(\cdot,\cdot)$ be its
Killing form. An {\em involution} on $\g$ is a Lie algebra automorphism
$\theta$ of~$\g$ such that $\theta^2 = \id_{\g}$.  (Note that in our definition
the involution $\theta$ may be trivial, i.e.\ we may have $\theta = \id_{\g}$.)
Such an involution is called a {\em Cartan involution} on $\g$ if
$B_\theta(X,Y) = -B(X,\theta Y)$ is a positive definite bilinear form. Any real
semisimple Lie algebra has a Cartan involution and any two Cartan involutions
are conjugate by an inner automorphism of~$\g$. For a Cartan involution $\theta
\colon \g \to \g$, we call
\begin{equation}\label{EqnCartanDec}
\g = \k \oplus \p,
\end{equation}
where $\k$ is the $(+1)$-eigenspace and $\p$ is the $(-1)$-eigenspace
of~$\theta$, the {\em Cartan decomposition} corresponding to~$\theta$. Note
that $\k$ is a maximal compact subalgebra of~$\g$. It follows that $\g^* := \k
\oplus i \p \subset \g(\C) = \g \otimes \C$ is a compact real semisimple Lie
algebra, where $i = \sqrt{-1}$ is the imaginary unit. We say that $\g^*$ is the
Lie algebra {\em dual} to $\g$ with respect to the involution~$\theta$.
Moreover, we say that a subalgebra $\h \subseteq \g$ is {\em canonically
embedded} with respect to the Cartan decomposition (\ref{EqnCartanDec}) if
$\theta(\h) = \h$ or, equivalently,
\begin{equation}\label{EqnCartanDec2}
\h = (\h \cap \k) \oplus (\h \cap \p).
\end{equation}
If $\h \subseteq \g$ is canonically embedded then $\h^* := (\h \cap \k) \oplus
i(\h \cap \p)$ is a subalgebra of~$\g^*$.

If the pair of Lie groups $(G,K)$ where $G$ is a semisimple Lie group and $K$ a
compact subgroup corresponds to the pair of Lie algebras $(\g,\k)$ and the pair
of compact Lie groups $(G^*,K^* )$ corresponds to $(\g^*,\k)$, we say that the
compact symmetric space $M^* = G^*/K^*$ equipped with a Riemannian metric
induced by the negative of the Killing form on~$\g^*$ is a {\em compact dual}
of the symmetric space $M = G/K$.

In a wider sense, we call two symmetric spaces $X$ and $Y$ {\em dual} to each
other if there exist decompositions of the universal coverings $\tilde X = X_1
\times \ldots \times X_n$ and $\tilde Y = Y_1 \times \ldots \times Y_n$ such
that for each $j \in \{1,\ldots,n\}$, either $X_j$ and $Y_j$ are Euclidean of
the same dimension or $X_j$ and $Y_j$ are irreducible symmetric spaces and such
that one is the compact dual of the other.

Let the Riemannian metric on $M = G/K$ be given by a scalar product $\beta
\colon \p \times \p \to \R$. We will henceforth assume that a compact dual of a
symmetric space $M = G/K$ is equipped with the corresponding {\em dual metric}
given by the scalar product $\beta^* \colon i\p \times i\p \to \R$, which we
define by $\beta^*(X,Y) := \beta(iX,iY)$. In particular, if $M$ is endowed with
the Riemannian metric induced by the Killing form of~$\g$, then the dual metric
is the Riemannian metric induced by the negative of the Killing form of~$\g^*$.
However, in the applications we study in Sections~\ref{Applications},
\ref{PAHS}, \ref{PACH}, \ref{PAQH} and \ref{PACayHP}, the symmetric spaces
under consideration are irreducible and any $G$-invariant Riemannian metric is
unique up to a constant scaling factor (whose choice is irrelevant here).

Consider the action of a group~$H$ on a set~$M$, denoted by $H \times M \to M$,
$(h,m) \mapsto h \cdot m$. For a point $p \in M$, we define $H_p := \{ h \in H
\mid h \cdot p = p \} \subseteq H$ to be the {\em stabilizer} or {\em isotropy
subgroup} at the point~$p$ and by $H \cdot p := \{ h \cdot p \mid h \in H\}$ we
denote the {\em orbit} of the $H$-action through the point~$p$. Since the
isotropy subgroups of the points along an orbit are conjugate, we may define
the {\em orbit type} of the orbit $H \cdot p$ as the conjugacy class $(H_p)$ of
the subgroup $H_p$ in $H$. This defines a partial order on the set of orbits:
We define $H \cdot p \preccurlyeq H \cdot q$ if and only if there is an element
$h \in H$ such that $h H_p h^{-1} \supseteq H_q$.

An action of a Lie group $H$ on a manifold $M$ is called {\em proper} if the
map $G \times M \to M \times M$, $(g, p) \mapsto (g \cdot p, p)$ is proper. A
proper isometric action of a Lie group $H$ on a Riemannian manifold is called
{\em polar} if there exists a complete immersed submanifold $\Sigma$ which
meets all the orbits of the group action, i.e.\ $G \cdot \Sigma = M$, and in
such a way that each intersection between $\Sigma$ and an orbit is orthogonal,
i.e.\ $T_p\Sigma \perp T_p(H \cdot p)$ for all $p \in \Sigma$. Such a
submanifold $\Sigma$ is called a {\em section} for the $H$-action. It is well
known that sections are totally geodesic submanifolds. All sections of a polar
actions are conjugate by the group action. Obvious examples of polar actions
are given by transitive actions, where the points of the manifold are sections,
and also by actions with discrete orbits, where the manifold itself is a
section. We will tacitly assume polar actions to be nontrivial in the sense
that the orbits are of positive dimension since otherwise one gets technical
counterexamples e.g.\ for Corollary~\ref{CorSection} and
Theorems~\ref{ThPolHyp}, \ref{ThExcPolHyp} below.

By $\Iso(M)$ we will denote the group of isometries of a Riemannian manifold,
by $\Iso(M)_0$ its connected component. If a Lie group $H$ acts isometrically
and effectively on a Riemannian manifold, we may assume that $H \subseteq
\Iso(M)$. We say that two Lie group actions on two Riemannian manifolds $M_1$
and $M_2$ are {\em conjugate} if there is an equivariant isometry $M_1 \to
M_2$. The actions of two subgroups $H_1, H_2 \subseteq \Iso(M)$ on a Riemannian
homogeneous space~$M$ are conjugate if and only if $H_1$ and $H_2$ are
conjugate in $\Iso(M)$.

For proper Lie group actions on connected manifolds we have the following well
known facts, see \cite{palais}. There is a uniquely determined maximal orbit
type of the $H$-action. The orbits which are of this type are called {\em
principal orbits} of the $H$-action, the corresponding isotropy subgroups are
called {\em principal isotropy subgroups}. The union of principal orbits is an
open and dense subset of~$M$. The codimension of a principal orbit in $M$ is
called the {\em cohomogeneity} of the action. At any point $p \in M$, the
isotropy subgroup $H_p$ acts (by the differentials at~$p$ of the maps $x
\mapsto g \cdot x$) on the tangent space $T_p M$. For this linear action, the
tangent space $T_p(H \cdot p)$ and the normal space $N_p(H \cdot p)$ are
invariant subspaces; the action of~$H_p$ on $N_p(H \cdot p)$ thus defined is
called the {\em slice representation} of the $H$-action at the point~$p$. The
slice representation is trivial if and only if the orbit through $p$ is
principal. The Slice Theorem asserts that a tubular neighborhood of an orbit $H
\cdot p$ is equivariantly diffeomorphic to a tubular neighborhood around the
zero section in the normal bundle $H \times_{H_p} N_p(H \cdot p)$, where $H_p$
acts by the slice representation on the normal space $N_p(H \cdot p)$. In
particular, the principal isotropy subgroup of the $H$-action on $M$ is
conjugate to any principal isotropy subgroup of an arbitrary slice
representation and thus the cohomogeneity of each slice representation is the
same as the cohomogeneity of the $H$-action on~$M$. Slice representations of
polar actions are polar~\cite [Theorem~4.6] {pt}. For a polar action, the
dimension of a section equals the cohomogeneity of the action.

Let $M$ be a Riemannian symmetric space and let $p \in M$. Let $G = \Iso(M)_0$
and let $K = G_p$. An action of a closed subgroup $H \subset G$ is called {\em
Hermann action} if there is an involutive automorphism $\sigma \colon \g \to
\g$ such that $\h = \g^\sigma$, where $\g^\sigma$ denotes the fixed point set
of~$\sigma$. It was shown by Hermann~\cite{hermann} that these actions are
hyperpolar on compact symmetric spaces. We say that two isometric actions on
two Riemannian manifolds $M$ and $N$ are {\em orbit equivalent} if there is an
isometry $F \colon M \to N$ which maps each connected component of an orbit
in~$M$ onto a connected component of an orbit in~$N$. Obviously, the
(hyper-)polarity of an action depends only on its orbit equivalence class.


\section{The Karpelevich-Mostow Theorem}
\label{KM}


The following Theorems~\ref{ThKarp} and \ref{ThMost} are equivalent and their
content is called the {\em Karpelevich-Mostow Theorem}. Its geometric version
was proved by Karpelevich~\cite{karpelevich}.
\begin{theorem}\label{ThKarp}
Let $M$ be a symmetric space of non-positive curvature without flat factors.
Then any connected and semisimple subgroup $H \subseteq \Iso(M)$ has a totally
geodesic orbit $H \cdot p \subseteq M$.
\end{theorem}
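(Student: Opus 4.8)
The plan is to translate the statement into a question about Cartan involutions and then settle it by a convexity argument on a Hadamard manifold.

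\textbf{Reduction.} Since $M$ is symmetric, of non-positive curvature and without flat factors, it is of the noncompact type, so $G:=\Iso(M)_0$ is semisimple with trivial centre and without compact ideals, and $M=G/K$ for a maximal compact $K$; since $H$ is connected, $H\subseteq G$ and $\h:=\Lie(H)$ is a semisimple subalgebra of $\g:=\Lie(G)$. I claim it suffices to produce \emph{one} Cartan involution $\vartheta$ of $\g$ with $\vartheta(\h)=\h$. Indeed, writing $\vartheta=\Ad(g)\,\theta\,\Ad(g)^{-1}$ for a fixed Cartan involution $\theta$ (all Cartan involutions are $\Ad(G)$-conjugate) and putting $p:=gK$, the associated decomposition $\g=\k_p\oplus\p_p$, with $T_pM\cong\p_p$, gives $\h=(\h\cap\k_p)\oplus(\h\cap\p_p)$; the summand $\h\cap\p_p$ is a Lie triple system because $[\h\cap\p_p,\h\cap\p_p]\subseteq\h\cap\k_p$ and $[\h\cap\k_p,\h\cap\p_p]\subseteq\h\cap\p_p$, so $\Exp_p(\h\cap\p_p)$ is a complete totally geodesic submanifold; since the isotropy algebra at $p$ is $\h\cap\k_p$, this submanifold and the orbit $H\cdot p$ have the same tangent space $\h\cap\p_p$ at $p$, and by connectedness $H\cdot p=\Exp_p(\h\cap\p_p)$, which is totally geodesic. (The converse direction of this equivalence is not needed here.)

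\textbf{The algebraic core.} It remains to show that a semisimple subalgebra $\h$ of a real semisimple Lie algebra $\g$ is invariant under some Cartan involution of $\g$. Realise the space $\mathcal C$ of Cartan involutions of $\g$ as the Hadamard manifold $M\cong G/K$ via $gK\mapsto\Ad(g)\theta\Ad(g)^{-1}$; equivalently, via $\Ad\colon G\hookrightarrow\GL(\g)$ and $\vartheta\mapsto B_\vartheta:=-B(\cdot,\vartheta\,\cdot)$ with $B$ the Killing form, $\mathcal C$ is the totally geodesic $\Ad(G)$-orbit of $B_\theta$ inside the space $P$ of all inner products on $\g$. From the $\ad$-invariance of $B$ one gets $\ad(X)^{*_\vartheta}=-\ad(\vartheta X)$ for the $B_\vartheta$-adjoint, so, $\g$ being centreless, $\vartheta(\h)=\h$ holds precisely when the subspace $\ad(\h)\subseteq\End(\g)$ is stable under $*_\vartheta$. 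The set of such $\vartheta$ is the zero set of a natural nonnegative ``defect'' function $\Phi$ on $\mathcal C$, built from the $B_\vartheta$-orthogonal projections of a basis of $\ad(\h)$ onto $\ad(\h)^{\perp}$; using that $t\mapsto\log q_t(v,v)$ is convex along geodesics $t\mapsto q_t$ of $P$ one checks $\Phi$ is geodesically convex on $\mathcal C$. The essential point, then, is that $\Phi$ is \emph{proper}, so that it attains its minimum at some $\vartheta_0$; and at a minimiser the first variation of $\Phi$ together with $\h=[\h,\h]$ (equivalently, passing to the reductive Zariski closure of $H$ and invoking the Kempf--Ness/Hilbert--Mumford description of minimal vectors) forces $\ad(\h)$ to be $*_{\vartheta_0}$-stable, i.e.\ $\vartheta_0(\h)=\h$. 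With Step one, this finishes the proof.

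\textbf{Main obstacle.} The whole difficulty sits in the last step --- the properness of $\Phi$ and the criticality argument at the minimum. This is precisely the noncompact phenomenon that separates the statement from Cartan's fixed-point theorem, and it is where semisimplicity of $H$ is indispensable: for a non-reductive group (e.g.\ a horospherical subgroup) $\Phi$ has no minimiser, minimising sequences escape to the boundary $\partial_\infty\mathcal C$, and indeed no totally geodesic orbit exists. In substance this step is Mostow's theorem on self-adjoint (reductive) subgroups, the algebraic counterpart of Theorem~\ref{ThKarp}.
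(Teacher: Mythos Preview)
The paper does not supply its own proof of this theorem: it states Theorem~\ref{ThKarp} and its algebraic counterpart Theorem~\ref{ThMost} as the Karpelevich--Mostow theorem and cites Karpelevich, Mostow, and the geometric proof of Di~Scala--Olmos. So there is no in-paper argument to compare against; what can be assessed is whether your outline would constitute an independent proof.

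Your \textbf{Reduction} is correct and is precisely the equivalence the paper asserts (without details) between Theorems~\ref{ThKarp} and~\ref{ThMost}; the passage from a $\vartheta$-stable $\h$ to a totally geodesic orbit via the Lie triple system $\h\cap\p_p$ is exactly what the paper records in Remark~\ref{RemKM}. One small point worth making explicit: the equality $H\cdot p=\Exp_p(\h\cap\p_p)$ uses that the transvection algebra $[\h\cap\p_p,\h\cap\p_p]\oplus(\h\cap\p_p)$ lies in $\h$, so the complete totally geodesic leaf is already an $H$-orbit of the right dimension, and closedness of the leaf in the Hadamard manifold then gives equality.

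Your \textbf{Algebraic core} is a sound roadmap in the spirit of Richardson--Slodowy / Kempf--Ness and close to the Di~Scala--Olmos argument the paper cites, but as you candidly flag under \textbf{Main obstacle}, the two load-bearing steps --- properness of $\Phi$ on $\mathcal C$ and the vanishing of the defect at a critical point --- are not carried out. Those steps \emph{are} the theorem: properness is exactly where semisimplicity of $\h$ (equivalently $\h=[\h,\h]$, or nontriviality of the restricted Killing form) prevents minimising sequences from escaping along unipotent directions, and the criticality argument needs an honest computation (e.g.\ the first variation of $\Phi$ along one-parameter geodesics $t\mapsto\exp(tX)\cdot\vartheta$ together with $\ad$-invariance of $B$) rather than an appeal to Mostow. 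Invoking ``Mostow's theorem on self-adjoint subgroups'' at that point is circular, since that is Theorem~\ref{ThMost}. As written, then, the proposal is a correct reduction plus a correct strategy, but not yet a proof.
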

The algebraic version can be stated as follows.
\begin{theorem}\label{ThMost}
Let $\g$ be a real semisimple Lie algebra such that each simple ideal is
noncompact and let $\h \subseteq \g$ be a semisimple subalgebra. Then $\h$ is
canonically embedded with respect to some Cartan decomposition of~$\g$.
\end{theorem}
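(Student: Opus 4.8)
The plan is to deduce Theorem~\ref{ThMost} from its geometric counterpart, Theorem~\ref{ThKarp}. First I pass to the adjoint group: let $G = \Inn(\g)$ and let $H \subseteq G$ be the connected subgroup with Lie algebra $\ad(\h)$, which is semisimple since $\h$ is. Because every simple ideal of $\g$ is noncompact, $G$ is semisimple with no compact factors, so for a maximal compact subgroup $K \subseteq G$ the space $M = G/K$ with the metric induced by $-B$ is a Riemannian symmetric space of noncompact type; in particular it has nonpositive curvature without flat factors and $\Iso(M)_0 = G$. Applying Theorem~\ref{ThKarp} to the connected semisimple group $H \subseteq \Iso(M)$ gives a point $p \in M$ for which the orbit $N := H \cdot p$ is totally geodesic. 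Conjugation by the geodesic symmetry $s_p$ of $M$ at $p$ induces an involutive automorphism of $G$ whose differential is a Cartan involution $\theta$ of $\g$, with Cartan decomposition $\g = \k \oplus \p$, $\k = \Lie(G_p)$. The theorem will follow once we show $\theta(\h) = \h$, i.e.\ that $\h$ is canonically embedded with respect to this decomposition.

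To prove $\theta(\h) = \h$, note first that the complete totally geodesic submanifold $N$ is invariant under $s_q$ for every $q \in N$, and the maps $s_q|_N$ are the geodesic symmetries of $N$; thus $N$ is itself a symmetric space. Since $H$ is semisimple and connected, it cannot act transitively on a Euclidean factor of positive dimension: a semisimple group acting by affine isometries on $\R^k$ has vanishing first cohomology, hence fixes a point and then acts linearly, so its orbits lie in spheres. Therefore $N$ has no Euclidean de~Rham factor and is of noncompact type, and its transvection group has Lie algebra $\l := [\m,\m] \oplus \m \subseteq \g$, where $\m = T_pN \subseteq \p$; this subalgebra is $\theta$-stable because $[\m,\m] \subseteq \k$ and $\m \subseteq \p$, and $\theta$ restricts on it to the Cartan involution of $N$. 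Now $H$ preserves $N$ and acts transitively on it, so its image in $\Iso(N)_0$ is a connected transitive semisimple subgroup. One shows that such a subgroup is all of $\Iso(N)_0$; consequently, writing $\h = \z \oplus \ss$ for the decomposition into the ideal $\z$ acting trivially on $N$ and the complementary ideal $\ss$, one gets $\z \subseteq \h \cap \k$ (so $\theta(\z) = \z$), while $\ss$ maps isomorphically onto the transvection algebra and in fact coincides with $\l$ inside $\g$, whence $\theta(\ss) = \ss$ and $\theta(\h) = \h$.

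The main obstacle is the last step: identifying $\ss$ with the canonically $\theta$-stable transvection algebra $\l$ \emph{as a subalgebra of $\g$}, equivalently ruling out that $\h$ is embedded in $\g$ as a ``skew'' complement of the ineffective kernel of the action on $N$. This is exactly the point at which semisimplicity of $\h$ is used in an essential way — semisimple subalgebras are reductive in $\g$ and act on $N$ effectively modulo a compact ideal — together with the factorization fact that the identity component of the isometry group of a symmetric space of noncompact type has no proper connected semisimple subgroup acting transitively. Everything else (the reduction to the adjoint group, the application of Theorem~\ref{ThKarp}, the correspondence between points of $M$ and Cartan involutions of $\g$, and the absence of Euclidean factors in $N$) is routine.
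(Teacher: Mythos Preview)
The paper does not actually prove Theorem~\ref{ThMost}; it attributes the algebraic version to Mostow and a later geometric proof to Di~Scala--Olmos, and merely asserts the equivalence with Theorem~\ref{ThKarp}. So there is nothing in the paper to compare against --- your proposal is an attempt to supply the implication from Theorem~\ref{ThKarp} to Theorem~\ref{ThMost} that the paper leaves as a citation.

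Your reduction (adjoint group, totally geodesic orbit $N=H\cdot p$, Cartan involution $\theta$ at $p$, splitting $\h=\z\oplus\ss$ with the ineffectivity ideal $\z\subseteq\k$) is set up correctly, and you rightly isolate the crux as the identification $\ss=\l:=[\m,\m]\oplus\m$ inside~$\g$. But your resolution of that crux has a real gap. The ``factorization fact'' you invoke --- that $\Iso(N)_0$ admits no proper connected semisimple transitive subgroup --- is essentially as deep as what you are proving; the standard way to see it is to apply Theorem~\ref{ThMost} itself to $\Lie(\Iso(N)_0)$ (once the transitive subalgebra is $\theta_N$-stable it contains $\p_N$, hence $\k_N=[\p_N,\p_N]$, hence everything), so quoting it here is circular unless you supply an independent argument. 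Moreover, even granting that the images of $\ss$ and $\l$ in $\Lie(\Iso(N))$ agree, the step $\ss=\l$ \emph{inside~$\g$} needs more than the phrase ``semisimple subalgebras are reductive in~$\g$'': one must observe that the pointwise stabilizer $\Lc$ of~$N$ in~$\g$ satisfies $[\Lc,\m]=0$ (since fixing $N$ pointwise forces the differential at $p$ to fix $T_pN$), hence $[\Lc,\l]=0$, so $\ss\subseteq\l\oplus\Lc$ is the graph of a Lie algebra homomorphism $\l\to\Lc$; since $\l$ has no compact ideals and $\Lc\subseteq\k$ is compact, this homomorphism vanishes and $\ss=\l$. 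With those two points genuinely filled in you would essentially be reconstructing the Di~Scala--Olmos proof the paper cites.
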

In this form, the statement was proven by Mostow~\cite[Theorem 6]{mostow}.
Recently, a geometric proof was obtained by Di Scala and Olmos~\cite{dso}.
There exists a generalization of the Karpelevich-Mostow Theorem~\ref{ThKarp}
for the actions of reductive algebraic subgroups of $\Iso(M)$ on $M$, see
\cite{OV} for details. Let us briefly review the definitions necessary to
formulate this more general statement. Let $\g$ be a semisimple complex Lie
algebra. One may identify $\g$ with the linear complex Lie algebra $\ad\g
\subseteq \gl(\g)$ and thus one can define the notion of an algebraic
subalgebra of~$\g$. A subalgebra $\h \subseteq \g$ is called an {\em algebraic
subalgebra} of~$\g$ if $\h \subseteq \g$ is the Lie algebra of some algebraic
subgroup of the complex algebraic group $\GL(\g)$. Furthermore, a subalgebra
$\h \subseteq \g$ is called {\em reductive subalgebra} if the radical of~$\h$
consists of elements which are semisimple in $\g$, i.e.\ the maps $\ad z$ are
semisimple linear endomorphisms of~$\g$ for all $z \in \rad(\h)$. Equivalently,
$\h \subseteq \g$ is called a {\em reductive subalgebra} if it can be written
as $\h = \z(\h) \oplus \h'$ where the center $\z(\h)$ consists of semisimple
elements of~$\g$ and where the derived subalgebra $\h'$ is semisimple. If an
algebraic subalgebra of~$\g$ is reductive in the sense just defined, we call it
a {\em reductive algebraic subalgebra} of~$\g$. For a real semisimple Lie
algebra $\g$ we say that a subalgebra $\h \subseteq \g$ is {\em (reductive)
algebraic} if its complexification $\h(\C) = \h \otimes \C$ is a (reductive)
algebraic subalgebra of $\g(\C) = \g \otimes \C$.

\begin{theorem}\label{ThOniVin}\cite[Theorem 3.6, Ch.~6]{OV}\hfill\\
An algebraic subalgebra of a real semisimple Lie algebra~$\g$ is reductive if
and only if it is canonically embedded in $\g$ with respect to some Cartan
decomposition of~$\g$.
\end{theorem}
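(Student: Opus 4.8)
The plan is to prove both directions of the equivalence in Theorem~\ref{ThOniVin}, noting that one direction is essentially Mostow's Theorem~\ref{ThMost} extended to the reductive case, while the converse is elementary.

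First I would handle the easy direction: suppose $\h \subseteq \g$ is canonically embedded with respect to a Cartan decomposition $\g = \k \oplus \p$ with Cartan involution $\theta$, so $\h = (\h \cap \k) \oplus (\h \cap \p)$. Since $\theta$ is a Lie algebra automorphism preserving $\h$, the decomposition $\h = \h_{\k} \oplus \h_{\p}$ is a Cartan-type decomposition of $\h$, and the restriction $B_\theta|_{\h \times \h}$ is positive definite. I would first reduce $\h$ to a reductive Lie algebra in the abstract sense, writing $\h = \z(\h) \oplus \h'$; the point is to show $\ad_\g z$ is a semisimple endomorphism for every $z \in \z(\h)$, equivalently for the center. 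Because $\h$ is $\theta$-invariant, $\z(\h)$ is $\theta$-invariant, so $\z(\h) = (\z(\h) \cap \k) \oplus (\z(\h) \cap \p)$; elements of $\z(\h) \cap \k$ have $\ad_\g$-eigenvalues purely imaginary (since $\k$ is compactly embedded) hence are semisimple, and elements of $\z(\h) \cap \p$ are symmetric with respect to the inner product $B_\theta$ hence are $\ad_\g$-semisimple with real eigenvalues. A sum of two commuting semisimple endomorphisms of opposite "types" as above is again semisimple, so every $z \in \z(\h)$ is semisimple in $\g$, which is exactly the condition that $\h$ be a reductive subalgebra. Combined with the hypothesis that $\h$ is algebraic, this gives that $\h$ is reductive algebraic.

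For the converse — that a reductive algebraic subalgebra $\h \subseteq \g$ is canonically embedded with respect to some Cartan decomposition — I would reduce to Mostow's theorem. Write $\h = \z(\h) \oplus \h'$ with $\h'$ semisimple and $\z(\h)$ consisting of semisimple elements of $\g$. The idea is to build a slightly larger semisimple subalgebra or otherwise apply a conjugation argument. One clean approach: the normalizer or a suitable "algebraic envelope" argument, or directly, invoke that $\h'$ is semisimple with no compact ideals after intersecting with the semisimple part — but $\h'$ may have compact ideals, so Mostow's Theorem~\ref{ThMost} does not apply verbatim. Instead I would use the structure theory: decompose $\h' = \h'_{nc} \oplus \h'_{c}$ into the sum of the noncompact-type and compact-type ideals. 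A compact subalgebra of $\g$ is automatically canonically embedded in a Cartan decomposition (it lies in a maximal compact subalgebra, all of which are $\Ad(G)$-conjugate), and Mostow handles the noncompact part; the center, being abelian and consisting of semisimple elements, can be placed compatibly. The real work is to do all of this simultaneously with respect to one common Cartan involution $\theta$ — this is the step I expect to be the main obstacle, since one must arrange that a single $\theta$ stabilizes $\h'_{nc}$, $\h'_c$, and $\z(\h)$ at once.

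To overcome that obstacle I would appeal to the maximality/conjugacy machinery for Cartan involutions: one shows the (abstract) reductive algebraic group with Lie algebra $\h$ admits a Cartan involution, i.e.\ a maximal compact subgroup, and then uses a fixed-point or averaging argument (as in the Karpelevich-Mostow circle of ideas, or the convexity argument of Mostow) to conjugate so that a Cartan involution $\theta$ of $\g$ restricts to one of $\h$; invariance of $\h$ under $\theta$ is then equivalent to~(\ref{EqnCartanDec2}). Concretely, one embeds $\h$ in $\ad\g \subseteq \gl(\g)$, picks an inner product on $\g$ adapted to $\h$ making $\h$ self-adjoint (possible because $\h$ is reductive algebraic, so its algebraic group is self-adjoint in some inner product by the theory of reductive algebraic groups), and matches this inner product with $B_\theta$ for a suitable Cartan involution $\theta$ of $\g$; then $\h$ is stable under the corresponding Cartan involution. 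Since this reduces the statement to the already-cited results of Mostow~\cite{mostow} and the reductive-algebraic-group structure theory in~\cite{OV}, I would present the proof as a combination of Theorem~\ref{ThMost} with these standard facts rather than reproving them, and I would flag the simultaneous-$\theta$ coordination as the only genuinely delicate point.
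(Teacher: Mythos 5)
The paper gives no proof of this statement at all -- it is quoted verbatim from \cite[Theorem 3.6, Ch.~6]{OV} -- so there is nothing internal to compare your argument against; I can only assess the sketch on its own terms. Your forward (``easy'') direction is essentially right but elides its one nontrivial step: before the eigenvalue analysis of $\z(\h)\cap\k$ and $\z(\h)\cap\p$ can do anything, you must know that $\rad(\h)=\z(\h)$, i.e.\ that $\h$ is reductive as an abstract Lie algebra, and you assert this reduction without proof. The clean way to get it is to note that the $B_\theta$-transpose of $\ad X$ is $-\ad(\theta X)$, so $\theta$-stability of $\h$ makes $\ad\h\subseteq\gl(\g)$ closed under transposition with respect to the positive definite form $B_\theta$; such a linear Lie algebra acts completely reducibly on $\g$, hence is reductive with its center acting by semisimple endomorphisms. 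This single observation both supplies the missing identification of the radical and subsumes your case analysis on $\k$- and $\p$-components.

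The genuine gap is in the converse, exactly where you flagged it, and your proposed fix does not close it. Choosing first an inner product on $\g$ that makes $\ad\h$ self-adjoint and then trying to ``match'' it with $B_\theta$ runs the quantifiers in the wrong order: the forms $B_\theta$ constitute a single orbit of inner products under the inner automorphism group (all Cartan involutions being conjugate), and a generic inner product for which $\ad\h$ is self-adjoint does not lie in that orbit, so there is nothing to match. What is actually needed is Mostow's self-adjointness theorem in its precise form: \emph{fix} one Cartan involution $\theta$, note that $\ad\g$ is self-adjoint with respect to $B_\theta$, and then conjugate the reductive \emph{algebraic} subalgebra $\h$ by an inner automorphism of $\g$ so that $\ad\h$ becomes self-adjoint with respect to that same fixed $B_\theta$; since self-adjointness of $\ad\h$ is equivalent to $\theta(\h)=\h$, this yields (\ref{EqnCartanDec2}). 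That conjugation statement is not a consequence of Theorem~\ref{ThMost}, which treats only semisimple $\h$, and it is not delivered by your decomposition of $\h'$ into compact and noncompact ideals plus the center -- that piecewise treatment is a detour precisely because the whole difficulty is achieving a \emph{single} $\theta$ for all pieces at once. As written, deferring this step to ``the reductive-algebraic-group structure theory in \cite{OV}'' is circular, since the statement to be proved is that structure theory; an honest proof must either cite Mostow's self-adjoint groups theorem explicitly or reprove it.
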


Note that in particular any semisimple subalgebra of a real (or complex)
semisimple Lie algebra is a reductive algebraic subalgebra. The theorem holds
also for a compact Lie algebra~$\g$, but since a Cartan decomposition is
trivial for compact $\g$, the assertion of the theorem is void in this case.

Let $G$ be semisimple real Lie group. We say that a subgroup $H \subseteq G$ is
a {\em reductive algebraic subgroup} if the Lie algebra $\h$ of~$H$ is a
reductive algebraic Lie algebra of~$\g$. One has to be careful not to confuse
the two notions of a reductive {\em sub}algebra of a Lie algebra on the one
hand and of a reductive Lie algebra on the other hand. (A Lie algebra is said
to be {\em reductive} if it is a direct sum of an abelian and a semisimple Lie
algebra.) Indeed, each non-semisimple element of a Lie algebra spans a
one-dimensional, hence abelian subalgebra which is not a reductive subalgebra,
cf.\ Example~\ref{ExplNilp}.

\begin{remark}\label{RemKM}\rm
Let $M$ be a symmetric space of non-positive curvature without flat factors.
Let $G = \Iso(M)_0$ be the connected component of the isometry group of~$M$.
Then $G$ is semisimple. Assume that $H \subseteq G$ is a connected reductive
algebraic subgroup. Then there is a point $q$ such that $H \cdot q$ is a
totally geodesic submanifold of~$M$.

In fact, it follows from Theorem~\ref{ThOniVin} that there is a point $q \in M$
such that $\h$ is canonically embedded, i.e.\ (\ref{EqnCartanDec2}) holds, with
respect to the Cartan decomposition $\g = \k \oplus \p$ where $K = G_q$ is the
stabilizer of~$q$ in~$G$. In this case, $\h \cap \p \subseteq \g$ is a Lie
triple system and it follows that the $H$-orbit through~$q$ is a totally
geodesic submanifold of~$M$.
\end{remark}

\begin{proposition}\label{PropKM}
Let $M$ be a simply connected symmetric space of non-positive curvature. Let
$H$ be a connected subgroup of the isometry group of~$M$. Assume there is a
point $q \in M$ such that the orbit $H \cdot q$ is a totally geodesic
submanifold of~$M$. Then the following statements are true.
\begin{enumerate}

\item As a differentiable $H$-manifold, $M$ is equivariantly diffeomorphic
    to the normal bundle of the orbit $H \cdot q$.

\item The orbit type $(H_q)$ of $H \cdot q$ is minimal, i.e.\ we have
    $(H_p) \succcurlyeq (H_q)$ for all $p \in M$.

\item The isotropy subgroup $H_q \subseteq H$ is a maximal compact
    subgroup.

\end{enumerate}
\end{proposition}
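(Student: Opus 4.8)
The plan is to reduce everything to a Cartan-type decomposition for the orbit. Write $M = G/K$ with $G = \Iso(M)_0$ semisimple (so $M$ has no flat factors; if $M$ has a Euclidean factor one treats it separately, but the proposition as stated allows nonpositive curvature, so I would first split off the flat factor, on which the statement is elementary, and assume $M$ is of noncompact type). Fix the reference point $q$ and let $\g = \k \oplus \p$ be the Cartan decomposition with $K = G_q$, so that $\p \cong T_q M$ via the usual identification. The hypothesis that $H \cdot q$ is totally geodesic means precisely that $\m := T_q(H\cdot q) \subseteq \p$ is a Lie triple system. Set $\h_0 := \h \cap \k$; this is the Lie algebra of $H_q$. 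The first thing I would establish is the decomposition
\begin{equation*}
\h = \h_0 \oplus \m, \qquad \m \subseteq \p,
\end{equation*}
i.e.\ that $\h$ is canonically embedded with respect to this particular Cartan decomposition. This follows because $\h_0$ acts on $\p$ preserving $\m$, and $[\m,\m] \subseteq \k$ (Lie triple system) together with $[\m,\m] \subseteq \h$ forces $[\m,\m] \subseteq \h_0$; since $\dim \h = \dim H_q + \dim(H\cdot q) = \dim \h_0 + \dim \m$ and $\h_0 \cap \m \subseteq \k \cap \p = 0$, the sum is direct and exhausts $\h$. Thus $\theta(\h) = \h$.

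Granting this, I would prove the three items in a natural order. For (iii): since $\h = \h_0 \oplus \m$ with $\h_0 \subseteq \k$ and $\m \subseteq \p$, the restriction $\theta|_\h$ is a Cartan involution of $\h$ (its $B_\theta$-form, being the restriction of the positive definite $B_\theta$ on $\g$, is positive definite), hence $\h_0$ is a maximal compactly embedded subalgebra of $\h$; because $H$ is algebraic (this is the content of Remark~\ref{RemKM} and Theorem~\ref{ThOniVin}, which guarantee precisely this canonical embedding situation), $H_q$ is a maximal compact subgroup of $H$. For (i): by the Cartan decomposition $H = (\Exp \m) H_q$ (polar/Cartan-type decomposition of the reductive algebraic group $H$, since $\m$ is a Lie triple system on which $\h_0$ acts and $\exp\colon \m \to \Exp\m$ is a diffeomorphism onto a totally geodesic submanifold of $M$), the orbit map and the normal exponential map fit together: the normal space $N_q(H\cdot q)$ inside $T_q M = \p$ is $\m^{\perp}$, and $H \times_{H_q} N_q(H\cdot q) \to M$, $[h, v] \mapsto h \cdot \Exp_q(v)$, is a diffeomorphism. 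Injectivity and surjectivity here come from the fact that $M$ is a Hadamard manifold (Cartan–Hadamard: $\Exp_q$ is a global diffeomorphism) combined with the decomposition $H = (\Exp\m)H_q$ and the totally geodesic nature of $H\cdot q$; concretely, any $p \in M$ is $\Exp_q(X)$ for a unique $X \in \p$, and one uses the $H_q$-action and convexity to move $X$ into $\m^\perp$. For (ii): from the diffeomorphism in (i), every point $p$ lies in a tube around $H\cdot q$, and the slice representation of $H_q$ on $N_q(H\cdot q)$ has $H_q$ itself as its principal isotropy only in the trivial-slice directions; more simply, for $p = \Exp_q(v)$ with $v \in \m^\perp$, the isotropy $H_p$ fixes the geodesic from $q$ to $p$, hence fixes $q$, so $H_p \subseteq H_q$, giving $(H_p) \succcurlyeq (H_q)$; ranging over $h\cdot p$ gives the minimality for all orbits via (i).

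The main obstacle I anticipate is item (i), specifically proving that the map $H \times_{H_q} N_q(H\cdot q) \to M$ is a \emph{global} diffeomorphism rather than just a local one near the zero section. The local statement is the Slice Theorem, but globally one must rule out that two different normal geodesics from the (totally geodesic) orbit meet, and that every point of $M$ is reached. The clean way is to use that in a Hadamard manifold the nearest-point projection onto a complete totally geodesic submanifold is well defined and smooth (convexity of the distance function), so each $p \in M$ has a unique foot point on $H \cdot q$ and a unique minimizing normal geodesic to it; equivariance of this construction under $H$ then yields the global diffeomorphism. I would also need the decomposition $H = (\Exp \m) H_q$, which is exactly the Cartan decomposition of the reductive algebraic group $H$ relative to its maximal compact $H_q$ — available once (iii), or rather its Lie-algebra version $\h = \h_0 \oplus \m$, is in hand. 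Everything else is bookkeeping with Lie triple systems and the identification $T_q M \cong \p$.
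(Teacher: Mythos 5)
Your argument hinges on the claim that total geodesy of the orbit forces the canonical embedding $\h=(\h\cap\k)\oplus\m$ with $\m=T_q(H\cdot q)\subseteq\p$, and this claim is false under the hypotheses of the proposition. The statement assumes only that $H$ is a \emph{connected subgroup} of $\Iso(M)$ with a totally geodesic orbit; it does not assume $H$ is reductive algebraic, and Theorem~\ref{ThOniVin} is not available here (invoking it, as you do for part~(iii), silently adds a hypothesis). Concretely: what total geodesy gives you is that $\m=\pr_\p(\h)$ is a Lie triple system with $\ker(\pr_\p|_\h)=\h\cap\k$, so the dimension count $\dim\h=\dim(\h\cap\k)+\dim\m$ is correct --- but the containment $\m\subseteq\h$ (equivalently $\theta(\h)=\h$), which you need before you can even write $[\m,\m]\subseteq\h$, does not follow. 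For a counterexample take $M=\HH^2\times\HH^2$ and $H=AN\times\{e\}$ with $AN\subset\Sl(2,\R)$ the upper-triangular Iwasawa subgroup: the orbit $H\cdot q=\HH^2\times\{\ast\}$ is totally geodesic, $\h\cap\k=0$, $\m=\p_1$, yet $\h=\a\oplus\mathfrak{n}\neq\p_1$. All three conclusions of the proposition still hold for this action, but your derivations of (iii) ($\theta|_\h$ a Cartan involution of $\h$) and of the decomposition $H=(\Exp\m)H_q$ used for (i) collapse, since both presuppose $\theta(\h)=\h$. A secondary slip in (i): one cannot ``use the $H_q$-action and convexity to move $X$ into $\m^\perp$''; the $H_q$-orbits in $\p$ need not meet $\m^\perp$, and the correct normalization is the foot-point projection onto the orbit, not the isotropy action.

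The fix is to abandon the Lie-algebraic route entirely and argue geometrically, which is what the paper does and what your final paragraph already gestures at. Since $M$ is Hadamard, $\exp_q\colon T_qM\to M$ is a global diffeomorphism and $H\cdot q=\exp_q(T_q(H\cdot q))$ is closed and totally convex; by Bishop--O'Neill (\cite[Lemma~3.1]{bon}) --- equivalently, by your observation that the nearest-point projection onto a closed totally geodesic submanifold of a Hadamard manifold is well defined --- the normal exponential map $\exp\colon N(H\cdot q)\to M$ is a diffeomorphism, and it is $H$-equivariant because each normal geodesic is the unique minimizer to the orbit. This proves~(i) with no reference to $\h$. Then (ii) is immediate: $H_p$ preserves $H\cdot q$ and fixes the unique foot point $x$ of $p$, so $H_p\subseteq H_x$ and $(H_p)\succcurlyeq(H_q)$. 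Finally (iii) follows from (ii) together with Cartan's fixed point theorem: any compact $Q\subseteq H$ fixes some $p\in M$, hence $Q\subseteq H_p$ is conjugate into $H_q$. This order of deduction ((i)$\Rightarrow$(ii)$\Rightarrow$(iii)) is the reverse of yours and requires no structure theory of $H$ at all.
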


\begin{proof}
To prove part~(i), we first show that the totally geodesic orbit $H \cdot q$ is
totally convex, i.e.\ each geodesic segment $\gamma$ is contained in $H \cdot
q$ whenever the endpoints of~$\gamma$ are contained in $H \cdot q$. Using the
isometric $H$-action, we may restrict ourselves to consider geodesic segments
starting at~$q$. The Riemannian exponential $\exp \colon T_qM \to M$ is a
diffeomorphism~\cite[Ch.~VI, Theorem~1.1(iii)]{helgason} and $H \cdot q$ is the
image of the linear subspace $T_q(H \cdot q) \subseteq T_qM$ under this
diffeomorphism. Thus any geodesic segment starting at~$q$ has its endpoint in
$H \cdot q$ if and only if the segment is completely contained in $H \cdot q$.
Moreover, it follows that $H \cdot q$ is a closed subset of~$M$. By
\cite[Lemma~3.1]{bon}, a submanifold $V$ of a complete Riemannian manifold~$M$
of non-positive curvature is closed and totally convex if and only if $V$ is
totally geodesic and the Riemannian exponential map $\exp \colon NV \to M$ is a
diffeomorphism. To show the equivariance property, it suffices to note that for
any normal vector $v \in N(H \cdot q)$, the geodesic segment parametrized by
$\exp( t v )$, $t \in [0,1]$, is the unique shortest geodesic segment joining
$\exp(v)$ and $H \cdot q$. Since $H$ acts by isometries, we have $h \cdot
\exp(v) = \exp( h \cdot v )$.

Part(ii) follows immediately from part~(i). Indeed, for each $p \in M$ there is
a unique $v \in N(H \cdot q)$ such that $\exp(v)=p$ and it follows that $H_p
\subseteq H_x$ where $x \in H \cdot q$ is the unique point such that $v \in
N_x(H \cdot q)$.

Let $Q \subseteq H$ be a compact subgroup. By Cartan's fixed point theorem, the
$H$-action on $M$ restricted to~$Q$ has a fixed point~$p \in M$. Then $Q
\subseteq H_p$ and it follows from~(ii) that $Q$ is conjugate to a subgroup of
$H_q$. This proves~(iii).
\end{proof}

From Proposition~\ref{PropKM}~(ii) it follows that all totally geodesic orbits
of~$H$ are of the same (minimal) orbit type.

\begin{examples}\label{RemCpt}\rm
We remark that a statement analogous to Theorem~\ref{ThOniVin} does not hold
for symmetric spaces of compact type.  In fact, there are many examples of
nontrivial actions of compact groups on compact symmetric spaces which do not
have any totally geodesic orbits at all. Note that a closed subgroup of a
compact Lie group is always a reductive algebraic subgroup.

{(i)} Consider a Hermann action of a closed subgroup~$H \subset G$ on a compact
irreducible symmetric space $G/K$, where the isometry group $G$ is simple. Let
$\sigma, \theta \colon \g \to \g$ be involutive automorphisms such that $\k =
\g^\sigma$, $\h= \g^\theta$ are the fixed point sets of~$\sigma$ and $\theta$,
respectively. It was shown in~\cite{hertgo} that the $H$-action on $G/K$ has a
totally geodesic orbit if and only if there is an element $g \in G$ such that
$\Ad(g) \circ \sigma \circ \Ad(g)^{-1}$ commutes with $\theta$.
Conlon~\cite{conlon} determined all pairs of involutions on simple compact Lie
groups where no such element~$g$ exists. For example, the action of $H=\Sp(n)$
on $G/K=\SU(2n)/\SUxU{2n-1}1$ does not have any totally geodesic orbit for $n
\ge 2$.

{(ii)} For a different type of example, note that actions on the sphere~$\eS^n$
which are induced by irreducible orthogonal representations on $\R^{n+1}$ do
not have any totally geodesic orbit, except $\eS^n$ itself in case the action
is transitive. Indeed, the totally geodesic submanifolds of~$\eS^n$ are
precisely the intersections of~$\eS^n$ with linear subspaces of $\R^{n+1}$ and
hence a totally geodesic orbit spans an invariant subspace of $\R^{n+1}$.

{(iii)} Even if there are totally geodesic orbits in the compact setting, there
might not be a decomposition as in (\ref{EqnCartanDec2}). For example, let
$H=\SU(m)$ act on $\R^{2m+1}$ such that a linear action of~$H$ is given by the
standard representation of~$H$ on $\R^{2m}=\C^m$ plus a one-dimensional trivial
module. Then $H$ acts on $\eS^{2m}$ in such a fashion that there is a totally
geodesic orbit $H \cdot q$ and $H_q \cong \SU(m-1)$. But $(\SU(m),\SU(m-1))$ is
not a symmetric pair~\cite{helgason}. This example also shows that in the
compact setting, the orbit type of totally geodesic orbits may not be unique:
Apart from the one principal totally geodesic orbit, there are also two fixed
points.
\end{examples}

\begin{example}\label{ExplNilp}\rm
Let us give a simple example of an action on the hyperbolic plane, where the
group which acts is an algebraic, but not reductive algebraic, subgroup of the
isometry group and where there is no totally geodesic orbit. Let $\HH^2 =
\left\{ z \in \C \mid \Im(z) > 0 \right\}$ be the upper half-plane endowed with
the Riemannian metric $\smash{\Im(z)}^{-2}dzd\bar z$. Consider the isometric
action of $\Sl(2,\R)$ given by the transformations
$$
\begin{pmatrix}
  a & b \\
  c & d \\
\end{pmatrix} \cdot z = \frac{az+b}{cz+d}.
$$
Consider the subgroup $H \subset \SL(2,\R)$ consisting of all matrices where
$a=d=1$ and $c=0$. This group is isomorphic to the additive group of~$\R$ and
it acts on $\HH^2$ by horizontal translations, hence the $H$-orbits are the
horospheres given by the horizontal lines $\Im(z) = \hbox{const}$.  None of
these orbits is totally geodesic, as the geodesics in $\HH^2$ are orthogonal
arcs to the real axis or straight vertical half-lines ending on the real axis.
The group $H$ is obviously an algebraic subgroup of~$\Sl(2,\R)$. Its Lie
algebra~$\h$ is not a reductive subalgebra of ${\mathfrak s \mathfrak
l}(2,\R)$, since $\ad z \colon \g(\C) \to \g(\C)$ is not semisimple for $z \in
\h(\C) \setminus \{0\}$. Note that this action is of cohomogeneity one, hence
hyperpolar.
\end{example}

There is the following criterion for an algebraic subalgebra of semisimple Lie
complex Lie algebra to be reductive.

\begin{proposition}
Let $\h \subseteq \g$ be an algebraic subalgebra of the semisimple complex Lie
algebra~$\g$. Then $\h$ is a reductive algebraic subalgebra if and only if the
restriction of the Killing form $B(x,y) := \tr (\ad x \circ \ad y)$ to $\h
\times \h$ is non-degenerate.
\end{proposition}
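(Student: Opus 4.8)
The plan is to prove the two implications separately: the forward one by reducing it, via the Karpelevich--Mostow result, to the fact that the Killing form is definite on a compact real form, and the converse by a direct trace computation.

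For the implication ``reductive $\Rightarrow$ non-degenerate'' I would apply Theorem~\ref{ThOniVin}, with $\g$ regarded as a real semisimple Lie algebra, to obtain a Cartan decomposition $\g = \k \oplus \p$ (with Cartan involution $\theta$) with respect to which $\h$ is canonically embedded. Since $\g$ is complex, here $\k$ is a compact real form of $\g$, $\p = i\k$, and the restriction $B|_{\k \times \k}$ is real-valued and negative definite. Because $\h$ is a complex subspace with $\theta(\h) = \h$, formula~(\ref{EqnCartanDec2}) together with $\p = i\k$ gives $\h = (\h \cap \k) \oplus i(\h \cap \k)$; thus $\h \cap \k$ is a real form of the complex vector space $\h$, and $B|_{\h \times \h}$ is nothing but the $\C$-bilinear extension of the form $B|_{(\h \cap \k) \times (\h \cap \k)}$. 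The latter, being a restriction of a negative definite form, is non-degenerate, and hence so is its $\C$-bilinear extension $B|_{\h \times \h}$.

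For the converse I would argue by contraposition: assuming $\h$ algebraic but not reductive, I exhibit a nonzero subspace of $\h$ lying in the kernel of $B|_{\h \times \h}$. Since $\h$ is not reductive, $\rad(\h)$ contains an element $z$ with $\ad z$ not semisimple; as the radical of an algebraic Lie algebra is again algebraic (Chevalley), it is stable under the Jordan decomposition in $\gl(\g)$, so the nilpotent part of $z$ is a nonzero $\ad$-nilpotent element of $\rad(\h)$. By Lie's theorem the set $\mathfrak n$ of all $\ad$-nilpotent elements of $\rad(\h)$ is a linear subspace (the common kernel of the weights for the action of $\rad(\h)$ on $\g$), and it is an ideal of $\h$, since the inner automorphisms $\exp(\ad Y)$, $Y \in \h$, preserve both $\rad(\h)$ and $\ad$-nilpotency. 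Thus $\mathfrak n$ is a nonzero ideal of $\h$ every element of which acts nilpotently on $\g$. By Clifford's theorem each composition factor $W$ of the $\h$-module $\g$ is semisimple as a $\mathfrak n$-module, and since $\mathfrak n$ acts on $W$ by nilpotent operators, Engel's theorem forces the $\mathfrak n$-action on $W$ to be trivial. Choosing a basis of $\g$ adapted to a composition series of the $\h$-module $\g$, the operators in $\ad(\h)$ become block upper triangular and those in $\ad(\mathfrak n)$ strictly block upper triangular, whence $B(x,y) = \tr(\ad x \circ \ad y) = 0$ for all $x \in \mathfrak n$ and $y \in \h$. Therefore the nonzero subspace $\mathfrak n$ is contained in the kernel of $B|_{\h \times \h}$, which is degenerate.

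The step I expect to be the main obstacle is the converse direction, and more precisely the extraction of a nonzero ideal of $\ad$-nilpotent elements from the bare failure of reductivity: this is exactly where the structure theory of algebraic Lie algebras is needed, namely Chevalley's theorem that the radical of an algebraic subalgebra of $\gl(\g)$ is itself algebraic (hence closed under Jordan decomposition), together with the Clifford--Engel argument showing that such an ideal is $B$-orthogonal to all of $\h$. Once Theorem~\ref{ThOniVin} is invoked, the forward direction is essentially the observation that the $\C$-bilinear extension of a definite form is non-degenerate, and presents no real difficulty.
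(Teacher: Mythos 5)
The paper gives no argument here at all --- its ``proof'' is the single line ``See \cite[Ch.~4, Theorem~2]{OV}'' --- so your proposal has to be judged on its own terms, and it does hold up as a correct proof. Two remarks on the forward direction. First, to invoke Theorem~\ref{ThOniVin} you must regard the complex $\g$ as a real Lie algebra, and the paper's notion of ``reductive algebraic'' for real $\g$ is defined via the complexification $\g\otimes_{\R}\C\cong\g\oplus\bar{\g}$; the (routine, but unstated) check that $\h\oplus\bar{\h}$ is then reductive algebraic is needed before the theorem applies. Second, and more seriously as a matter of logical economy: the canonical-embedding theorem for reductive algebraic subalgebras is, in the standard development (including \cite{OV}), proved \emph{using} non-degeneracy of the trace form, so deriving the present criterion from Theorem~\ref{ThOniVin} is non-circular only because this paper imports that theorem as an unproved black box; it would not serve as an independent proof of the proposition. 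Granting that, the computation itself is fine: $\k$ is a compact real form, $\p=i\k$, $\h\cap\k$ is a real form of the complex space $\h$ on which $B$ is negative definite, and the $\C$-bilinear extension of a non-degenerate real form is non-degenerate. The converse is the substantial part and is correct and genuinely independent of the citation: Chevalley's theorem that the radical of an algebraic subalgebra is algebraic (hence stable under Jordan decomposition in $\gl(\g)$) produces a nonzero $\ad$-nilpotent element of $\rad(\h)$; the set $\mathfrak{n}$ of all such elements is a subspace by your Lie's-theorem/weight argument and an ideal by the automorphism argument; and the Clifford--Engel step (which can be shortened: for each composition factor $W$ of the $\h$-module $\g$, the invariant subspace $W^{\mathfrak{n}}$ is nonzero by Engel and is an $\h$-submodule because $\mathfrak{n}$ is an ideal, hence equals $W$) makes $\ad\mathfrak{n}$ strictly block upper triangular in a basis adapted to a composition series, so $\mathfrak{n}$ lies in the radical of $B|_{\h\times\h}$. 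In short: correct, essentially the textbook Cartan-type argument, with the one caveat that the forward half leans on Theorem~\ref{ThOniVin} rather than being self-contained.
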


\begin{proof}
See \cite[Ch.~4, Theorem~2]{OV}.
\end{proof}

\begin{example}\label{ExplEuclHyp}\rm
The following is a generalization of Example~\ref{ExplNilp}. Consider the upper
half space $\smash\HH^n = \{ (x_1, \ldots, x_n) \in \R^n \mid x_n > 0 \}$
endowed with the Riemannian metric $x_n^{-2}(dx_1^2 + \ldots + dx_n^2)$. Let $U
\subseteq \R^{n-1}$ be a linear subspace and let $p \in \HH^n$. Then the
additive group $U$ acts effectively on $\HH^n$ such that the orbit through a
point $p \in \HH^n$ is given by $p + (U \times \{0\})$. This action has no
totally geodesic orbit and the subgroup of $\Iso(\HH^n)$
given by the $U$-action is not reductive algebraic unless $U = \{0\}$. Let
$U^\perp \subseteq \R^{n-1}$ be the orthogonal complement of~$U$ in~$\R^{n-1}$
with respect to the standard scalar product on $\R^{n-1}$. Then the subspace $
\Sigma := \{ (v, y) \in \R^n \mid v \in U^\perp,\, y > 0 \} $ is a section for
the $U$-action on $\HH^n$ and we see that the $U$-action on $\HH^n$ is polar.
This is an example of a polar homogeneous foliation on $\HH^n$, since all
points of $\HH^n$ lie in a principal orbit of the $U$-action.
\end{example}

\begin{example}\label{ExplEHP}\rm
Using the same notation as in Example~\ref{ExplEuclHyp}, let $\varrho \colon L
\to \O(U^\perp)$ be a polar representation of the Lie group~$L$ and let the
linear subspace $\Sigma_0 \subseteq U^\perp$ be a section. Then $\Sigma := \{
(v, y) \in \R^n \mid v \in \Sigma_0,\, y > 0 \}$ is a section for the action of
$U \times L$ on $\HH^n$ given by $(u,\ell) \cdot (v+w,y) := (v+u +
\varrho(\ell)w, y)$ for $(u,\ell) \in U \times L$, $v \in U$, $w \in U^\perp$,
$y > 0$. In the special case where $\varrho$ is a trivial representation, this
is Example~\ref{ExplEuclHyp}. Obviously, this action has no totally geodesic
orbits, unless $U=\{0\}$.
\end{example}


\section{Dual actions}
\label{dual}


Let $\g$ be a semi-simple real Lie algebra all of whose simple ideals are
noncompact and let $\g = \k \oplus \p$ be a Cartan decomposition. Then the Lie
algebra~$\g^*$, defined by $\g^* := \k \oplus i \p \subset \g(\C)$ is a compact
real form of $\g(\C)$. We may define a map $\psi \colon \g \to \g^*$ by $X + Y
\mapsto X + i Y$ for $X \in \k$, $Y \in \p$ as in~(\ref{EqnDualityMap}).
Obviously, $\psi$ is a bijective $\R$-linear map, but not a homomorphism of Lie
algebras. If $\h \subseteq \g$ is a reductive algebraic subalgebra, it is
possible to apply the duality of symmetric spaces to the $H$-action on~$M$.
First note that we may assume, by replacing $H$ with a suitable conjugate
subgroup, that the point $q$ as given in Remark~\ref{RemKM} agrees with $[e] =
eK$. It follows that
\begin{equation}\label{EqnDualDef}
\h^* := \psi(\h) \subseteq \g^*
\end{equation}
is a subalgebra and $\h \cap i\p \subseteq \g^*$ is a Lie triple system. Now
let $G^*$ be some compact Lie group with Lie algebra $\g^*$ and let $K^*$ be
the connected subgroup of~$G^*$ corresponding to the subalgebra $\k \subseteq
\g^*$. Let $H^*$ be the connected subgroup of~$G^*$ corresponding to $\h^*$.
Then we say that the $H^*$-action on $G^*/K^*$ is {\em dual} to the $H$-action
on~$G/K$. It follows that $H^* \subseteq G^*$ is a reductive algebraic subgroup
of~$G^*$ and hence compact.

\begin{theorem}\label{ThDualAct}
Let $M$ be a symmetric space of non-positive curvature without flat factors.
Let $H$ be a connected reductive algebraic subgroup of the isometry group
of~$M$. Let $M^*$ be a compact symmetric space dual to~$M$ and let $H^*$ be a
subgroup of the isometry group of~$M^*$ such that the $H^*$-action on $M^*$ is
dual to the $H$-action on $M$. Then there exist points $q \in M$ and $q^* \in
M^*$ such that the following are true.
\begin{enumerate}

\item The $H$-orbit through $q$ is of minimal orbit type.

\item The orbits $H \cdot q \subseteq M$ and $H^* \cdot q^* \subseteq M^*$
    are totally geodesic.

\item The symmetric space $H^* \cdot q^*$ is dual to the symmetric space $H
    \cdot q$

\item The isotropy subgroups $H_q \subseteq H$ and $H^*_{q^*} \subseteq
    H^*$ are locally isomorphic.

\item The slice representations of~$H_q$ and $H^*_{q^*}$ are equivalent on
    the Lie algebra level. In particular, the $H$-action on $M$ and the
    $H^*$-action on~$M^*$ have the same cohomogeneity.

\end{enumerate}
\end{theorem}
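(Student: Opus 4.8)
The plan is to start by choosing the reference point $q \in M$ supplied by Remark~\ref{RemKM}: after replacing $H$ by a conjugate, $\h$ is canonically embedded with respect to the Cartan decomposition $\g = \k \oplus \p$ with $K = G_q$, so $\h = (\h \cap \k) \oplus (\h \cap \p)$ and $\h \cap \p$ is a Lie triple system. This immediately gives that $H \cdot q$ is totally geodesic, and by Proposition~\ref{PropKM}~(ii) its orbit type $(H_q)$ is minimal, establishing~(i) and the first half of~(ii). For the dual side, one uses the very definition of the dual action: $\h^* = \psi(\h) = (\h\cap\k) \oplus i(\h\cap\p) \subseteq \g^* = \k \oplus i\p$, which is canonically embedded with respect to the Cartan-type decomposition $\g^* = \k \oplus i\p$; hence $\h^* \cap i\p$ is a Lie triple system in $\g^*$ and the orbit $H^* \cdot q^*$ through the base point $q^* = [e]K^*$ is totally geodesic, finishing~(ii).

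Next I would identify the tangent spaces of the two orbits. The tangent space $T_q(H\cdot q)$ is canonically identified with $\h \cap \p$, and $T_{q^*}(H^* \cdot q^*)$ with $\h^* \cap i\p = i(\h \cap \p) = \psi(\h\cap\p)$. A totally geodesic submanifold of a symmetric space through a point is itself a symmetric space whose tangent Lie triple system is $\h\cap\p$ (resp. $\h^*\cap i\p$), with the local isometry type determined by the pair $([\h\cap\p,\h\cap\p] \oplus (\h\cap\p),\ \h\cap\p)$. Since $[\h\cap\p,\h\cap\p] \subseteq \h\cap\k$ and the bracket relations among $\h\cap\k$, $\h\cap\p$ transform under $\psi$ exactly by the compact-dual recipe (the $\k$--$\k$ and $\k$--$\p$ brackets are preserved, the $\p$--$\p$ bracket picks up the sign $i^2 = -1$), the orbit $H^*\cdot q^*$ is, as a symmetric space, the compact dual of $H\cdot q$ in the sense defined in Section~\ref{Prelim}. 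This proves~(iii). Part~(iv) then follows because the isotropy subgroup $H_q$ has Lie algebra $\h \cap \k$ — indeed $H_q$ is a maximal compact subgroup of $H$ by Proposition~\ref{PropKM}~(iii) — while $H^*_{q^*}$ has Lie algebra $\h^* \cap \k = \h \cap \k$, the \emph{same} subalgebra; hence $H_q$ and $H^*_{q^*}$ are locally isomorphic (both having $\h\cap\k$ as Lie algebra, possibly differing only by the component group and discrete central factors, which is why the statement only claims local isomorphism).

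For~(v) I would compare the slice representations directly on the Lie algebra level. The normal space $N_q(H\cdot q)$ is the orthogonal complement of $\h \cap \p$ inside $\p = T_qM$, with $H_q$ acting via $\ad$; on the dual side $N_{q^*}(H^*\cdot q^*)$ is the orthogonal complement of $i(\h\cap\p)$ inside $i\p$, with the $H^*_{q^*}$-action again via $\ad$. The key point is that $\psi$ restricts to an $\R$-linear isomorphism $\p \to i\p$ intertwining the $\ad$-action of $\h\cap\k$: for $X \in \h\cap\k$ and $Y \in \p$ one has $\psi([X,Y]) = [X, iY] = [X, \psi(Y)]$ because $[\k,\p]\subseteq\p$ and $\psi$ is multiplication by $i$ on $\p$. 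Hence $\psi$ maps $\h\cap\p$ onto $\h^*\cap i\p$ and the orthogonal complements onto each other (the dual metric $\beta^*$ being defined so that $\psi$ is an isometry up to the obvious identification), and it conjugates the slice representation of the identity component $(H_q)_0$ into that of $(H^*_{q^*})_0$. Since slice representations of a Lie group and of its identity component have the same orbits and cohomogeneity, the two slice representations are equivalent as asserted, and in particular the cohomogeneities of the two actions agree, since the cohomogeneity equals the cohomogeneity of the slice representation at a point on a minimal orbit.

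The main obstacle I anticipate is in part~(iv)/(v): one must be careful that the groups $H_q$ and $H^*_{q^*}$ — and correspondingly the slice representations — need only agree at the Lie algebra level, not as Lie groups, because $G^*$ is only assumed to have Lie algebra $\g^*$ (the paper explicitly allows non-simply-connected $M^*$), so the discrete data (component groups, central subgroups) may genuinely differ. This forces the statement to be phrased as "locally isomorphic" and "equivalent on the Lie algebra level," and the proof must resist the temptation to assert more. Everything else is a matter of tracking the bracket relations through the explicit map $\psi$ and invoking the standard fact that a totally geodesic submanifold through $q$ corresponds bijectively to a Lie triple system in $\p$.
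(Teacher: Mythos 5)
Your proof is correct and follows essentially the same route as the paper: canonical embedding via Remark~\ref{RemKM} and Proposition~\ref{PropKM} gives (i) and (ii), and the explicit map $\psi$ gives (iii)--(v), which the paper simply declares ``obvious from the construction,'' noting only that both slice representations are the action of $\h\cap\k$ on the orthogonal complements of $\h\cap\p$ in $\p$ and of $\h^*\cap i\p$ in $i\p$. Your explicit verification that $\psi$ intertwines the $\ad(\h\cap\k)$-actions and preserves orthogonality for the dual metric, and your care about the local-versus-global distinction in (iv)--(v), are a faithful expansion of that same argument.
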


\begin{proof}
Assume the orbit $H \cdot q$ is as described in Remark~\ref{RemKM}. Part~(i)
was shown in Proposition~\ref{PropKM}. We may assume without limitation of
generality that $q = [e]$ and $q^* =[e^*] = K^*$. We have
\begin{equation}\label{EqnDualCan}
\h^* = (\h^* \cap \k) \oplus (\h^* \cap i\p).
\end{equation}
It follows that $\h^* \cap i\p \subseteq \g^*$ is a Lie triple system and it is
easy to see that $H^*$-orbit through $[e^*]$ coincides with the totally
geodesic exponential image of $\h^* \cap i\p$. Parts~(iii), (iv) and (v) are
obvious from the construction of the dual action. (Note that the slice
representations of~$H_q$ and $H^*_{q^*}$ are given -- on the Lie algebra level
-- by the action of $\h \cap \k$ on the orthogonal complement of $\h \cap \p$
in~$\p$ and on the orthogonal complement of $\h^* \cap i\p$ in~$i\p$,
respectively, and are thus obviously equivalent.)
\end{proof}

\begin{example}\label{ExplHS}\rm
To illustrate the concept, let us describe the dual actions for all connected
reductive algebraic subgroups in the isometry group of the hyperbolic plane.
The hyperbolic plane $M = \HH^2$ and the two-sphere $M^* = \eS^2$ are symmetric
spaces dual to each other. Consider the presentation $\HH^2 = \Sl(2,\R)/\SO(2)$
corresponding to the isometric action of $G = \Sl(2,\R)$ on the upper half
plane as described in Example~\ref{ExplNilp}, where $K = \SO(2)$ is the
stabilizer of the imaginary unit~$i$. Identifying $\eS^2$ with the unit sphere
in $\R^3$, let $G^* = \SO(3)$ and let $K^*$ be the stabilizer of the first
canonical basis vector~$e_1$ of~$\R^3$ under the standard $\SO(3)$-action. We
make the choices  $q = i \in \HH^2$ and $q^* = e_1 \in \eS^2$ for the points
$q,q^*$ as in Theorem~\ref{ThDualAct}. Assume $H \subseteq G$ is a connected
reductive algebraic subgroup. If $H$ is nontrivial, then either $H=G$ or $H$ is
one-dimensional. If $H=G$ then the $H$-action on~$M$ is dual to the
$\SO(3)$-action on $\eS^2$. If $H$ is one-dimensional, we may assume that $\h
\subset \g$ is canonically embedded. This means either $\h = \k$ or $\h \subset
\p$ holds. In the first case we have $H=K$. Then the $H$-action has $i$ as a
fixed point and is dual to the $K^*$-action on $\eS^2$, which has $e_1$ as a
fixed point. In the latter case, since $\HH^2$ is isotropic, we may assume that
$H$ is given by the matrices $b=c=0$ and $ad=1$ with the notation as in
Example~\ref{ExplNilp}; its orbits are the rays in the upper half plane
emanating from~$0$. The totally geodesic orbit $H \cdot i$ is the vertical ray
and $H$ is the group which consists of all transvections along this geodesic. A
dual action on $\eS^2$ is given by choosing $H^*$ as any group of rotations
conjugate to $K^*$ such that the orbit through $q^*$ is a great circle.
Finally, the trivial action on $\eS^2$ is obviously dual to the trivial action
on $\HH^2$.
\end{example}

The example above shows in particular that the action of $\SO(2)$ on the
two-sphere by rotations is dual to two different actions on the hyperbolic
plane. In Section~\ref{Reverse} we will consider another example of an action
on a compact symmetric space which is dual to several different actions.

\begin{remark}\rm
It should be noted that any compact subgroup of a semisimple Lie group is a
reductive algebraic subgroup. Hence the condition that a subgroup $H \subseteq
\Iso(M)$ is reductive algebraic is necessary for the existence of a dual action
of a compact group $H^*$ on $M^*$.
\end{remark}


\section{Polar actions and duality}
\label{PolarDual}


\begin{theorem}\label{ThPolarDual}
Let $M$ be a symmetric space of non-positive curvature without flat factors.
Let $H$ be a connected reductive algebraic subgroup of the isometry group
of~$M$. Let $M^*$ be a compact dual of~$M$ endowed with the dual Riemannian
metric and let $H^*$ be a subgroup of the isometry group of~$M^*$ such that the
$H^*$-action on $M^*$ is dual to the $H$-action on $M$. Then the $H$-action on
$M$ is polar if and only if the $H^*$-action on $M^*$ is polar. In this case, a
section $\Sigma^*$ of the $H^*$-action on $M^*$ is a symmetric space dual to a
section $\Sigma$ of the $H$-action on $M$.  In particular, the $H$-action on
$M$ is hyperpolar if and only if the $H^*$-action on $M^*$ is hyperpolar.
\end{theorem}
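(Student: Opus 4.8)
The plan is to establish the two directions of the polarity equivalence separately, using the structure already extracted in Theorem~\ref{ThDualAct}. The key observation is that both actions have a distinguished totally geodesic orbit (through $q$ and $q^*$ respectively), so we can try to build a section out of a normal space to this orbit together with a section of the slice representation. Concretely, at the point $q = [e]$ the normal space to $H \cdot q$ in $M$ is $\exp_q(\m)$ where $\m \subseteq \p$ is the orthogonal complement of $\h \cap \p$; the slice representation is the action of $\h \cap \k$ on $\m$, and by Theorem~\ref{ThDualAct}(v) the dual situation has the orthogonally equivalent slice representation of $\h^* \cap \k$ on $i\m$.

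First I would reduce the problem to the slice representation. Since by Proposition~\ref{PropKM}(i) the manifold $M$ is equivariantly diffeomorphic to the normal bundle of the totally geodesic orbit $H\cdot q$, and the exponential map $\exp_q \colon \p \to M$ is a diffeomorphism carrying $\m$ onto a totally geodesic submanifold $\bar M$ meeting $H \cdot q$ orthogonally, one checks that the $H$-action on $M$ is polar if and only if the slice representation of $H_q$ on $N_q(H \cdot q) = \m$ is polar, and in that case a section $\Sigma$ is obtained as $\exp_q(\Sigma_0)$ for $\Sigma_0 \subseteq \m$ a section of the slice representation (one must verify that $\exp_q(\Sigma_0)$ meets all $H$-orbits, which follows from Proposition~\ref{PropKM}(i) since every point of $M$ lies on a normal geodesic from $H \cdot q$, and that it remains orthogonal to the orbits, which uses that $\bar M = \exp_q(\m)$ is totally geodesic and that the orbit meets it orthogonally). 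On the compact side, the corresponding totally geodesic submanifold $\bar M^* = \exp_{q^*}(i\m)$ need not be all of a "normal bundle" — indeed the $H^*$-action may have no global product structure — but one still has that $\bar M^*$ meets every $H^*$-orbit (because $H^* \cdot q^*$ is a totally geodesic orbit of minimal type and, by the Slice Theorem applied along it together with the fact that $H^*$ is compact so every orbit has a point nearest to $H^* \cdot q^*$ reached along a normal geodesic), and that the $H^*$-action is polar iff its slice representation at $q^*$ is polar, the section again being $\exp_{q^*}(\Sigma_0^*)$.

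Combining these two reductions with Theorem~\ref{ThDualAct}(v) closes the argument: the slice representation of $H_q$ on $\m$ and that of $H^*_{q^*}$ on $i\m$ are equivalent as linear actions, hence one is polar iff the other is, and a section $\Sigma_0$ of the first corresponds under the linear identification $Y \mapsto iY$ to a section $\Sigma_0^*$ of the second. Since $\exp_q$ and $\exp_{q^*}$ carry linear subspaces of $\p$ (resp.\ $i\p$) to totally geodesic submanifolds, and the dual metric is by definition the one pulled back along $Y \mapsto iY$, the section $\Sigma^* = \exp_{q^*}(\Sigma_0^*)$ is a totally geodesic submanifold isometric, up to this correspondence, to $\Sigma = \exp_q(\Sigma_0)$; more precisely $\Sigma$ is a flat-or-symmetric totally geodesic submanifold of $M$ and $\Sigma^*$ is the dual symmetric space in the sense defined in Section~\ref{Prelim}. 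The flatness statement (hyperpolarity) then follows immediately, since a totally geodesic submanifold of $M$ is flat iff its dual is flat — both correspond to the condition that $\Sigma_0$ (equivalently $\Sigma_0^*$) is an abelian subspace, i.e.\ a Lie triple system on which the curvature tensor vanishes, and this condition is visibly symmetric under $Y \mapsto iY$.

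**The main obstacle** I anticipate is the asymmetry between the two sides in the first reduction step: on $M$ (noncompact) Proposition~\ref{PropKM} gives a clean global normal-bundle picture, but on $M^*$ (compact) there is no such global structure, and one must argue more carefully — using compactness of $H^*$ and Cartan-type nearest-point arguments, or a direct slice-theorem argument along the totally geodesic orbit of minimal type — that polarity of the $H^*$-action is still detected by the slice representation at $q^*$ and that $\exp_{q^*}(i\m)$ genuinely meets every orbit. A secondary technical point to handle with care is verifying that $\exp_q(\Sigma_0)$ (and its dual) actually meets \emph{all} orbits of the full $H$-action and not merely those of $H_q$; this is where one invokes that every $H$-orbit passes through a point of the totally convex totally geodesic orbit's normal bundle, reducing meeting-all-$H$-orbits to the fact that $\Sigma_0$ meets all $H_q$-orbits in the slice.
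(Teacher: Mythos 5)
There is a genuine gap, and it sits at the heart of your reduction. You claim that the $H$-action on $M$ is polar if and only if the slice representation of $H_q$ on $\m = N_q(H\cdot q)$ is polar, and likewise on the compact side. Only one implication of this is true (slice representations of polar actions are polar, by Palais--Terng); the converse fails. The reason is that a section $\Sigma_0 \subseteq \m$ of the slice representation need not be a Lie triple system of $\p$, so $\exp_q(\Sigma_0)$ need not be totally geodesic, and then neither Lemma~\ref{LmOrthKill} nor any other device lets you propagate orthogonality from $q$ to the rest of $\exp_q(\Sigma_0)$; indeed orthogonality genuinely fails away from $q$. (The same objection applies to your claim that $\bar M = \exp_q(\m)$ is totally geodesic: the orthogonal complement of a Lie triple system is in general not a Lie triple system.) A concrete counterexample: let $M = \SL(3,\R)/\SO(3)$ and let $H \cong \R$ be the group of transvections along a regular geodesic, i.e.\ $\h = \R X$ with $X = \diag(a,b,c) \in \p$ regular. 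This is a reductive algebraic subgroup whose totally geodesic orbit $H \cdot q$ is a geodesic, $H_q$ is trivial, so the slice representation is trivial, hence polar; but the action is not polar, since a section through $q$ would have to be totally geodesic with tangent space the full normal space $X^\perp \cap \p$, which is not a Lie triple system (for $Y = E_{13}+E_{31}$, $Z = E_{23}+E_{32}$, $W = E_{12}+E_{21}$ one computes $[[Y,Z],W] = 2\,\diag(1,-1,0) \not\perp X$).

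The repair is essentially what the paper does: do not manufacture a section from the slice representation, but start from an actual section $\Sigma \ni q$ of the polar action, whose tangent space $\s = T_q\Sigma$ is automatically a Lie triple system because sections are totally geodesic; then $i\s$ is again a Lie triple system and $\Sigma^* = \exp_{q^*}(i\s)$ is totally geodesic. Orthogonality of $\Sigma^*$ to all $H^*$-orbits is then checked by splitting each Killing field as $X = X'+X''$ with $X'$ coming from $\h^*\cap\k$ (vanishing at $q^*$, handled by Proposition~\ref{PropInfCrit}, i.e.\ by the polarity of the isotropy action on the tangent space) and $X''$ coming from $\h^*\cap i\p$ (with $\nabla X''(q^*)=0$ and $X''(q^*) \perp i\s$, handled by Lemma~\ref{LmOrthKill}). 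Your proposal contains the first half of this argument (the $X'$ part), but the passage through the slice representation discards exactly the information carried by the $X''$ part and by the Lie-triple-system property of $\s$.
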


We will prove this theorem at the end of this section.  The following is a
useful observation.

\begin{lemma}\label{LmOrthKill}
Let $M$ be a Riemannian manifold and let $\Sigma$ be a connected totally
geodesic submanifold of~$M$. Let $p\in\Sigma$ and let $X$ be a Killing vector
field. Then $X(q) \in N_q\Sigma$ holds for all $q \in \Sigma$ if and only if
$X(p) \in N_p\Sigma$ and $\nabla_v X \in N_p \Sigma$ for all $v \in T_p\Sigma$.
\end{lemma}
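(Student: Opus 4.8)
The plan is to reduce everything to a statement about the Jacobi-type ODE satisfied by the restriction of a Killing field along a geodesic. The key fact I would use is that a totally geodesic submanifold $\Sigma$ has the property that every geodesic of $M$ with initial point in $\Sigma$ and initial velocity tangent to $\Sigma$ stays in $\Sigma$ for all time, so $\Sigma$ is foliated (near $p$) by such geodesics through $p$; conversely the normal space $N_q\Sigma$ at a nearby point $q\in\Sigma$ is obtained by parallel transport along the connecting geodesic of $N_p\Sigma$, again because $\Sigma$ is totally geodesic (the second fundamental form vanishes, so the normal bundle is parallel along $\Sigma$ with respect to the induced connection, which here agrees with the ambient connection). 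Thus it suffices to prove: for a geodesic $\gamma$ with $\gamma(0)=p$, $\dot\gamma(0)=v\in T_p\Sigma$, the vector field $t\mapsto X(\gamma(t))$ lies in the parallel-transported copy of $N_p\Sigma$ for all $t$ if and only if $X(p)\in N_p\Sigma$ and $\nabla_v X\in N_p\Sigma$.

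The main step is the standard identity that along any geodesic $\gamma$ a Killing field $X$ satisfies the Jacobi equation
\begin{equation}\label{EqnJacobiKilling}
\nabla_{\dot\gamma}\nabla_{\dot\gamma} X + R(X,\dot\gamma)\dot\gamma = 0,
\end{equation}
so $Y(t):=X(\gamma(t))$ is a Jacobi field, hence uniquely determined by the initial data $Y(0)=X(p)$ and $Y'(0)=\nabla_v X$. Now decompose $Y(t)=Y^\top(t)+Y^\perp(t)$ into components tangent and normal to $\Sigma$ along $\gamma$ (well-defined since $\gamma\subseteq\Sigma$). Because $\Sigma$ is totally geodesic, the splitting $T_{\gamma(t)}M = T_{\gamma(t)}\Sigma\oplus N_{\gamma(t)}\Sigma$ is parallel along $\gamma$, and the curvature operator $R(\cdot,\dot\gamma)\dot\gamma$ preserves each summand: for the tangent summand this is the Gauss equation with vanishing second fundamental form, and for the normal summand one uses that $\Sigma$ totally geodesic forces the mixed curvature terms to vanish as well. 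Consequently both $Y^\top$ and $Y^\perp$ separately satisfy the Jacobi equation~\eqref{EqnJacobiKilling} along $\gamma$. If $X(p)\in N_p\Sigma$ and $\nabla_v X\in N_p\Sigma$, then $Y^\top$ is the Jacobi field with zero initial data, hence $Y^\top\equiv 0$, i.e.\ $Y(t)\in N_{\gamma(t)}\Sigma$ for all $t$; letting $v$ range over a basis of $T_p\Sigma$ and using that $\Sigma$ is covered near $p$ by such geodesics gives $X(q)\in N_q\Sigma$ for all $q\in\Sigma$ in a neighborhood of $p$, and connectedness of $\Sigma$ propagates this. The converse is immediate: if $X(q)\in N_q\Sigma$ for all $q\in\Sigma$ then in particular $X(p)\in N_p\Sigma$, and differentiating the relation $g(X,W)=0$ for an arbitrary tangent vector field $W$ along $\Sigma$ in the direction $v\in T_p\Sigma$ gives $g(\nabla_v X,W)+g(X,\nabla_v W)=0$; since $\Sigma$ is totally geodesic, $\nabla_v W\in T_p\Sigma$, and $X(p)\perp T_p\Sigma$ kills the second term, so $\nabla_v X\perp W$, i.e.\ $\nabla_v X\in N_p\Sigma$.

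The only point requiring a little care — and the step I expect to be the main obstacle — is justifying that the curvature operator $w\mapsto R(w,\dot\gamma)\dot\gamma$ really preserves both $T\Sigma$ and $N\Sigma$ along a geodesic tangent to the totally geodesic submanifold $\Sigma$; equivalently, that $g(R(u,\dot\gamma)\dot\gamma, n)=0$ whenever $u\in T_{\gamma(t)}\Sigma$ and $n\in N_{\gamma(t)}\Sigma$. This follows from the Codazzi equation together with the vanishing of the second fundamental form and its covariant derivative for a totally geodesic $\Sigma$, but one should state it cleanly rather than assume it. Everything else is the routine theory of Jacobi fields and of Killing fields along geodesics, together with Proposition~\ref{PropKM}-style remarks that $\Sigma$ is covered by geodesics through $p$ with tangent initial velocity.
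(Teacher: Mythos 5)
Your argument is correct, but note that the paper itself does not prove Lemma~\ref{LmOrthKill}: it simply refers to \cite[Lemma~5]{dk}, so there is no in-text proof here to compare against. Your route --- restrict $X$ to a geodesic $\gamma$ of $M$ tangent to $\Sigma$, use that Killing fields restrict to Jacobi fields, and split the Jacobi equation according to the parallel decomposition of $T_{\gamma(t)}M$ into $T_{\gamma(t)}\Sigma\oplus N_{\gamma(t)}\Sigma$ --- is the standard one, and the ingredients all check out: the Codazzi equation with vanishing second fundamental form gives $R(u,\dot\gamma)\dot\gamma\in T\Sigma$ for $u\in T\Sigma$, and the pair symmetry $g(R(n,\dot\gamma)\dot\gamma,u)=g(R(u,\dot\gamma)\dot\gamma,n)$ then settles the normal summand, so both projections of $X\circ\gamma$ solve the Jacobi equation and the tangential one vanishes identically once its initial data vanish. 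The one place you should be more explicit is the phrase ``connectedness of $\Sigma$ propagates this'': the geodesics of $\Sigma$ emanating from $p$ need not reach every point of $\Sigma$, so you need an open-and-closed argument on the set $A$ of points $q\in\Sigma$ at which both $X(q)\in N_q\Sigma$ and $\nabla_wX\in N_q\Sigma$ for all $w\in T_q\Sigma$. Closedness is clear; for openness, your Jacobi argument shows $X\perp\Sigma$ on a whole normal neighborhood $U$ of any $q\in A$, and then your own ``only if'' computation (differentiating $g(X,W)=0$ on the open set $U$) regenerates the derivative condition at every point of $U$, so $U\subseteq A$. With that small addition the proof is complete and self-contained.
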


\begin{proof}
See \cite[Lemma 5]{dk}.
\end{proof}

\begin{proposition}\label{PropInfCrit}
Let $M$ be a connected Riemannian manifold and let $p \in M$. Let $\s$ be a
linear subspace of $T_pM$ such that the exponential image $\Sigma :=
\exp_p(\s)$ is a totally geodesic submanifold of~$M$. Let $H$ be a connected
closed subgroup of the isotropy group $\Iso(M)_p$. Let $\varrho \colon H \to
\O(T_pM)$ be the orthogonal representation of~$H$ on $T_pM$ where we define
$\varrho(g) \colon T_pM \to T_pM$ to be the differential at~$p$ of the map $x
\mapsto g \cdot x$ for each $g \in H$. Then the following are equivalent:
\begin{enumerate}

\item The submanifold~$\Sigma \subseteq M$ intersects the $H$-orbits
    orthogonally.

\item The linear subspace $\s \subseteq T_pM$ intersects the orbits of
    $\varrho(H)$ orthogonally.

\end{enumerate}
\end{proposition}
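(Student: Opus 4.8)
The plan is to reduce the global orthogonality condition at an arbitrary point of $\Sigma$ to a linear-algebraic condition at the single point $p$, using the fact that $H$ fixes $p$ and acts linearly on $T_pM$, together with Lemma \ref{LmOrthKill}. First I would set up notation: for each $X \in \h$ let $X^*$ denote the Killing vector field on $M$ induced by the one-parameter subgroup $\exp(tX) \subseteq H$; then $X^*(p) = 0$ since $H \subseteq \Iso(M)_p$, and the endomorphism $v \mapsto \nabla_v X^*$ of $T_pM$ coincides (up to sign, by the standard identification) with the skew-symmetric endomorphism $d\varrho_e(X) \in \so(T_pM)$ that is the differential of the isotropy representation. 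The tangent space to the $H$-orbit through a point $q \in \Sigma$ is $T_q(H\cdot q) = \{ X^*(q) : X \in \h\}$, so statement (i) says exactly that $X^*(q) \in N_q\Sigma$ for all $X \in \h$ and all $q \in \Sigma$.

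Next I would apply Lemma \ref{LmOrthKill} to each Killing field $X^*$, with the submanifold $\Sigma$ and the basepoint $p$. The lemma says $X^*(q) \in N_q\Sigma$ for all $q \in \Sigma$ if and only if $X^*(p) \in N_p\Sigma$ and $\nabla_v X^* \in N_p\Sigma$ for all $v \in T_p\Sigma = \s$. The first condition $X^*(p) \in N_p\Sigma$ is automatic since $X^*(p) = 0$. So (i) is equivalent to: for every $X \in \h$ and every $v \in \s$, one has $\nabla_v X^* \in N_p\Sigma$, i.e.\ $d\varrho_e(X)\,v \perp \s$. On the other hand, the orbit of $\varrho(H)$ through a point $v \in \s$ has tangent space $\{ d\varrho_e(X)\,v : X \in \h\}$, so statement (ii) — that $\s$ meets the orbits of $\varrho(H)$ orthogonally — says precisely that $d\varrho_e(X)\,v \perp \s$ for all $X \in \h$ and all $v \in \s$. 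Hence (i) $\Leftrightarrow$ (ii), once one checks that it suffices to test orthogonality at points $v \in \s$ rather than at all points of the orbit, which follows because $\varrho(H)$ acts by isometries preserving (the orbit structure but not $\s$; still, orthogonality of $\s$ to $\varrho(H)$-orbits is by definition the pointwise condition at points of $\s$).

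The main obstacle, and the only point requiring care, is the clean identification of the covariant derivative $\nabla_v X^*$ at $p$ with the infinitesimal isotropy action $d\varrho_e(X)$: one must verify, e.g.\ by differentiating the defining relation $\varrho(\exp tX) = d(\exp tX)_p$ and using that $X^*$ vanishes at the fixed point $p$, that $\nabla_v X^* = \tfrac{d}{dt}\big|_{t=0}\varrho(\exp tX)\,v$ up to the usual sign, and in particular that this endomorphism is skew-symmetric so that ``$\nabla_v X^* \perp \s$ for all $v \in \s$'' is a symmetric condition in the two slots. Given Lemma \ref{LmOrthKill}, everything else is a direct translation between the Killing-field description of orbit tangent spaces on $M$ and the linear description of orbit tangent spaces in $T_pM$, and no further geometry of $\Sigma$ beyond its being totally geodesic (which is what makes the lemma applicable) is needed.
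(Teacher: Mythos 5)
Your proposal is correct and follows essentially the same route as the paper's proof: apply Lemma~\ref{LmOrthKill} to each Killing field induced by~$\h$, observe that $X(p)=0$ because $H$ fixes~$p$, and identify $\nabla_vX$ at~$p$ with the infinitesimal isotropy action $\frac{d}{ds}\big|_{s=0}\varrho(h_s)(v)$ (the paper carries out exactly this identification by exchanging the order of the $s$- and $t$-derivatives of $h_s(\exp_p(tv))$). The "point requiring care" you flag is precisely the computation the paper writes out, so nothing is missing.
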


\begin{proof}
Let $x$ be an element of the Lie algebra of~$H$. Then for all $q \in M$, the
Killing vector field~$X$ corresponding to~$x$ is given by $X(q) = \left. \frac
d {ds} \right|_{s=0} \left( h_s(q) \right),$ where $h_s$ denotes the isometry
of~$M$ given by the group element $\exp(sx)$, $s \in \R$. Let $\exp_p \colon
T_pM \to M$ denote the Riemannian exponential map of~$M$ at the point~$p$ and
let $v \in \s$. Then we have
\begin{align*}
\nabla_vX &=
 \left. \frac\nabla{\partial t} \frac\partial{\partial s} h_s(\exp_p(tv))
\right\rvert_{s=t=0} = \left. \frac\nabla{\partial s} \frac\partial{\partial t}
h_s(\exp_p(tv)) \right\rvert_{s=t=0} =\\ &= \left. \frac\nabla{\partial s}\left(
\frac\partial{\partial t} \left.h_s(\exp_p(tv))\right\rvert_{t=0} \right)
\right\rvert_{s=0} = \frac\nabla{\partial s} \left. (h_s)_{*p}(v)
\right\rvert_{s=0} = \frac\nabla{\partial s} \left. \varrho(h_s)(v)
\right\rvert_{s=0}.
\end{align*}
From this, it is clear that $\nabla_vX \in N_p\Sigma$ for all $v \in \s$ and
all Killing fields $X$ induced by the $H$-action on $M$ if and only if the
subspace $\s \subseteq T_pM$ intersects the orbits of the $H$-representation on
$T_pM$ orthogonally. Since $X(p)=0$, the statement of the proposition follows
from Lemma~\ref{LmOrthKill}.
\end{proof}

\paragraph{\em Proof of Theorem~\ref{ThPolarDual}}
Assume the $H$-action on $G/K$ is polar. As above, we may assume $q = [e]$ and
$q^* = [e^*]$, where $e \in G$ and $e^* \in G^*$ denote the identity elements.
Identifying as usual the tangent space $T_{[e]}M$ with $\p$, where $\g = \k
\oplus \p$ is a Cartan decomposition, we may identify the tangent space
of~$T_{[e^*]}M^*$ with $i\p$ in the obvious way. Let $\Sigma$ be a section
containing~$[e]$ of the $H$-action on $M$. Since $\Sigma$ is a totally geodesic
submanifold, its tangent space $T_{[e]}\Sigma$ is given by a Lie triple system
$\s \subseteq \p$. It follows from Proposition~\ref{PropInfCrit} that $\s$
intersects the orbits of the linear $H_{[e]}$-action on $T_{[e]}M = \p$
orthogonally. Now consider the $H^*$-action on $M^*$. Obviously, $i\s$
intersects the orbits of $H^*_{[e^*]}$ on $T_{[e^*]}M^* = i\p$ orthogonally and
thus it follows from Proposition~\ref{PropInfCrit} that the totally geodesic
submanifold $\Sigma^*$, which is defined as the exponential image
$\exp_{[e^*]}(i\s)$ of the Lie triple system $i\s \subseteq i\p$, intersects
the orbits of $H^*_{[e^*]}$ in $M^*$ orthogonally. The involution $\sigma^*
\colon \g^* \to \g^*$, defined by $\sigma^*(X) = X$ for $X \in \k$ and
$\sigma^*(Y) = -Y$ for $Y \in i\p$, restricts to an involution of~$\h^*$. Hence
any Killing vector field of~$M^*$ induced by the action of~$H^*$ can be
uniquely written as $X = X' + X''$ such that $X'$ and $X''$ are induced by the
action of~$H$ and $X'([e^*]) = 0$, $\nabla X''([e^*]) = 0$. We have already
seen that $X'(p^*) \perp T_{p^*}\Sigma^*$ for all $p^* \in \Sigma^*$ as $X'$ is
induced by the $H^*_{[e^*]}$-action on~$M^*$. Since $\Sigma$ intersects in
particular the orbit through $[e]$ orthogonally, it follows that $X''([e^*])
\perp \Sigma^*$ with respect to the dual metric. Hence by
Lemma~\ref{LmOrthKill} we get that $X''(p^*) \perp T_{p^*}\Sigma^*$ for all
$p^* \in \Sigma^*$. We have shown that $\Sigma^*$ intersects all $H^*$-orbits
orthogonally. Since $\dim(\Sigma^*)$ equals the cohomogeneity of the
$H^*$-action on $M^*$, it follows by a standard argument that $\Sigma^*$ meets
all $H^*$-orbits. One may proceed in an exactly analogous fashion to show that
the $H$-action on $M$ is polar if the $H^*$-action on $M^*$ is. It is obvious
that the symmetric space $\Sigma^*$ is dual to $\Sigma$. \hfill \qedsymbol


\section{Some applications}
\label{Applications}


We will now state some direct applications of Theorem~\ref{ThPolarDual}.
Henceforth we will always assume that a polar action is nontrivial in the sense
that the orbits are of positive dimension.

\begin{corollary}\label{CorSection}
Let $M$ be an irreducible symmetric space of noncompact type and let $H
\subseteq \Iso(M)$ be a reductive algebraic subgroup acting polarly on~$M$. Let
$\Sigma$ be the section of the $H$-action on $M$. Then $\Sigma$ is isometric to
a product $\R^{n_0} \times \HH^{n_1} \times \ldots \times \HH^{n_k}$.
\end{corollary}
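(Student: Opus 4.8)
The idea is to combine Theorem~\ref{ThPolarDual} with the classification of sections of polar actions on \emph{compact} irreducible symmetric spaces. By Theorem~\ref{ThPolarDual}, if $H$ acts polarly on the irreducible noncompact symmetric space $M$, then the dual action of $H^*$ on a compact dual $M^*$ is polar, and a section $\Sigma^*$ of the $H^*$-action is a symmetric space dual to a section $\Sigma$ of the $H$-action on $M$. Since $M$ is irreducible of noncompact type, $M^*$ is irreducible of compact type. Sections of polar actions are totally geodesic submanifolds, and sections of polar actions on compact symmetric spaces are themselves symmetric spaces of compact type (possibly with Euclidean factors) — this is a well-known fact, essentially because a complete totally geodesic submanifold of a symmetric space is again a symmetric space. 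So $\Sigma^*$ decomposes, up to finite cover, as a product of a flat torus $T^{n_0}$ and irreducible compact symmetric spaces $\Sigma^*_1 \times \cdots \times \Sigma^*_k$.

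First I would invoke the notion of duality "in the wider sense" defined in Section~\ref{Prelim}: the section $\Sigma$ of the $H$-action on $M$ is dual to $\Sigma^*$, meaning that the universal covers decompose into corresponding factors, each pair being either Euclidean of the same dimension or mutually dual irreducible symmetric spaces. Hence $\tilde\Sigma \cong \R^{n_0} \times \Sigma_1 \times \cdots \times \Sigma_k$, where each $\Sigma_j$ is the noncompact dual of the irreducible compact symmetric space $\Sigma^*_j$. The key remaining point is that each $\Sigma_j$ must be a real hyperbolic space $\HH^{n_j}$. This is where the real work lies: I must show that the section $\Sigma$ of a polar action on an irreducible noncompact symmetric space has \emph{rank one} on each irreducible factor, equivalently that $\Sigma^*$ is a product of spheres (and a torus).

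The argument for the rank-one property is geometric: $\Sigma$ is a complete totally geodesic flat-or-negatively-curved submanifold whose dimension equals the cohomogeneity of the action. The crucial constraint is that a section meets every orbit orthogonally; for a polar action on an irreducible symmetric space of noncompact type, the section cannot contain a totally geodesic submanifold isometric to an irreducible symmetric space of rank $\ge 2$ — otherwise, passing to the compact dual, $\Sigma^*$ would contain an irreducible totally geodesic factor of rank $\ge 2$, and one uses the structure theory of polar actions on compact symmetric spaces. In fact, I expect the cleanest route is: the slice representation at a point of a polar action is polar (quoted in the preliminaries as \cite[Theorem~4.6]{pt}), and polar representations have sections that are flat; combined with the fact that $\Sigma$ is totally geodesic in $M$ and its curvature is controlled by the Gauss equation, one shows the sectional curvatures of $\Sigma$ take only the values $0$ and a single negative value (after normalization), which forces each irreducible factor to be a real hyperbolic space. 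Alternatively, one appeals directly to the known classification: on a compact dual, the section of a polar action is a product of a torus and spheres whenever the ambient space is irreducible — this follows from work on polar actions together with Dadok's classification in the rank-one case and the reduction results in \cite{polar}.

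**The main obstacle.** The hard part is justifying that each irreducible factor $\Sigma^*_j$ of the compact section is a sphere rather than a higher-rank irreducible symmetric space. Once that is established, duality immediately gives $\Sigma_j = \HH^{n_j}$ and the flat factor dualizes to $\R^{n_0}$, completing the proof. I would expect the paper either to quote a structural result about sections of polar actions on compact irreducible symmetric spaces (of the form "the section is a product of a flat and spheres") or to derive the rank-one condition from the fact that $\Sigma$ is totally geodesic in the irreducible ambient space together with a curvature/holonomy argument; in either case the duality machinery of Theorem~\ref{ThPolarDual} does the rest of the work automatically.
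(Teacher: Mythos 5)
Your proposal matches the paper's proof: the paper simply combines Theorem~\ref{ThPolarDual} (the section dualizes to a section of the dual polar action on the compact dual $M^*$) with the quoted structural result \cite[Theorem 5.4]{polar}, which says exactly that sections of polar actions on compact irreducible symmetric spaces are products of a flat factor and spheres, so dualizing back yields $\R^{n_0}\times \HH^{n_1}\times\cdots\times\HH^{n_k}$. The "main obstacle" you identify is thus handled by citation rather than by a new curvature argument, precisely as in the second of the two routes you describe.
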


\begin{proof}
Follows from Theorem~\ref{ThPolarDual} and \cite[Theorem 5.4]{polar}.
\end{proof}

Corollary~\ref{CorSection} and \cite[Theorem 5.4]{polar} can be combined by
saying each section of a polar action of a reductive algebraic subgroup of the
isometry group on an irreducible symmetric space is locally isometric to a
Riemannian product of spaces of constant curvature. We can also show that the
Conjecture~\ref{ConjBiliotti} of Biliotti holds also for a large class of
noncompact symmetric spaces if one considers only actions of reductive
algebraic subgroups of the isometry group.

\begin{theorem}\label{ThPolHyp}
Let $M$ be an irreducible symmetric space of type~III such that $\rk(X) \ge 2$.
Let $H \subseteq \Iso(M)$ be a reductive algebraic subgroup acting polarly
on~$M$. Then the sections are flat, i.e.\ the action is hyperpolar.
\end{theorem}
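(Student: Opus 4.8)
The plan is to deduce Theorem~\ref{ThPolHyp} from the duality principle of Section~\ref{PolarDual} together with the author's earlier result on the compact side. First I would arrange that $H$ is connected: passing to the identity component does not change the connected components of the orbits, hence affects neither polarity nor hyperpolarity, and the identity component is again a reductive algebraic subgroup. With $H$ connected, the hypotheses of Remark~\ref{RemKM} and of the construction in Section~\ref{dual} are met, so after conjugating $H$ there is a base point for which $\h$ is canonically embedded (Theorem~\ref{ThOniVin}), and a dual action of a connected reductive algebraic subgroup $H^* \subseteq G^*$ on a compact dual $M^*$ of $M$ is defined; here $G = \Iso(M)_0$ and $G^*$ is the corresponding compact group.

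Next I would identify $M^*$ precisely enough to apply the compact result. Since $M$ is irreducible of type~III, the Lie algebra $\g = \Lie(G)$ is simple and noncompact; therefore $\g \otimes \C$ is simple and its compact real form $\g^*$ is simple, so $M^* = G^*/K^*$ is an irreducible compact symmetric space whose isometry group has simple Lie algebra, i.e.\ a space of type~I. Since a maximal abelian subspace of $\p$ and of $i\p$ have the same dimension, the rank is invariant under duality, so $\rk(M^*) = \rk(M) \ge 2$.

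The two ingredients now combine directly. By Theorem~\ref{ThPolarDual}, the $H^*$-action on $M^*$ is polar because the $H$-action on $M$ is polar. Since $M^*$ is an irreducible compact symmetric space with simple isometry group and $\rk(M^*) > 1$, the result of \cite{polar} (polar implies hyperpolar on such spaces) shows that the $H^*$-action on $M^*$ is hyperpolar. Applying Theorem~\ref{ThPolarDual} in the reverse direction, the $H$-action on $M$ is hyperpolar; equivalently, its sections are flat.

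I do not anticipate a genuine obstacle: the argument is a transfer of hypotheses across duality. The only points needing a word of justification are that the class ``irreducible, type~III'' corresponds under duality to ``irreducible compact, simple isometry group, same rank'' — which is exactly what makes \cite{polar} applicable — and that the reductive-algebraic hypothesis on $H$ is what licenses forming the dual action at all, via Theorem~\ref{ThOniVin}. The harmless discrepancies of connectedness and of the global form of $M^*$ are absorbed by the fact that (hyper)polarity depends only on the orbit-equivalence class.
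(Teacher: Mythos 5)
Your proposal is correct and follows exactly the route of the paper, whose proof of Theorem~\ref{ThPolHyp} is the one-line reduction ``Follows from Theorem~\ref{ThPolarDual} and the results of \cite{polar}''; you merely make explicit the (correct) verifications that duality sends a type~III space to a type~I space of the same rank and that the connectedness and global-form issues are harmless.
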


\begin{proof}
Follows from Theorem~\ref{ThPolarDual} and the results of \cite{polar}.
\end{proof}

\begin{theorem}\label{ThExcPolHyp}
Let $M$ be an exceptional symmetric space of type~IV, i.e.\ $X = \LE_6^\C /
\LE_6$, $\LE_7^\C / \LE_7$, $\LE_8^\C / \LE_8$, $\LF_4^\C / \LF_4$, $\LG_2^\C /
 \LG_2$. Let $H \subseteq \Iso(M)$ be a reductive algebraic subgroup acting
polarly on~$M$.  Then the sections are flat, i.e.\ the action is hyperpolar.
\end{theorem}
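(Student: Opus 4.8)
The plan is to deduce Theorem~\ref{ThExcPolHyp} from Theorem~\ref{ThPolarDual} by transporting the problem to the compact dual and invoking the known classification there. Let $M = G^\C/G$ be one of the listed symmetric spaces of type~IV, so that $G$ is a compact exceptional Lie group and $M$ is the symmetric space associated to the complex simple Lie group $G^\C$ regarded as a real Lie group. Its compact dual $M^*$ is the compact Lie group $G$ itself, equipped with a biinvariant metric (viewed as the symmetric space $(G \times G)/\Delta G$). If $H \subseteq \Iso(M)$ is a connected reductive algebraic subgroup acting polarly on $M$, then by Theorem~\ref{ThPolarDual} the dual action of $H^*$ on $M^*$ is again polar, and a section $\Sigma^*$ of the $H^*$-action is a symmetric space dual to a section $\Sigma$ of the $H$-action. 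Thus it suffices to show that the $H^*$-action on $M^*$ is hyperpolar: then $\Sigma^*$ is flat, hence so is its dual $\Sigma$, and the $H$-action on $M$ is hyperpolar.

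First I would reduce to the effective case, replacing $H$ and $H^*$ by their images in the respective isometry groups; this does not affect polarity, hyperpolarity, or the duality correspondence. Next I would observe that $M^*$ is precisely a symmetric space given by a compact simple Lie group of exceptional type endowed with a biinvariant Riemannian metric — exactly the setting of the author's earlier paper~\cite{polex}. By the result announced there (the validity of Conjecture~\ref{ConjBiliotti} for these spaces), every nontrivial polar action of a connected Lie group on such an $M^*$ is hyperpolar. Since we have assumed polar actions to be nontrivial, the $H^*$-action on $M^*$ is hyperpolar, and the conclusion follows from the last sentence of Theorem~\ref{ThPolarDual}.

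The one point requiring a little care — and the only plausible obstacle — is the identification of the compact dual: one must check that for each of $\LE_6^\C/\LE_6$, $\LE_7^\C/\LE_7$, $\LE_8^\C/\LE_8$, $\LF_4^\C/\LF_4$, $\LG_2^\C/\LG_2$ the dual compact symmetric space is indeed the corresponding compact group $\LE_6$, $\LE_7$, $\LE_8$, $\LF_4$, $\LG_2$ with a biinvariant metric, and that the $H^*$ produced by the duality construction genuinely acts on this compact group symmetric space so that \cite{polex} applies to it. This is standard: the Cartan decomposition of the realification $(\g^\C)_\R$ of a complex simple Lie algebra $\g^\C$ has $\k$ equal to a compact real form $\g$ and $\p = i\g$, so $\g^* = \k \oplus i\p = \g \oplus \g \cong \Lie(G \times G)$ with the diagonal as isotropy subalgebra, which is the symmetric pair defining the Lie group $G$ as a symmetric space. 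Since any $G$-invariant metric on an irreducible symmetric space is unique up to scaling, the choice of metric is immaterial, as already noted in the Preliminaries. With this identification in hand, Theorem~\ref{ThPolarDual} and~\cite{polex} combine to give the claim, and the proof is complete.

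\begin{proof}
We may assume the $H$-action is effective. Let $M^*$ be the compact dual of $M$. Since $M = \LG^\C/\LG$ for a compact simple exceptional Lie group $\LG$, a Cartan decomposition of the realification of the complex simple Lie algebra $\g^\C = \Lie(\LG^\C)$ is $\g^\C = \g \oplus i\g$ with $\k = \g$ a compact real form and $\p = i\g$; hence $\g^* = \k \oplus i\p = \g \oplus \g$, and $M^*$ is the compact Lie group $\LG$ with a biinvariant Riemannian metric, viewed as the symmetric space $(\LG \times \LG)/\Delta\LG$. As $M$ is irreducible, the invariant metric is unique up to scaling, so the choice of metric plays no role here. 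Let $H^*$ be a subgroup of $\Iso(M^*)$ such that the $H^*$-action on $M^*$ is dual to the $H$-action on $M$. By Theorem~\ref{ThPolarDual}, the $H^*$-action on $M^*$ is polar, and a section $\Sigma^*$ of the $H^*$-action is a symmetric space dual to a section $\Sigma$ of the $H$-action on $M$. Since $\LG$ is a compact simple Lie group of exceptional type, it follows from \cite{polex} that every nontrivial polar action of a connected Lie group on $M^*$ is hyperpolar; as we assume polar actions to be nontrivial, the $H^*$-action on $M^*$ is hyperpolar, so $\Sigma^*$ is flat. Its dual $\Sigma$ is then also flat, and therefore the $H$-action on $M$ is hyperpolar.
\end{proof}
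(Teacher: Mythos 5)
Your proof is correct and follows exactly the route the paper takes: the paper's own proof is the one-line "Follows from Theorem~\ref{ThPolarDual} and the results of \cite{polex}," and you have simply spelled out the (standard and correctly handled) identification of the compact dual of $\LG^\C/\LG$ as the group $\LG$ with a biinvariant metric before invoking those two results. No gaps.
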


\begin{proof}
Follows from Theorem~\ref{ThPolarDual} and the results of \cite{polex}.
\end{proof}

\begin{theorem}\label{ThHyper}
Let $M$ be an irreducible Riemannian symmetric space. Assume the reductive
algebraic subgroup $H \subseteq G = \Iso(M)$ acts hyperpolarly and with
cohomogeneity greater than one on~$M$. Then the action of~$H$ on~$M$ is orbit
equivalent to a Hermann action.
\end{theorem}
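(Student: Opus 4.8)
The plan is to reduce the noncompact statement to the already-known compact classification via the duality machinery established in Theorem~\ref{ThPolarDual}. Suppose $H \subseteq G = \Iso(M)$ is a reductive algebraic subgroup acting hyperpolarly on $M$ with cohomogeneity $\ge 2$. First I would dispose of the type~I and type~II (compact) cases: here the statement is exactly the content of the classification of hyperpolar actions on compact irreducible symmetric spaces in \cite{hyperpolar}, where it is shown that a hyperpolar action of cohomogeneity greater than one is orbit equivalent to a Hermann action. (For type~I one must observe that every isometric action is by a compact, hence reductive algebraic, subgroup; for type~II, i.e.\ $M$ a compact simple Lie group, the same applies.) So the real work is the noncompact type~III (and type~IV) case, where $M = G/K$ with $G$ noncompact simple.

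In the noncompact case, apply the Karpelevich--Mostow theorem in the form of Theorem~\ref{ThOniVin} and Remark~\ref{RemKM}: after conjugating $H$, its Lie algebra $\h$ is canonically embedded with respect to a Cartan decomposition $\g = \k \oplus \p$, so that $\h^* := \psi(\h) \subseteq \g^*$ defines a dual action of a compact group $H^* \subseteq G^*$ on the compact dual $M^* = G^*/K^*$. By Theorem~\ref{ThPolarDual}, since the $H$-action is hyperpolar, the $H^*$-action on $M^*$ is hyperpolar; by Theorem~\ref{ThDualAct}(v) it has the same cohomogeneity, hence cohomogeneity $\ge 2$. By the compact classification \cite{hyperpolar}, the $H^*$-action on $M^*$ is orbit equivalent to a Hermann action, i.e.\ there is an involutive automorphism $\sigma^*$ of $\g^*$ such that the fixed point group $(G^*)^{\sigma^*}$ is orbit equivalent to $H^*$ on $M^*$; in particular $\h^*$ is, up to the inclusion of orbit-equivalent subgroups, the fixed set of an involution of $\g^*$.

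Now I would run the duality construction backwards. The involution $\sigma^*$ of $\g^*$ must be related to the Cartan involution $\theta$ defining $M^*$ out of $M$, and one needs to transport it to an involution $\sigma$ of $\g$ whose fixed-point subgroup $G^\sigma$ gives a Hermann action on $M$ orbit equivalent to the original $H$-action. The key point is that orbit equivalence, (hyper)polarity, cohomogeneity and the local isomorphism type of the principal isotropy are all preserved under duality (Theorems~\ref{ThPolarDual} and \ref{ThDualAct}), and that a Hermann action on $M^*$ dualizes to a Hermann action on $M$: if $\sigma^*$ commutes (after conjugation) with $\theta$ — which one should be able to arrange, using that the pair $((G^*)^{\sigma^*}, \theta)$ yields a totally geodesic orbit and invoking the criterion of \cite{hertgo} together with Theorem~\ref{ThDualAct} — then $\sigma^*$ restricts to an involution on $\k$ and on $i\p$, hence defines via $\psi^{-1}$ an involution $\sigma$ of $\g$ with $\g^\sigma = \psi^{-1}((\g^*)^{\sigma^*})$ canonically embedded, and the $G^\sigma$-action on $M$ is precisely the dual of the Hermann $(G^*)^{\sigma^*}$-action on $M^*$. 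Since duality preserves orbit equivalence classes, and the original $H$-action dualizes to the $H^*$-action which is orbit equivalent to the $(G^*)^{\sigma^*}$-action, the $H$-action on $M$ is orbit equivalent to the Hermann action of $G^\sigma$.

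The main obstacle I anticipate is the last step: ensuring that the involution realizing the Hermann action on the compact dual can be chosen to commute with the duality involution $\theta$, so that it genuinely descends to an involution of $\g$ and not merely to an involution of the complexification that fails to preserve the real form $\g$. One must check that the section $\Sigma^*$ and the principal isotropy data — which by Theorem~\ref{ThDualAct} match those on $M$ — are compatible with the Hermann structure, and possibly enumerate, using the classification in \cite{hyperpolar}, the Hermann actions that actually arise and verify case by case that each dualizes. A secondary point is that \cite{hyperpolar} gives orbit equivalence to a Hermann action only up to the orbit equivalences among candidate groups, so one should phrase everything at the level of orbit equivalence classes throughout, which is harmless since (hyper)polarity and cohomogeneity are orbit-equivalence invariants.
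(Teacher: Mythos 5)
Your overall strategy --- dualize, invoke the compact classification of \cite{hyperpolar}, and dualize back --- is exactly the paper's, and the first two steps (the compact case, and the passage to a hyperpolar $H^*$-action of the same cohomogeneity on $M^*$) are fine. The gap is in the return trip. You assert that ``duality preserves orbit equivalence classes'' and conclude that, since the $H^*$-action is orbit equivalent to the Hermann $(G^*)^{\sigma^*}$-action, the $H$-action must be orbit equivalent to a dual Hermann action on $M$. No such principle is established, and it is essentially contradicted by the paper's own Example~\ref{ExplHerm} (and Example~\ref{ExplHS}): a single action on the compact side is dual to several pairwise non-orbit-equivalent actions on the noncompact side, because the dual action depends on the choice of totally geodesic base orbit, and a Hermann action on $M^*$ may have several non-conjugate types of such orbits. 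Your proposed repair of the commuting-involution problem has the same defect: the criterion of \cite{hertgo} only yields that \emph{some} conjugate of $\sigma^*$ commutes with the Cartan involution, i.e.\ that \emph{some} orbit of the Hermann action is totally geodesic and suitably based; it does not place that orbit at $[e^*]$, coinciding with $H^*\cdot[e^*]$, which is what you would need for the resulting Hermann action on $M$ to bear any relation to the $H$-orbits. (Note also Examples~\ref{RemCpt}(iii): a totally geodesic orbit through the base point does not by itself force the decomposition (\ref{EqnCartanDec2}).)

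The missing ingredient, which the paper uses, is that the classification in \cite{hyperpolar} produces the orbit-equivalent Hermann subgroup $L^*$ \emph{containing} $H^*$, i.e.\ $\l^*\supseteq\h^*$. This containment does both jobs at once. First, since $H^*\subseteq L^*$ and the two actions are orbit equivalent, the $L^*$-orbit through $[e^*]$ coincides (up to components) with the totally geodesic $H^*$-orbit, so the projection of $\l^*$ to $i\p$ equals $\h^*\cap i\p\subseteq\l^*$, whence $\l^*=(\l^*\cap\k)\oplus(\l^*\cap i\p)$ automatically; the involution with fixed set $\l^*$ then descends to an involution $\sigma$ of $\g$ with fixed set $\l=\psi^{-1}(\l^*)$ via the four-fold decomposition (\ref{EqnFourDecomp}), with no conjugation, no appeal to \cite{hertgo}, and no case-by-case checking. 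Second, pulling back gives $H\subseteq L$, so every $H$-orbit lies in an $L$-orbit, and Proposition~\ref{PropKM}(i) together with Theorem~\ref{ThDualAct}(v) gives $\dim(L\cdot p)=\dim(H\cdot p)$ for every $p\in M$; this, not a transfer of orbit equivalence through duality, is what actually identifies the $H$- and $L$-orbits on $M$. Without the containment, neither step of your argument goes through.
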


\begin{proof}
It was shown in~\cite{hyperpolar} that a hyperpolar action on an irreducible
compact symmetric space of cohomogeneity greater than one is orbit equivalent
to a Hermann action. Now assume $M$ is noncompact. Consider a dual action of a
subgroup $H^*$ on a compact dual symmetric space $M^* = G^* / K^*$. Assume that
$\h$ is canonically embedded as in (\ref{EqnCartanDec2}) with respect to a
Cartan decomposition $\g = \k \oplus \p$. We have $\h^* = (\h \cap \k) \oplus
i(\h \cap\p)$. By the result of~\cite{hyperpolar}, it follows that the action
of~$H^*$ on $M^*$ is orbit equivalent to the action of $L^* \subseteq G^*$,
where  $\l^* := \Lie L^* \supseteq \h^*$ and where $\l^*$ is the fixed point
set of some involutive automorphism $\tau \colon \g^* \to \g^*$. The connected
components containing $[e^*]$ of the $H^*$-orbit and of the $L^*$-orbit through
$[e^*]$ agree, thus the projections of~$\h^*$ and $\l^*$ on~$i\p$ agree as
well. Hence we have $\l^* \cap i\p = \h^* \cap i\p$ and $\l^* = (\l^* \cap \k)
\oplus (\l^* \cap i\p)$. It follows that $ \smash \l := \psi^{-1}(\l^*) = (\l^*
\cap \k) \oplus i(\l^* \cap i\p) $ is a subalgebra of~$\g$. Let $\g^* = \l^*
\oplus \m^*$ be the decomposition of $\g^*$ into eigenspaces of~$\tau$. Then we
have the decomposition
\begin{equation}\label{EqnFourDecomp}
\g = (\l^* \cap \k) \oplus i(\l^* \cap i\p) \oplus (\m^* \cap \k) \oplus i(\m^*
\cap i\p).
\end{equation}
Let $\m := (\m^* \cap \k) \oplus i(\m^* \cap i\p)$ and define $\sigma \colon \g
\to \g$ by $\sigma(X) = X$ for $X \in \l$, $\sigma(Y) = -Y$ for $Y \in \m$.
Then $\sigma$ is an involutive automorphism of~$\g$ such that $\l$ is the fixed
point set of~$\sigma$. This follows from the fact that $\sigma$ is just the
restriction $\hat \tau|_\g$ of the automorphism $\hat \tau \colon \g(\C) \to
\g(\C)$ defined by $\hat \tau(X+iY) = \tau(X) +i\tau(Y)$ for $X,Y \in \g^*$, as
can be see from~(\ref{EqnFourDecomp}). Hence the action of the connected
subgroup~$L$ of~$G$ corresponding to~$\l$ is a Hermann action. By construction,
we have $L \supseteq H$ and the $H$-orbits are thus contained in the $L$-orbits
on~$M$. It follows from Proposition~\ref{PropKM}~(i) and
Theorem~\ref{ThDualAct}~(v) that for each $p \in M$ we have $\dim( L \cdot p )=
\dim( H \cdot p )$. We conclude that the $L$-action and the $H$-action on $M$
are orbit equivalent.
\end{proof}


\section{The inverse construction}
\label{Reverse}


Let $M^* = G^* / K^*$ be a symmetric space of compact type and let $G^*$ be the
connected component of the isometry group of~$M^*$. Let $H^*$ be a closed
subgroup of~$G^*$. Let $M=G/K$ be a Riemannian globally symmetric space such
that $M^*$ is a compact dual of~$M$ and such that $G$ is the connected
component of the isometry group of~$M$. Obviously, the action of~$H^*$ is the
dual of an action of a subgroup $H \subseteq G$ on~$M$ if and only if $H^*$ is
conjugate to a subgroup such that (\ref{EqnDualCan}) holds. We have already
seen in Example~\ref{ExplHS} that an action on a compact symmetric space can be
dual to different -- and nonconjugate -- actions on the dual space. In
Example~\ref{ExplHerm} we will look at a specific type of Hermann action from
this point of view. As it will turn out, this action has several dual actions
of various (non-isomorphic) groups on the noncompact dual space, cf.\
\cite{bt1}, where the same phenomenon arises in the context of cohomogeneity
one actions.

\begin{example}\label{ExplHerm}\rm
Let $m,p,q$ be integers such that $1 \le p,q \le m$ and let $n = 2m+1$. We
consider the Hermann action of $H^* = \SO(q) \times \SO(n-q)$ on the
Grassmannian of oriented $p$-dimensional linear subspaces in $\R^n$, which we
denote by ${\rm G}_p(\R^n) = \SO(n) / \SO(p) \times \SO(n-p) = G^* / K^*$. We
will determine all conjugates of $H^*$ for which (\ref{EqnDualCan}) holds. This
is equivalent to determining all types of totally geodesic $H^*$-orbits on $G^*
/ K^*$. The decomposition~(\ref{EqnDualCan}) holds if and only if the
involutions $\sigma$ and $\theta$ commute, where $\sigma,\theta \in \Aut(\g^*)$
are chosen such that $\k = \Lie (K^*) = (\g^*)^\sigma$ and $\h^* =
(\g^*)^\theta$. Define the diagonal matrices
$$
I_{k,n-k} := \left(
             \begin{array}{cc}
               -E_k &  \\
                & E_{n-k} \\
             \end{array}
           \right) \in \O(n),
$$
where $E_k$ denotes the $(k \times k)$-identity matrix. Then $\Ad(I_{k,n-k})
\colon \SO(n) \to \SO(n)$ is an inner automorphism of $\SO(n)$ and we have
$\theta = \Ad(I_{q,n-q})$, $\sigma = \Ad(I_{p,n-p})$. The adjoint
representation $\Ad \colon \SO(n) \to \Aut(\so(n))$ is faithful since $n$ is
odd. Thus for $A,B \in \SO(n)$ we have $\Ad(AB) = \Ad(BA)$ if and only if $A$
and $B$ commute. Now let $A_g = g \cdot I_{q,n-q} \cdot g^{-1}$ for $g \in G^*$
and let $B = I_{p,n-p}$. Then the connected component of the fixed point set of
$\theta_g := \Ad(A_g)$ is $gH^*g^{-1}$.

Assume now that $g \in  G^*$ is such that $\sigma \circ \theta_g = \theta_g
\circ \sigma$. We will determine the type of the $H^*$-orbit through $[g^{-1}]
= g^{-1}K^*$, i.e.\ we compute the isotropy subgroup $$H^*_{[g{^-1}]} = \{ h
\in H^* \mid h g^{-1} K^* = g^{-1} K^* \},$$ which is conjugate to $g H^* g^{-1}
\cap K^*$. If the matrices $A_g$ and $B$ commute, then there is a decomposition
$\R^n = V_{00} \oplus V_{01} \oplus V_{10} \oplus V_{11}$ such that $A_g
\rvert_{V_{\varepsilon\delta}} = (-1)^\varepsilon \cdot
\id_{V_{\varepsilon\delta}}$ and $B\rvert_{V_{\varepsilon\delta}} = (-1)^\delta
\cdot \id_{V_{\varepsilon\delta}}$. Let $r := \dim(V_{00})$. Then we have $0
\le r \le \min(p,q)$ and $r$ attains all values in this range for suitable $g
\in G^*$. We obtain $gHg^{-1} \cap K^* =$
\begin{equation*}\label{EqnTGOrbits}
=\left\{
  \begin{array}{ll}
    \SO(r) \times \SO(p-r) \times \SO(q-r) \times \SO(n-p-q+r), & \hbox{if $1 \le r < \min(p,q)$;} \\
    \SO(p) \times \SO(q) \times \SO(n-p-q), & \hbox{if $r=0$;} \\
    \SO(p) \times \SO(q-p) \times \SO(n-q), & \hbox{if $r=p<q$;} \\
    \SO(q) \times \SO(p-q) \times \SO(n-p), & \hbox{if $r=q<p$;} \\
    \SO(p) \times \SO(n-p), & \hbox{if $r=p=q$.}
  \end{array}
\right.
\end{equation*}
Note that the value of~$r$ determines the orbit type of the $H^*$-orbit
through~$[g^{-1}]$. Finally, we can determine the conjugacy classes of
connected closed subgroups~$H$ of $G = \SO_0(p,n-p)$ with the property the
$H^*$-action on~$M^*$ is dual to the $H$-action on~$M = \SO_0(p,n-p) / \SO(p)
\times \SO(n-p)$. In case $p<q$ they are given by
\begin{align*}
    &\SO_0(p,n-p-q) \times \SO(q); \\
    &\SO_0(r,q-r) \times \SO_0(p-r,n-p-q+r),\ 1\le r < p; \\
    &\SO_0(p,q-p) \times \SO(n-q).
\end{align*}
If $q < p$ we obtain
\begin{align*}
    &\SO_0(p,n-p-q) \times \SO(q); \\
    &\SO_0(r,q-r) \times \SO_0(p-r,n-p-q+r),\ 1\le r < q; \\
    &\SO_0(p-q,n-p) \times \SO(q).
\end{align*}
Finally, in case $p=q$ they are
\begin{align*}
    &\SO_0(p,n-2p) \times \SO(p); \\
    &\SO_0(r,p-r) \times \SO_0(p-r,n-2p+r),\ 1\le r < p; \\
    &\SO(p) \times \SO(n-p).
\end{align*}
This example nicely illustrates how one action on a compact symmetric space can
be the dual of several nonconjugate actions on the noncompact dual symmetric
space. In this case, the data determining the various actions on the noncompact
space is encoded into just one action on the compact dual.  This imbalance is
made up for by the fact that the various actions on the noncompact space are of
a simpler structure in that the whole space is equivariantly diffeomorphic to
the normal bundle of a totally geodesic orbit, which is not true for the dual
action on the compact space.
\end{example}


\section{Polar actions on real hyperbolic space}
\label{PAHS}


Using duality and the classification of polar actions on compact rank one
symmetric spaces by Podest\`{a} and Thorbergsson~\cite{pth1}, we will obtain a
classification of polar actions of reductive algebraic subgroups of the
isometry group on noncompact rank one symmetric spaces.  We start with real
hyperbolic space. Note that polar actions on real hyperbolic space have been
classified by Bingle Wu \cite{wu} without assuming that the action is induced
by a reductive algebraic subgroup of the isometry group. However, the duality
method we are using here is not restricted to spaces of constant curvature and
we will obtain classification results also for the other noncompact rank one
symmetric spaces in Sections~\ref{PACH}--\ref{PACayHP}.

\begin{theorem}\label{ThHyperbolic}
Let $H \subseteq G := \SO_0(1,n)$ be a connected reductive algebraic subgroup.
Then the $H$-action on hyperbolic space $\HH^n = \SO_0(1,n) / \SO(n)$ is polar
if and only if one the following is true.
\begin{enumerate}

\item The subgroup $H$ is conjugate to $\SO_0(1,k) \times L \subseteq
    \SO(1,n)$, $k= 1, \ldots n$, where $L \subseteq \SO(n-k)$ is a subgroup
    acting polarly on $\R^{n-k}$.

\item The subgroup $H$ is conjugate to a subgroup $L \subseteq \SO(n)$
    acting polarly on $\R^{n}$.

\end{enumerate}
In case~(i) the $H$-action has a totally geodesic orbit isometric to $\HH^k$,
in case~(ii) it has a fixed point.
\end{theorem}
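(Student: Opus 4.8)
The plan is to transfer the problem to the compact dual $\eS^n = \SO(n+1)/\SO(n)$ via Theorem~\ref{ThPolarDual} and then invoke the Podest\`a--Thorbergsson classification~\cite{pth1} of polar actions on the sphere. First I would observe that $H \subseteq \SO_0(1,n)$, being connected and reductive algebraic, is canonically embedded with respect to some Cartan decomposition $\g = \k \oplus \p$ with $\k = \so(n)$, $\p \cong \R^n$, by Theorem~\ref{ThOniVin}; after conjugation we may assume the base point is $[e]$. The dual action is then the $H^*$-action on $\eS^n$, where $\h^* = (\h \cap \so(n)) \oplus i(\h \cap \p) \subseteq \so(n+1)$, and by Theorem~\ref{ThPolarDual} the $H$-action on $\HH^n$ is polar if and only if the $H^*$-action on $\eS^n$ is polar. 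So it suffices to classify, up to the equivalence induced by duality, the connected subgroups $H^* \subseteq \SO(n+1)$ acting polarly on $\eS^n$ that admit a totally geodesic orbit.

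Next I would recall from~\cite{pth1} (and \cite{dadok}) that polar actions on $\eS^n$ are, up to orbit equivalence, exactly those given by the isotropy representations of symmetric spaces, i.e.\ $s$-representations. Since a connected $H^*$ acting polarly on $\eS^n$ with a totally geodesic orbit corresponds, via the inverse construction of Section~\ref{Reverse}, to the requirement that the Cartan involution $\theta$ of $\so(n+1)$ (fixing $\so(n)$) commute with an involution whose fixed algebra is $\h^*$ up to orbit equivalence, the relevant $H^*$ are, up to orbit equivalence, of the form $\SO(k+1) \times L$ acting on $\eS^n = \eS(\R^{k+1} \oplus \R^{n-k})$ by the sum of the standard action on $\R^{k+1}$ and a polar $L$-action on $\R^{n-k}$ (the case $k+1 = n+1$ giving transitivity, the case "$k+1 = 0$" giving a polar linear action on $\R^n$ fixing a point). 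These are precisely the reducible $s$-representations of the sphere with a spherical factor, which are the ones admitting a totally geodesic orbit; an irreducible nontransitive $s$-representation has no totally geodesic orbit by Examples~\ref{RemCpt}(ii). Dualizing $\SO(k+1) \times L$ back — replacing the $\so(k+1) \cap \p$ part of the first factor by its $i$-image — turns $\SO(k+1)$ acting on $\R^{k+1} = \R \oplus \R^k$ into $\SO_0(1,k)$ acting on $\HH^k$, yielding case~(i), while the fixed-point case $H^* = L \subseteq \SO(n)$ dualizes to itself, giving case~(ii). The final geometric statements about the totally geodesic orbit being $\HH^k$ (resp.\ a fixed point) follow from Theorem~\ref{ThDualAct}(iii).

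Conversely, I would check that both families in (i) and (ii) do act polarly on $\HH^n$: their duals are the polar $s$-representation-type actions $\SO(k+1) \times L$ on $\eS^n$ and $L$ on $\eS^n$ fixing a point, which are polar by~\cite{dadok,pth1}, so Theorem~\ref{ThPolarDual} gives polarity of the original actions. Explicitly, in case~(i) a section is $\HH^{1} \times \Sigma_L$ where $\Sigma_L$ is a section of the polar $L$-action on $\R^{n-k}$ sitting in the normal space, and in case~(ii) a section is the exponential image of a section of the linear polar $L$-action on $\p = \R^n$, which is flat; this also recovers the description of sections from Corollary~\ref{CorSection}.

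The main obstacle I anticipate is the bookkeeping in the inverse construction: showing that \emph{every} connected reductive algebraic $H \subseteq \SO_0(1,n)$ acting polarly is conjugate to one in the two listed families, i.e.\ that the only polar $H^*$-actions on $\eS^n$ with a totally geodesic orbit are, up to conjugacy (not merely orbit equivalence), the split ones $\SO(k+1) \times L$ and the fixed-point ones. This requires using that on the noncompact side the totally geodesic orbit type is \emph{unique} (Proposition~\ref{PropKM}(ii)), so that the orbit-equivalence ambiguity in~\cite{pth1} does not produce extra conjugacy classes, together with the fact that a reductive algebraic subgroup $H$ is essentially determined by the pair $(\h \cap \k, \h \cap \p)$, where $\h \cap \p$ is a Lie triple system in $\p = \R^n$, hence (the tangent space of) a totally geodesic $\HH^k$ or a point. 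Handling the boundary cases $k=1$, $k=n$, and distinguishing when $\SO_0(1,k) \times L$ is actually reductive algebraic (which it always is here, as $L$ is compact and $\SO_0(1,k)$ is semisimple) rounds out the argument.
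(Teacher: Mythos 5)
Your proposal follows the paper's overall strategy --- dualize via Theorem~\ref{ThPolarDual}, analyze the totally geodesic orbit of $H^*$ on $\eS^n$, and dualize back --- but the middle step is organized differently. The paper does not invoke the Podest\`{a}--Thorbergsson/Dadok classification of polar actions on spheres up to orbit equivalence at all: it observes that the totally geodesic orbit $H^*\cdot[e^*]$ is a great sphere $\eS^k=\eS^n\cap V$ whose span $V$ is an $H^*$-invariant subspace, that the canonical embedding~(\ref{EqnDualCan}) forces $H^*$ to be exactly of the form $\SO(k+1)\times L$ (the Lie triple system $\h^*\cap i\p$ is the full tangent space of the great sphere and generates $\so(k+1)$ inside $\h^*$, with $L$ in its centralizer), and then uses only the fact that polar representations restrict polarly to invariant submodules to conclude that $L$ acts polarly on $\R^{n-k}$. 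Your detour through the orbit-equivalence classification of \cite{pth1}, \cite{dadok} creates precisely the difficulty you flag at the end --- the theorem asserts conjugacy, and orbit equivalence does not a priori determine $H^*$ up to conjugacy --- and your sketched resolution (that $\h\cap\p$ is tangent to a totally geodesic $\HH^k$ and, together with $[\h\cap\p,\h\cap\p]$, generates $\so(1,k)$ as an ideal of $\h$) is in substance the paper's direct argument; so the classification step is redundant rather than wrong, and you should simply make the direct argument the proof.

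One genuine error, though peripheral to the theorem: your claimed section $\HH^{1}\times\Sigma_L$ in case~(i) has dimension $1+\dim\Sigma_L$ and is flat, whereas the cohomogeneity of the action equals $\dim\Sigma_L$ (the slice representation at the totally geodesic orbit $\HH^k$ is just the $L$-action on $\R^{n-k}$, since $\SO(k)$ acts trivially on the normal space), and a section is the totally geodesic $\HH^{\dim\Sigma_L}=\exp_{[e]}(\Sigma_L)$, dual to the great-sphere section $\eS^{\dim\Sigma_L}$ of the compact action. This is consistent with Corollary~\ref{CorSection} but not with your description; it does not affect the proof of the statement itself, which makes no assertion about sections.
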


\begin{proof}
Assume $H$ acts polarly on $\HH^n$. It follows from Theorem~\ref{ThPolarDual}
that there is a dual polar action of a compact connected group $H^* \subseteq
\SO(n+1)$ on the sphere $\eS^n = \SO(n+1) / \SO(n)$ and we may assume that the
orbit $H^* \cdot [e^*]$ is a totally geodesic submanifold of~$\eS^n$ such that
(\ref{EqnDualCan}) holds. We may identify the sphere $\eS^n$ with a sphere
around the origin in the Euclidean space $\R^{n+1}$ and assume the action
of~$H^*$ on $\eS^n$ is given by restriction of the standard representation
of~$\SO(n+1)$. The totally geodesic orbit $H^* \cdot [e^*]$ is then given by
the intersection of~$\eS^n$ with some linear subspace $V \subseteq \R^{n+1}$.
This space $V$ is an invariant subspace for the $H^*$-action on~$\R^{n+1}$ and
the orbit $H^* \cdot [e^*]$ is a great sphere $\eS^k \subseteq \eS^n$, where $0
\le k \le n$. We may assume that $V$ is spanned by the first $k+1$ canonical
basis vectors of $\R^{n+1}$. Let $V^\perp$ be the orthogonal complement of~$V$
in $\R^{n+1}$. It follows from (\ref{EqnDualCan}) that $H^*$ is of the form
$\SO(k+1) \times L$, where the first factor is standardly embedded and where
the factor $L$ is contained in the centralizer of the first factor. Hence
$\SO(k+1)$ acts by the standard representation on~$V$ and trivially
on~$V^\perp$, while the second factor acts trivially on~$V$. Since polar
representations act polarly on their invariant submodules~\cite{dadok}, it
follows that $L \subseteq \SO(n-k)$ is a compact connected group whose action
on~$\R^{n-k}$ is polar. It follows that $H$ is as in item~(i) if $k \ge 1$ and
as in item~(ii) if $k=0$ and the corresponding orbit $H \cdot [e]$ is isometric
to $\HH^k$ in case $k=1,\ldots,n$, or a point in case $k=0$.

Conversely, it is easy to see that the actions as described in (i) and (ii)
have polar dual actions and are hence polar by Theorem~\ref{ThPolarDual}.
\end{proof}

Let us compare the above theorem with the result of Bingle
Wu~\cite[Theorem~3.3]{wu}, which is very similar and which was proven without
assuming that the subgroup of $\SO_0(1,n)$ given by the action is reductive
algebraic. Instead it was assumed in \cite{wu} that the principal orbits of the
polar action are {\em full} isoparametric submanifolds of~$\HH^n$, i.e.\ they
are not contained in a totally umbilic submanifold; however it is shown in
\cite[Corollary~2.6]{wu} that such an action always has a totally geodesic
orbit and that it is orbit equivalent to an action of some subgroup of
$\SO_0(1,n)$ conjugate to $\SO_0(1,k) \times L$, where $L$ is a compact Lie
group. In particular, the action is orbit equivalent to the action of a
reductive algebraic subgroup of the isometry group. It follows from
\cite[Theorem~3.1]{wu} that polar actions on $\HH^n$ whose principal orbits are
not full are given by polar actions on a totally umbilic submanifold~$U$
of~$\HH^n$. Such a totally umbilic submanifold $U \subset \HH^n$ is either a
totally geodesic $\HH^k \subset \HH^n$, a round sphere, or a submanifold which
is flat in its induced metric. In the last case, it follows from
\cite[Theorem~3.1]{wu} that the action is as described in
Example~\ref{ExplEHP}. The case where the principal orbits of an action are
contained in a round sphere corresponds to the case of actions with a fixed
point.

\begin{corollary}\label{CorFullAlgRed}
The principal orbits of a polar action on $\HH^n$ are full isoparametric
submanifolds of~$\HH^n$ if and only if the action is orbit equivalent to an
action of a reductive algebraic subgroup of $\Iso(\HH^n)$ such that a dual
action on $\eS^n$ is polar with full isoparametric submanifolds of~$\eS^n$  as
principal orbits.
\end{corollary}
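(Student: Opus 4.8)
The plan is to compare Theorem~\ref{ThHyperbolic} with the description of polar actions on $\HH^n$ from \cite{wu} recalled above; the one new ingredient is an intrinsic criterion telling us when the principal orbits of the actions listed in Theorem~\ref{ThHyperbolic} are full isoparametric submanifolds, on both the noncompact and the compact side. First I would work in the hyperboloid model $\HH^n = \{\,x\in\R^{1,n}\mid\langle x,x\rangle=-1,\ x_0>0\,\}$ inside Minkowski space of signature $(1,n)$, using the classical fact that the totally umbilic submanifolds of $\HH^n$ are exactly the intersections $\HH^n\cap\Pi$ with affine subspaces $\Pi\subseteq\R^{1,n}$ meeting $\HH^n$; hence a submanifold of $\HH^n$ is full if and only if its affine span in $\R^{1,n}$ is everything. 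Let $H\subseteq\SO_0(1,n)$ be a connected reductive algebraic subgroup acting polarly; by Theorem~\ref{ThHyperbolic} we may take $H=\SO_0(1,k)\times L$, with $\SO_0(1,k)$ acting on the first $k+1$ coordinates and $L\subseteq\SO(n-k)$ acting on the last $n-k$ coordinates and polarly on $\R^{n-k}$, the case $k=0$ being the fixed point case. Writing $F\subseteq\R^{n-k}$ for the fixed space of $L$ and decomposing the last $n-k$ coordinates of a generic point $p$ as $w=w_F+w'$ with $w_F\in F$, $w'\in F^\perp$, the orbit $H\cdot p$ is the product of the $\SO_0(1,k)$-orbit of the first coordinate block, which for $k\ge1$ is a one-sheeted quadric whose affine span is all of $\R^{k+1}$ and contains the origin, with $L\cdot w=w_F+L\cdot w'$, whose affine span is $w_F+\spann(L\cdot w')$; as $L$ has no nonzero fixed vector on $F^\perp$, the barycentre of $L\cdot w'$ is $0$, so $\spann(L\cdot w')=F^\perp$ for generic $w'$. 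Thus for $k\ge1$ the affine span of $H\cdot p$ has dimension $n+1-\dim F$, equal to $\R^{1,n}$ precisely when $F=\{0\}$, while for $k=0$ the first coordinate is constant along each orbit and the affine span is always proper. Since a principal orbit of a polar action on a space of constant curvature automatically has flat normal bundle and constant principal curvatures and is therefore isoparametric, I would conclude: the principal orbits of the $H$-action on $\HH^n$ are full isoparametric submanifolds if and only if $k\ge1$ and $F=\{0\}$.

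Next I would identify the dual action. By the proof of Theorem~\ref{ThHyperbolic}, a dual action on $\eS^n=\SO(n+1)/\SO(n)$ is the action of $H^*=\SO(k+1)\times L$, where $\eS^n$ is a round sphere about the origin in $\R^{n+1}=\R^{k+1}\oplus\R^{n-k}$, the factor $\SO(k+1)$ acting standardly on $\R^{k+1}$ and $L$ acting on $\R^{n-k}$ as above. This action is induced by a polar representation, so its principal orbits are isoparametric submanifolds of $\eS^n$, and by the argument of the previous paragraph such a principal orbit is full in $\eS^n$ exactly when the representation of $\SO(k+1)\times L$ on $\R^{n+1}$ has no nonzero fixed vector, i.e.\ exactly when $k\ge1$ and $F=\{0\}$. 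Hence the $H$-action on $\HH^n$ has full isoparametric principal orbits if and only if its dual $H^*$-action on $\eS^n$ does.

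Finally I would assemble the corollary. If a polar action on $\HH^n$ has full isoparametric principal orbits, then by \cite[Corollary~2.6]{wu} it is orbit equivalent to the action of a subgroup $H$ of $\SO_0(1,n)$ conjugate to $\SO_0(1,k)\times L$ with $L$ compact; this action is again polar, so $H$ is reductive algebraic and $L$ acts polarly on $\R^{n-k}$ by Theorem~\ref{ThHyperbolic}, and it still has full isoparametric principal orbits because that property depends only on the orbit equivalence class — an orbit equivalence of $\HH^n$ is an isometry and carries totally umbilic submanifolds to totally umbilic submanifolds. By Theorem~\ref{ThPolarDual} a dual action of $H^*\subseteq\Iso(\eS^n)$ on $\eS^n$ is polar, and by the second paragraph its principal orbits are full isoparametric submanifolds of $\eS^n$. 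Conversely, if a polar action on $\HH^n$ is orbit equivalent to the action of a reductive algebraic subgroup $H$ whose dual action on $\eS^n$ is polar with full isoparametric principal orbits, then the $H$-action on $\HH^n$ is polar by Theorem~\ref{ThPolarDual}, hence of the form in Theorem~\ref{ThHyperbolic}; the fullness of the dual principal orbits forces $k\ge1$ and $F=\{0\}$, so the $H$-action, and therefore the given orbit equivalent action, has full isoparametric principal orbits.

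The step I expect to be the main obstacle is the fullness computation in the first paragraph: one must work with totally umbilic rather than merely totally geodesic submanifolds of $\HH^n$, and control the affine span of a principal orbit in the presence of a nontrivial fixed space $F$ and of the noncompact, de Sitter type factor; everything else is a formal combination of Theorems~\ref{ThHyperbolic} and~\ref{ThPolarDual} with the cited results of \cite{wu}.
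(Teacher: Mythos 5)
Your argument is correct and follows the paper's own route: the paper's entire proof consists of the observation that the principal orbits are full precisely when the action is as in part~(i) of Theorem~\ref{ThHyperbolic} with $L$ having no nonzero fixed vector on $\R^{n-k}$, which is exactly the criterion you establish (in more detail, via affine spans in the hyperboloid model) and then combine with Wu's results and Theorem~\ref{ThPolarDual} just as the paper implicitly does. The only quibble is that your barycentre argument by itself shows $0\in\mathrm{aff}(L\cdot w')$ but not $\spann(L\cdot w')=F^\perp$; for the latter one should invoke the standard fact that a generic orbit of a polar representation of a compact group spans the orthogonal complement of the fixed subspace (and note that the $\SO_0(1,k)$-orbit is one sheet of a two-sheeted, not a one-sheeted, hyperboloid, which does not affect the span computation).
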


\begin{proof}
It suffices to observe that the orbits of the orbits of the $H$-action on
$\HH^n$ are full if and only if the action is as described in part (i) of
Theorem~\ref{ThHyperbolic} and such the representation of~$L$ on $\R^{n-k}$
does not have any nonzero fixed vectors.
\end{proof}


\section{Polar actions on complex hyperbolic space}
\label{PACH}


To study polar actions on complex hyperbolic space, we will proceed in a
similar fashion as in the proof of Theorem~\ref{ThHyperbolic}. Polar actions on
complex projective space have been classified by Podest\`{a} and Thorbergsson
\cite{pth1}. Their result says that polar actions on $\C\P^n$ are orbit
equivalent to actions given by the following construction. Let $(G,K) =
(\Pi_{\mu = 1}^\nu G_\mu, \Pi_{\mu = 1}^\nu K_\mu)$ be a Hermitian symmetric
pair such that $G_\mu/K_\mu$ are irreducible compact Hermitian symmetric
spaces. Let $\g_\mu = \k_\mu \oplus \p_\mu$ be the corresponding
decompositions. On each $\p_\mu$ there exists a complex structure $J_\mu$,
which is unique up to sign, and we may identify $\p = \p_1 \oplus \ldots \oplus
\p_\nu$ with $\C^d$, where $d$ is the complex dimension of $G/K$. Then the
action of the group $K$ on $\C^d$ thus defined descends to a polar action on
$\C\P^{d-1}$ and conversely \cite[Theorem 3.1]{pth1}, every polar action on
$\C\P^{d-1}$ is orbit equivalent to such an action. We will say that a
representation of a compact Lie group $K$ on $\C^d$ is {\em induced by a
Hermitian symmetric space} if the $K$-action on $\C^d$ is given by the
construction just described. The irreducible Hermitian symmetric spaces of
compact type are
\begin{align}\label{EqnHermSymm}
\begin{array}{c}
\SU(p+q) / \SUxU pq,\quad
\SO(k+2)/\SO(2)\times\SO(k), \quad
\Sp(k)/\U(k),\\
\SO(2k)/\U(k), \quad
\LE_6 / \U(1)\cdot\Spin(10), \quad
\LE_7 / \U(1) \cdot \LE_6,
\end{array}
\end{align}
see \cite[Ch. X, \S6.3]{helgason}. Recall that there is some overlap between
the different types in (\ref{EqnHermSymm}), cf.\ \cite[Ch.\ X,
\S~6.4]{helgason}.

\begin{theorem}\label{ThCHn}
Let $H \subset G := \SU(1,n)$ be a reductive algebraic subgroup. Then the
$H$-action on complex hyperbolic space $\C\HH^n = \SU(1,n) / \SUxU1n $ is polar
if and only if it is orbit equivalent to one of the following actions.
\begin{enumerate}

\item The action of $\eS(\U(1,k) \times L) \subseteq \SU(1,n)$, $k= 1,
    \ldots n$, where $L \subseteq \U(n-k)$ is a subgroup whose action on
    $\C^{n-k}$ is induced by a Hermitian symmetric space.

\item The action of $\eS((\U(1) {\cdot} \SO_0(1,k)) \times L) \subseteq
    \SU(1,n)$, $k= 1, \ldots n$, where $L \subseteq \U(n-k)$ is a subgroup
    whose action on $\C^{n-k}$ is induced by a Hermitian symmetric space.

\item The action of a subgroup $L \subseteq \SUxU1n \cong \U(n)$ whose
    action on $\C^{n}$ is induced by a Hermitian symmetric space.

\end{enumerate}
In case (i) the $H$-action on $\C\HH^n$ has a totally geodesic orbit isometric
to $\C\HH^k$, in case (ii) it has a totally geodesic orbit isometric to
$\HH^k$, in case (iii) it has a fixed point.
\end{theorem}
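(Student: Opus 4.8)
The plan is to mimic the proof of Theorem~\ref{ThHyperbolic} using duality. First I would apply Theorem~\ref{ThPolarDual}: if $H$ acts polarly on $\C\HH^n = \SU(1,n)/\SUxU1n$, then a dual action of a compact connected group $H^* \subseteq \SU(n+1)$ on $\C\P^n = \SU(n+1)/\SUxU1n$ is polar, and by Theorem~\ref{ThDualAct} (together with Remark~\ref{RemKM}) we may assume the orbit $H^* \cdot [e^*]$ is totally geodesic and that (\ref{EqnDualCan}) holds with respect to a Cartan decomposition $\g^* = \k \oplus i\p$ where $\k = \Lie(\SUxU1n) \cong \u(n)$. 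By the classification of Podest\`a and Thorbergsson~\cite{pth1} quoted above, the $H^*$-action is orbit equivalent to one induced by a Hermitian symmetric pair, so in particular $H^*$ is (up to orbit equivalence) a subgroup of some $K$ with $(G,K)$ a product of irreducible Hermitian symmetric pairs, acting on $\p = \C^{n+1}$.

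The key step is then to sort out the totally geodesic orbits of such an action on $\C\P^n$. The totally geodesic submanifolds of $\C\P^n$ through the base point are (real forms of) either complex projective subspaces $\C\P^k$ or totally real real projective subspaces $\R\P^k$, corresponding on the Lie algebra level to Lie triple systems in $\p \cong \C^{n+1}$ that are complex subspaces or totally real subspaces. A totally geodesic $H^*$-orbit therefore spans an $H^*$-invariant subspace $V \subseteq \C^{n+1}$ of one of these two kinds. Decomposing $\C^{n+1} = V \oplus V^\perp$ as $H^*$-modules and using that $H^*$ acts transitively on the corresponding real form of $\P(V)$, I would read off the structure of $H^*$: in the complex case $V \cong \C^{k+1}$ and $H^*$ contains (up to the $\U(1)$-ambiguity of scalars compatible with lying in $\k \cong \u(n)$) a factor $\SU(k+1)$ or $\U(k+1)$ acting standardly on $V$ and trivially on $V^\perp$, while the complementary factor $L$ acts on $V^\perp \cong \C^{n-k}$; in the totally real case $V$ is (the complexification of) $\R^{k+1}$ and $H^*$ contains a factor $\U(1)\cdot\SO(k+1)$ acting standardly. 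Since polar representations restrict to polar representations on invariant submodules~\cite{dadok}, the complementary factor $L \subseteq \U(n-k)$ must act polarly on $\C^{n-k}$, and by \cite[Theorem~3.1]{pth1} its action is induced by a Hermitian symmetric space; this is exactly the list in items (i)--(iii) after applying the duality map $\psi^{-1}$ (which sends $\SU(k+1)$ to $\SU(1,k)$, $\U(1)\cdot\SU(k+1)$-type factors to $\U(1,k)$-type factors, and $\U(1)\cdot\SO(k+1)$ to $\U(1)\cdot\SO_0(1,k)$, while leaving the compact factor $L$ unchanged). The isometry type of the noncompact totally geodesic orbit ($\C\HH^k$, $\HH^k$, or a point) is then the dual of the totally geodesic orbit on the compact side ($\C\P^k$, $\R\P^k$, or a point), consistent with Theorem~\ref{ThDualAct}~(iii).

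For the converse I would simply observe that each action listed in (i)--(iii) is canonically embedded with respect to the Cartan decomposition fixed by $\SUxU1n$, so it has a well-defined dual action on $\C\P^n$; this dual action is of the form $K_1 \times L$ with $K_1 \in \{\SU(k+1),\U(1)\cdot\SU(k+1),\U(1)\cdot\SO(k+1)\}$ acting standardly on a subspace and $L$ acting polarly (induced by a Hermitian symmetric space) on the complement, hence it is one of the polar actions on $\C\P^n$ from \cite{pth1}; by Theorem~\ref{ThPolarDual} the original action is polar.

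The main obstacle I anticipate is the bookkeeping of the $\U(1)$-factors and the overlap of types. Unlike the real case, where $\SO(n+1) \supseteq \SO(k+1)\times\SO(n-k)$ was clean, here the relevant subgroup of $\SU(n+1)$ lives inside $\SUxU1n \cong \U(n)$ only after a careful normalization of determinants, which is why both an $\eS(\U(1,k)\times L)$-type family (from a complex totally geodesic orbit) and an $\eS((\U(1)\cdot\SO_0(1,k))\times L)$-type family (from a totally real totally geodesic orbit) appear, and one must check that these, together with the fixed-point case, exhaust all possibilities and match correctly under $\psi^{-1}$. I would handle this by working throughout with the explicit module decomposition of $\C^{n+1}$ and tracking which real form of $\P(V)$ the orbit is, rather than with abstract group-theoretic arguments; the representation-theoretic input from \cite{pth1} and the polarity-restricts-to-submodules fact from \cite{dadok} do the rest.
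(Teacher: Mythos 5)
Your proposal follows essentially the same route as the paper's proof: dualize to a polar action on $\C\P^n$ with a totally geodesic orbit through the base point, use Wolf's description of totally geodesic submanifolds to split $\C^{n+1}$ into an invariant subspace carrying that orbit and its complement, identify the two factors via the Podest\`a--Thorbergsson classification together with the fact that polar representations restrict polarly to invariant submodules, and dualize back. The one step you gloss over is why transitivity on $\C\P^k$ forces the first factor to be the standard $\U(k+1)$ rather than some other irreducible Hermitian isotropy representation; the paper settles this by citing Oni\v{s}\v{c}ik's result that a compact subgroup of $\SU(k+1)$ transitive on $\C\P^k$ is already transitive on the unit sphere of $\C^{k+1}$, so the inducing Hermitian symmetric space must have rank one.
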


\begin{proof}
Let $G^* = \SU(n+1)$ and let $K^* = \SUxU1n$. Proceeding as in the proof of
Theorem~\ref{ThHyperbolic}, we may assume $H^* \cdot [e^*]$ is a totally
geodesic orbit and $H^*$ acts polarly on $\C\P^n = G^*/K^*$ by
Theorem~\ref{ThPolarDual}. Using the natural projection map $\eS^{2n+1} \to
\C\P^n$, $(z_1, \ldots, z_{n+1}) \mapsto [z_1 \colon \ldots \colon z_{n+1}]$,
we may identify the points in $\C\P^n$ with the fibers of the Hopf fibration on
the unit sphere $\eS^{2n+1} \subset \C^{n+1}$ around the origin, i.e.\ with
orbits of unit vectors in $\C^{n+1}$ under multiplication with complex scalars
of unit norm. Replacing $H^*$ with a group whose action on $\C\P^n$ is orbit
equivalent to the $H^*$-action, if necessary, we may assume the subgroup $H^*
\subseteq \SU(n+1)$ is such that the action of $\U(1) \cdot H^*$ on $\C^{n+1}$
is induced by a Hermitian symmetric space~\cite{pth1}. As it was shown
in~\cite{wolf}, the totally geodesic submanifolds of positive dimension in
$\C\P^n$ are isometric to either $\C\P^k$ where $k = 1,\ldots,n$ or $\R\P^k$
where $k = 1,\ldots,n$ and any such totally geodesic submanifold is conjugate
by an isometry to the standard embedding of $\SU(k+1)/\SUxU1k$ or
$\SO(k+1)/\eS(\O(1)\times\O(k))$ into $\SU(n+1)/\SUxU1n$.

Let us first consider the case where $H^* \cdot [e^*]$ is isometric to
$\C\P^k$. It follows that the action of $H^*$ on $\C^{n+1}$ leaves a complex
$(k+1)$-dimensional linear subspace invariant, i.e.\ there is an
$H^*$-invariant decomposition $\C^{n+1} = \C^{k+1} \oplus \C^{n-k}$, where
$H^*$ acts irreducibly on the first summand $\C^{k+1}$. Thus the action of
$H^*$ on $\C^{k+1}$ is induced by an irreducible Hermitian symmetric space of
complex dimension~$k+1$ such that the action induced on $\C\P^k$ is transitive.
Since any compact subgroup of $\SU(k+1)$ acting transitively on $\C\P^k$ also
acts transitively on the unit sphere in $\C^{k+1}$ by~\cite{oniscik}, the
action of $H^*$ on $\C^{k+1}$ is induced by a Hermitian symmetric space of rank
one, hence by $\C\P^{k+1}$. Furthermore, the action of $H^*$ on $\C^{n-k}$ is
induced by some complex $(n-k)$-dimensional Hermitian symmetric space $Q/L$.
This shows that the $H$-action on $\C\HH^n$ is as described in item~(i).

Let us now assume $H^* \cdot [e^*]$ is isometric to $\R\P^k$. Since the
embedding $\R\P^k \subset \C\P^n$ is given by the standard embedding of
$\SO(k+1)/\eS(\O(1)\times\O(k))$ into $\SU(n+1)/\SUxU1n$ and the span of an
orbit of a representation is an invariant subspace, we may assume that we have
an $H^*$-invariant decomposition $\C^{n+1} = \C^{k+1} \oplus \C^{n-k}$. The
action of $H^*$ on $\C^{n-k}$ is induced by a complex $(n-k)$-dimensional
Hermitian symmetric space~$Q/L$.  The action of $H^*$ on the first summand
$\C^{k+1}$ is obviously irreducible and $H^* / H^* \cap K^*$ is a -- possibly
non-effective -- homogeneous presentation of $\SO(k+1)/\eS(\O(1) \times
\O(k))$. Thus the action of $H^*$ on $\C^{k+1}$ is induced by an irreducible
Hermitian symmetric space of real dimension~$2(k+1)$ whose isotropy group
contains a normal factor locally isomorphic to $\SO(k+1)$. From
(\ref{EqnHermSymm}) we deduce that the action of $H^*$ on $\C^{k+1}$ is induced
by the complex quadric $\SO(k+3) / \SO(2) \times \SO(k+1)$. Thus the $H$-action
on $\C\HH^n$ is as described in item~(ii).

It was shown in \cite{dk} that polar actions with a fixed point on $\C\HH^n$
are exactly the actions as described in item~(iii).

Now let $H \subseteq \SU(1,n)$ be a closed connected subgroup as described in
parts (i) or (ii) of the theorem. Then obviously the $H$-action on $\C\HH^n$
has a totally geodesic orbit which can be identified with $\eS(\U(1,k) \times
L) / \eS(\U(1) \times \U(k) \times L)$ or $\eS(\U(1) \cdot \SO_0(1,k) \times L)
/ \eS(\U(1) \cdot \SO(k) \times L)$ where in both cases $L$ is a compact Lie
group and we see that the group $H^*$ is of the form $\eS(\U(k+1) \times L)$ or
$\eS(\U(1) \cdot \SO(k+1) \times L)$. In view of Theorem~\ref{ThPolarDual} and
\cite[Proposition 2A.1]{pth1}, it suffices to show that the action of $\U(1)
\cdot H^*$ on $\C^{n+1}$ is induced by a Hermitian symmetric space. The action
of $\U(1) \cdot L$ on $\C^{n-k}$ is induced by a Hermitian symmetric space
$Q/L$ by the hypothesis and we see that the action of $\U(1) \cdot H^*$ on
$\C^{n+1}$ is induced by the Hermitian symmetric space $(\SU(k+2) /
\SUxU1{k+1}) \times (Q/L)$ or $(\SO(k+3) / \SO(2) \times \SO(k+1)) \times
(Q/L)$.
\end{proof}


\section{Polar actions on quaternionic hyperbolic space}
\label{PAQH}


Let us first briefly review the results of \cite[Theorem 4.1]{pth1}. Let
$$(G,K)= (\Pi_{\mu = 1}^\nu G_\mu, \Pi_{\mu = 1}^\nu K_\mu)$$ be a symmetric pair such
that $G_\mu/K_\mu$ are compact quaternion-K\"{a}hler symmetric spaces. Let $\g_\mu
= \k_\mu \oplus \p_\mu$ be the corresponding decompositions. Then we have
$K_\mu = \Sp(1) \cdot H_\mu$, where both factors are normal subgroups. Using
the quaternionic structure induced by $\ad(\sp(1))$ on $\p_\mu$, we may
identify $\p = \p_1 \oplus \ldots \oplus \p_\nu$ with $\H^d$, where $d =
\frac14\dim (G/K)$ and where $H = H_1 \times \ldots \times H_\nu$ acts linearly
on $\H^d = \R^{4d}$ in such a fashion that the $H$-action commutes with the
$\Sp(1)$-action defined by right multiplication with the unit quaternions in
$\Sp(1)$. This action of~$H$ on $\H^d$ descends to an action on $\H\P^{d-1}$
and we will say that a representation of a compact Lie group $K$ on $\H^d$ is
{\em induced by a product of $\nu$~quaternion K\"{a}hler symmetric spaces} if the
$H$-action on $\H^d$ is given by the above construction; we say that it is {\em
induced by a quaternion-K\"{a}hler symmetric space} if $\nu=1$. Under the
additional assumption that at most one of the factors $G_1/K_1, \ldots,
G_\nu/K_\nu$ is of rank greater than one, the action of~$H$ on $\H\P^{d-1}$
just defined is polar and conversely, every polar action on $\H\P^{d-1}$ is
orbit equivalent to an action of some group $K \subseteq \Sp(d)$ whose action
on $\C^d$ is induced by a product of $\nu$~quaternion K\"{a}hler symmetric spaces
where at most one of the factors is of rank greater than one. The compact
quaternion-K\"{a}hler symmetric spaces are the following:
\begin{align*}\label{EqnQKSymm}
\begin{array}{c}
\Sp(n+1) / \Sp(1) \cdot \Sp(n),\quad
\SU(n+2) / \SUxU2n, \\
\SO(n+4) / \SO(4) \times \SO(n),\quad
\LG_2 / \SO(4), \quad
\LF_4 / \Sp(1) \cdot \Sp(3), \\
\LE_6 / \Sp(1) \cdot \SU(6), \quad
\LE_7 / \Sp(1) \cdot \Spin (12), \quad
\LE_8 / \Sp(1) \cdot \LE_7,
\end{array}
\end{align*}
see~\cite[Ch.\ 14 E]{besse}.

\begin{theorem}\label{ThQHn}
Let $H \subset G := \Sp(1,n)$ be a reductive algebraic subgroup. Then the
$H$-action on complex hyperbolic space $\H\HH^n = \Sp(1,n) / \Sp(1) \times
\Sp(n)$ is polar if and only if it is orbit equivalent to one of the following
actions.
\begin{enumerate}

\item The action of $\Sp(1,k) \times \Sp(n_1) \times \ldots \times
    \Sp(n_\nu) \times L \subseteq \Sp(1,n)$, where $L$ is a subgroup
    of~$\Sp(m)$ whose action on $\H^{m}$ is induced by a quaternion K\"{a}hler
    symmetric space, where $1 \le k \le n$ and $k + n_1 + \ldots + n_\nu +
    m = n$.

\item The action of $\U(1,k) \times \Sp(n_1) \times \ldots \times
    \Sp(n_\nu) \subseteq \Sp(1,n)$, where $1 \le k \le n$ and $k + n_1 +
    \ldots + n_\nu = n$.

\item The action of $(\Sp(1) \cdot \SO_0(1,k)) \times \Sp(n_1) \times
    \ldots \times \Sp(n_\nu) \subseteq \Sp(1,n)$, where $1 \le k \le n$ and
    $k + n_1 + \ldots + n_\nu = n$.

\item The action of $\Sp(1) \times L \subseteq \Sp(n)$ where $L$ is a
    subgroup whose action on $\H^{n}$ is induced by a product of quaternion
    K\"{a}hler symmetric spaces where at most one of the factors is of rank
    greater than one.
\end{enumerate}
The $H$-action on $\H\HH^n$ has a totally geodesic orbit isometric to $\H\HH^k$
in case (i), $\C\HH^k$ in case (ii), $\HH^k$ in case (iii), a point in case
(iv).
\end{theorem}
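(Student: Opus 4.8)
The plan is to follow the same strategy as in the proofs of Theorems~\ref{ThHyperbolic} and \ref{ThCHn}, using duality to transport the problem to the compact dual $\H\P^n = \Sp(n+1)/(\Sp(1)\cdot\Sp(n))$ and invoking the classification of polar actions on $\H\P^n$ from \cite[Theorem 4.1]{pth1}. First I would assume $H$ acts polarly on $\H\HH^n$; by Theorem~\ref{ThPolarDual} there is a dual polar action of a compact connected group $H^*\subseteq\Sp(n+1)$ on $\H\P^n=G^*/K^*$, and by Theorem~\ref{ThDualAct} we may assume that the orbit $H^*\cdot[e^*]$ is totally geodesic and that (\ref{EqnDualCan}) holds. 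Replacing $H^*$ by an orbit-equivalent group, we may arrange that the action of $\Sp(1)\cdot H^*$ on $\H^{n+1}$ is induced by a product of quaternion-K\"ahler symmetric spaces with at most one factor of rank greater than one.

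Next I would analyze the possible totally geodesic orbits $H^*\cdot[e^*]$ in $\H\P^n$. The totally geodesic submanifolds of positive dimension in $\H\P^n$ are (up to isometry) $\H\P^k$, $\C\P^k$, and $\R\P^k$ for $k=1,\ldots,n$; each such submanifold corresponds to a standard embedding and its span is an $H^*$-invariant linear subspace of $\H^{n+1}$, so in each case we get an $H^*$-invariant decomposition $\H^{n+1}=W\oplus W^\perp$ with $H^*$ acting irreducibly on $W$ and $W$ being a quaternionic, complex, or real linear subspace of the appropriate dimension. Since polar representations restrict to polar representations on invariant submodules (cf.\ \cite{dadok}) and the induced action on the totally geodesic orbit is transitive, I would identify the action on $W$ using a transitivity argument: a compact group acting transitively on $\H\P^k$ (resp.\ $\C\P^k$, resp.\ $\R\P^k$) must, via \cite{oniscik}, \cite{oniscik}, act transitively on the unit sphere of $W$, forcing the factor on $W$ to be induced by a rank-one quaternion-K\"ahler symmetric space $\H\P^{k+1}$ in the first case; in the second and third cases the same reasoning pins the action down to one coming from the complex or real projective space sitting inside $\H\P^n$, whose isotropy structure then dictates the precise form of the noncompact group. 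The action on $W^\perp$ is then induced by some product of quaternion-K\"ahler symmetric spaces, with the rank restriction inherited from the hypothesis on $\Sp(1)\cdot H^*$. Translating back via $\psi^{-1}$ (replacing the compact factor $\Sp(1,k)$, $\U(1,k)$, or $\Sp(1)\cdot\SO_0(1,k)$ appropriately) yields exactly the four families listed, and the fixed-point case ($k=0$, item~(iv)) is the classification of polar actions with a fixed point on $\H\HH^n$, handled as in \cite{dk}. The converse direction is routine: for each listed action one exhibits a totally geodesic orbit of the stated type, reads off the dual group $H^*$ on $\H\P^n$, checks that $\Sp(1)\cdot H^*$ acts on $\H^{n+1}$ by a representation induced by a product of quaternion-K\"ahler symmetric spaces with at most one higher-rank factor, and concludes polarity on $\H\HH^n$ by Theorem~\ref{ThPolarDual} together with \cite[Theorem 4.1]{pth1}.

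The main obstacle I expect is the case where the totally geodesic orbit is $\R\P^k$ or $\C\P^k$ rather than $\H\P^k$: here the irreducible $H^*$-action on the invariant subspace $W$ is a \emph{quaternionic} representation (since $W$ is a summand in $\H^{n+1}$ on which $\Sp(1)$ acts) whose image on projective space realizes a real or complex projective space, and one must carefully match the list (\ref{EqnQKSymm}) of quaternion-K\"ahler symmetric spaces against the requirement that the isotropy representation contain a normal factor acting as the isometry group of $\R\P^k$ or $\C\P^k$ — this is what produces the $\U(1,k)$ and $\Sp(1)\cdot\SO_0(1,k)$ families and the bookkeeping of which quaternion-K\"ahler space gives rise to each. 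A secondary subtlety is keeping track of effectiveness: the homogeneous presentation $H^*/(H^*\cap K^*)$ of the totally geodesic orbit may be non-effective, so the transitivity arguments must be phrased in terms of the action on $W$ rather than on the orbit itself, exactly as in the proof of Theorem~\ref{ThCHn}.
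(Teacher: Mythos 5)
Your overall strategy is the same as the paper's (dualize to $\H\P^n$, invoke the Podest\`{a}--Thorbergsson classification of polar actions there, use Wolf's classification of totally geodesic submanifolds to identify the invariant decomposition $\H^{n+1}=W\oplus W^\perp$, and translate back), and your treatment of the $\H\P^k$, $\C\P^k$, $\R\P^k$ and fixed-point cases, including the effectiveness caveat, matches the paper's argument. However, there is a genuine gap: your enumeration of the totally geodesic submanifolds of $\H\P^n$ is incomplete. By Wolf's result, the totally geodesic submanifolds of positive dimension are the spheres $\eS^1,\ldots,\eS^4$ together with $\R\P^k$, $\C\P^k$, $\H\P^k$ for $k\ge 2$; while $\eS^1=\R\P^1$, $\eS^2=\C\P^1$ and $\eS^4=\H\P^1$ are absorbed into your three families, the totally geodesic $\eS^3$ (a great $3$-sphere inside a standardly embedded $\eS^4=\H\P^1$) is not isometric to any projective space and is missing from your case analysis.

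Since your argument is a case-by-case classification driven by the type of the totally geodesic orbit $H^*\cdot[e^*]$, the omitted case must be explicitly excluded, or else a hypothetical additional family of polar actions could survive. The paper does this in one short step: if $H^*\cdot[e^*]$ were an $\eS^3$, then $H^*$ would leave invariant a quaternionic subspace $\H^2\subseteq\H^{n+1}$ on which it acts by the standard $\Sp(2)$-representation (the only possibility compatible with the orbit lying in a totally geodesic $\H\P^1$ and the action being induced by a quaternion-K\"{a}hler symmetric space), and this representation has no three-dimensional orbit --- a contradiction. You should add this exclusion; with it, your proof coincides with the paper's.
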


\begin{proof}
The proof is mostly analogous to the proof of Theorem~\ref{ThCHn}. As the case
of polar actions on $\H\HH^n$ with a fixed point was settled in \cite{dk},
where it was shown they are all orbit equivalent to the actions as described in
item~(iv), we may restrict ourselves to actions without fixed points.

According to \cite{wolf}, the totally geodesic submanifolds of positive
dimension in $\H\P^n$ are isometric to $\eS^k$, $k = 1,\ldots, 4$, or $\R\P^k$,
$\C\P^k$, $\H\P^k$, where $k = 2,\ldots,n$ and any two homeomorphic totally
geodesic submanifolds are conjugate by an isometry. Hence the totally geodesic
subspaces $\R\P^k$, $\C\P^k$, $\H\P^k$ are given by the standard embeddings
$\SO(k) \subset \SU(k) \subset \Sp(k) \subseteq \Sp(n)$ for $k = 2,\ldots,n$
and also the totally geodesic spheres $\eS^1 = \R\P^1  \subset \eS^2 = \C\P^1
\subset \eS^3 \subset \eS^4 = \H\P^1 \subseteq \H\P^n$ are given by the
standard embeddings.

First assume the totally geodesic orbit $H^* \cdot [e^*]$ is isometric to
$\H\P^k$, where $k \in \{1,\ldots,n\}$. By an analogous argument as in the
proof of Theorem~\ref{ThCHn}, we see that $H = \Sp(1,k) \times L$ and $H \cap K
= \Sp(1) \times \Sp(n) \times L$, where $L \subseteq \Sp(n-k)$. Since $H^*$ is
of the form $\Sp(k+1) \times L$, it follows from \cite[Theorem 4.1]{pth1} that
the action of~$L$ on $\H^{n-k}$ is induced by a product of quaternion K\"{a}hler
symmetric spaces where at most one factor is of rank greater than one. Hence we
have an action as described in item~(i) of the theorem.

Now consider the case where the totally geodesic orbit $H^* \cdot [e^*]$ is
isometric to $\C\P^k$ or $\R\P^k$, where $1 \le k \le n$. An argument analogous
as in the proof of Theorem~\ref{ThCHn} shows that the $H$-action on~$M$ is as
described in items~(ii) or (iii).

It remains the case where the totally geodesic orbit $H^* \cdot [e^*]$ is
isometric to a three-sphere, which is a great sphere in a standardly embedded
totally geodesic $\eS^4 = \H\P^1 \subseteq \H\P^n$. It follows that the action
of $H^*$ on $\H^{n+1}$ leaves a quaternionic subspace isomorphic to $\H^2$
invariant on which $H^*$ acts by the standard $\Sp(2)$-representation. But this
action does not have a three-dimensional orbit and thus we have arrived at a
contradiction.

Conversely, it is easy to see by an analogous argument as in the proof of
Theorem~\ref{ThCHn} that the actions as described in parts~(i) to (iv) have
polar dual actions on $\H\P^n$ and are thus polar by Theorem~\ref{ThPolarDual}.
Indeed, the polar dual action on $\H\P^n$ is induced by $(\Sp(k+2) / \Sp(1)
\times \Sp(k+1)) \times (Q/L)$ in case~(i), it is induced by $(\SU(k+3) /
\SUxU2{k+1}) \times (Q/L)$ in case~(ii), it is induced by $(\SO(k+5) / \SO(4)
\times \SO(k+1)) \times (Q/L)$ in case~(iii) and induced by $(\Sp(2) / \Sp(1)
\times \Sp(1)) \times (Q/L)$ in case~(iv), where $Q/L$ is in each case a
product of quaternion K\"{a}hler symmetric spaces.
\end{proof}


\section{Polar actions on the Cayley hyperbolic plane}
\label{PACayHP}


In this section we classify polar actions on the Cayley hyperbolic plane
$\Ca\HH^2 = \LF_{4(-20)} / \Spin(9)$ by reductive algebraic subgroups of the
isometry group. Polar actions on the Cayley plane $\Ca\P^2 = \LF_{4} /
\Spin(9)$ -- which is the compact dual of~$\Ca\HH^2$ -- were classified by
Podest\`{a} and Thorbergsson, see \cite[Theorem~5.1]{pth1}. Their result is the
following. A connected subgroup $H$ of $\LF_4$ acts polarly and with a fixed
point on $\Ca\P^2$ if and only if it is conjugate to one of $\Spin(9)$,
$\Spin(8)$, $\SO(2) \cdot \Spin(7)$, or $\Spin(3) \cdot \Spin(6)$; it acts
polarly and without fixed point if and only if it is conjugate to one of
$\Sp(3) \cdot \Sp(1)$, $\Sp(3) \cdot \U(1)$, $\Sp(3)$, or $\SU(3) \cdot
\SU(3)$, where the first three groups act with cohomogeneity one. In fact, the
actions of the first three groups are orbit equivalent. The action of the last
group $\SU(3) \cdot \SU(3)$ is of cohomogeneity two.

Let us also review the results of \cite{wolf} concerning totally geodesic
submanifolds of $\Ca\P^2$. All totally geodesic submanifolds of positive
dimension in  $\Ca\P^2$ are homothetic to one of $\eS^1$, $\eS^2, \ldots,
\eS^8$, $\R\P^2$, $\C\P^2$, $\H\P^2$, or $\Ca\P^2$. Moreover, any two
homeomorphic totally geodesic subspaces are conjugate by an isometry.

Our proof of the theorem below does not proceed analogously as for
Theorems~\ref{ThHyperbolic}, \ref{ThCHn} and \ref{ThQHn}, instead we will
consider the polar actions on $M^*$ and classify all actions dual to them.

\begin{theorem}\label{ThPolarHypCay}
Let $H \subset \LF_{4(-20)}$ be a connected reductive algebraic subgroup. Then
the $H$-action on the Cayley hyperbolic plane $\Ca\HH^2 = \LF_{4(-20)} /
\Spin(9)$ is polar if and only if $H$ is conjugate to one of the subgroups $H$
as given in Table~\ref{TbPolarHypCay}.
\begin{table}[h]\rm
\hspace{7em}
\begin{tabular}{|c|c|c|c|}
\hline
\stru $H$ & ${\hbox{cohomo-}}\atop{\hbox{geneity}}$ & ${\hbox{totally}}\atop{\hbox{geodesic orbit}}$   \\
\hline\hline
\str $\Spin(9)$ & $1$ & \{pt.\} \\
\hline
\str $\Spin(1,8)$ & $1$ & $\HH^8$ \\
\hline
\str $\Spin(8)$ & $2$ & \{pt.\} \\
\hline
\str $\Spin(1,7)$ & $2$ & $\HH^7$ \\
\hline
\str $\SO(2) \cdot \Spin(7)$ & $2$ & \{pt.\} \\
\hline
\str $\SO_0(1,1) \cdot \Spin(7)$ & $2$ & $\R$ \\
\hline
\str $\SO(2) \cdot \Spin(1,6)$ & $2$ & $\HH^6$ \\
\hline
\str $\Spin(3) \cdot \Spin(6)$ & $2$ & \{pt.\} \\
\hline
\str $\Spin(1,2) \cdot \Spin(6)$ & $2$ & $\HH^2$ \\
\hline
\str $\Spin(3) \cdot \Spin(1,5)$ & $2$ & $\HH^5$ \\
\hline
\triplestru \begin{tabular}{l}
   $\Sp(1,2) \cdot \Sp(1)$ \\
   $\Sp(1,2) \cdot \U(1)$ \\
   $\Sp(1,2)$ \\
\end{tabular} & $1$ & $\H\HH^2$ \\
\hline
$\stru \SU(1,2) \cdot \SU(3)$ & $2$ & $\C\HH^2$ \\
\hline
\end{tabular}
\bl\caption{\rm Polar actions on the Cayley hyperbolic
plane}\label{TbPolarHypCay}
\end{table}\rm
\end{theorem}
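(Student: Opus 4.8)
The plan is to derive the classification from the known classification of polar actions on the compact dual $\Ca\P^2 = \LF_4/\Spin(9)$ by means of the duality correspondence. Fix a Cartan decomposition of the Lie algebra of $\LF_{4(-20)}$ with $\k=\spin(9)$ and let $\psi$ be the map~(\ref{EqnDualityMap}). By Remark~\ref{RemKM} and Theorem~\ref{ThPolarDual}, a connected reductive algebraic subgroup $H\subseteq\LF_{4(-20)}$ acts polarly on $\Ca\HH^2$ if and only if, after conjugation, $\h$ is canonically embedded and the dual subgroup $H^*$ with $\h^*=\psi(\h)$ acts polarly on $\Ca\P^2$. Conversely, following Section~\ref{Reverse}, every action obtained this way is $\psi^{-1}(\h^*)$ for some connected reductive algebraic $H^*\subseteq\LF_4$ which acts polarly on $\Ca\P^2$ and is, up to conjugacy, invariant under the geodesic symmetry at the base point, i.e.\ satisfies~(\ref{EqnDualCan}); geometrically, the latter means that $H^*$ possesses a totally geodesic orbit $\mathcal O$ realizing the decomposition~(\ref{EqnDualCan}), and $\psi^{-1}$ then replaces $\mathcal O$ by its noncompact dual symmetric space. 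As one compact action can be dual to several nonconjugate noncompact ones (cf.\ Example~\ref{ExplHerm}), the heart of the matter is to enumerate, for each polar $H^*$ on $\Ca\P^2$, all of its totally geodesic orbits that satisfy~(\ref{EqnDualCan}).

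By \cite[Theorem~5.1]{pth1}, the polar actions on $\Ca\P^2$ are, up to conjugacy, those of $\Spin(9)$, $\Spin(8)$, $\SO(2)\cdot\Spin(7)$, $\Spin(3)\cdot\Spin(6)$ (each with a fixed point), of $\Sp(3)\cdot\Sp(1)$, $\Sp(3)\cdot\U(1)$, $\Sp(3)$ (orbit equivalent, of cohomogeneity one) and of $\SU(3)\cdot\SU(3)$ (of cohomogeneity two); by \cite{wolf}, the totally geodesic submanifolds of positive dimension of $\Ca\P^2$ are, up to homothety, $\eS^1,\dots,\eS^8$, $\R\P^2$, $\C\P^2$, $\H\P^2$, $\Ca\P^2$. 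I would now go through each of these groups and list the relevant totally geodesic orbits. In each of the four fixed-point cases the fixed point itself gives a dual action with a fixed point on $\Ca\HH^2$, by the same group; these were already obtained in \cite{dk}. For $\Spin(9)$ the remaining totally geodesic orbit is the polar set $\Ca\P^1=\eS^8$ of its fixed point, presented as $\Spin(9)/\Spin(8)$ with noncompact dual $\HH^8$, giving $\Spin(1,8)$. Since $\Spin(8)$, $\SO(2)\cdot\Spin(7)$ and $\Spin(3)\cdot\Spin(6)$ may be taken to be subgroups of $\Spin(9)$ fixing the same point, their actions on $\Ca\P^1=\eS^8$ are the restrictions through $\Spin(9)\to\SO(9)$, and one reads off the remaining totally geodesic orbits as great subspheres of $\eS^8$: for $\Spin(8)$ the sphere $\eS^7=\Spin(8)/\Spin(7)$ (dual $\HH^7$), yielding $\Spin(1,7)$; for $\SO(2)\cdot\Spin(7)$ the circle $\eS^1$ of the $\SO(2)$-factor, which is flat with dual $\R$, and the sphere $\eS^6=\Spin(7)/\Spin(6)$ with dual $\HH^6$, yielding $\SO_0(1,1)\cdot\Spin(7)$ and $\SO(2)\cdot\Spin(1,6)$; for $\Spin(3)\cdot\Spin(6)$ the spheres $\eS^2=\Spin(3)/\Spin(2)$ and $\eS^5=\Spin(6)/\Spin(5)$, with duals $\HH^2$ and $\HH^5$, yielding $\Spin(1,2)\cdot\Spin(6)$ and $\Spin(3)\cdot\Spin(1,5)$. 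Here the ``mixed'' orbits $\eS^a\times\eS^b$ with $a,b\ge1$ are not totally geodesic, as no product of spheres occurs in Wolf's list, and one checks that there are no further totally geodesic orbits. For the cohomogeneity-one action of $\Sp(3)\cdot\Sp(1)$ the only totally geodesic orbit of positive dimension is the standardly embedded $\H\P^2=\Sp(3)/(\Sp(2)\times\Sp(1))$, with dual $\H\HH^2$, giving $\Sp(1,2)\cdot\Sp(1)$ and, from the orbit equivalent actions, $\Sp(1,2)\cdot\U(1)$ and $\Sp(1,2)$; for $\SU(3)\cdot\SU(3)$ it is the standardly embedded $\C\P^2=\SU(3)/\eS(\U(1)\times\U(2))$, with dual $\C\HH^2$, giving $\SU(1,2)\cdot\SU(3)$. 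This reproduces Table~\ref{TbPolarHypCay}; the cohomogeneities are preserved by Theorem~\ref{ThDualAct}~(v), and the converse direction is immediate since each group in the table is canonically embedded with a totally geodesic orbit of the indicated type, so its dual action on $\Ca\P^2$ is the corresponding polar action from \cite[Theorem~5.1]{pth1} and the action on $\Ca\HH^2$ is polar by Theorem~\ref{ThPolarDual}.

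I expect the main obstacle to be the completeness of the lists of totally geodesic orbits: for each of the groups above one must combine the classification of \cite{wolf} with an explicit analysis of the orbit structure to exclude further totally geodesic orbits, and --- since being a totally geodesic orbit is necessary but not sufficient for~(\ref{EqnDualCan}) --- one must verify in each case that the candidate orbit actually realizes the decomposition~(\ref{EqnDualCan}) (equivalently, that $H^*$ can be conjugated to be invariant under the geodesic symmetry at a point of the orbit). A subsidiary point requiring care is the identification of $\psi^{-1}(\h^*)$: the factor of $H^*$ acting on the totally geodesic orbit is to be replaced by the real form corresponding to the noncompact dual of that orbit (so $\Spin(k{+}1)$ by $\Spin(1,k)$, $\SU(3)$ by $\SU(1,2)$, $\Sp(3)$ by $\Sp(1,2)$, $\SO(2)$ by $\SO_0(1,1)$), while the factors acting trivially on the orbit are unchanged.
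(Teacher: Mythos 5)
Your proposal follows essentially the same route as the paper: dualize the Podest\`{a}--Thorbergsson classification of polar actions on $\Ca\P^2$, and for each polar $H^*$ enumerate the totally geodesic orbits realizing~(\ref{EqnDualCan}); your table entries and the identification of $\psi^{-1}(\h^*)$ are all correct. Where the two arguments differ is precisely at the point you flag as the main obstacle, the completeness of the orbit enumeration. For the subgroups of $\Spin(9)$ you argue geometrically via the action on the polar $\eS^8$ of the fixed point and great subspheres; this produces the right list but only accounts for totally geodesic orbits \emph{inside} $\{p\}\cup\eS^8$, not for orbits lying in the intermediate distance spheres. The paper closes this by an algebraic constraint: if $\h^*=(\h^*\cap\k^*)\oplus(\h^*\cap\p^*)$ with $\h^*\not\subset\k^*$, then $(H^*,H^*\cap K^*)$ is a symmetric pair whose quotient is a rank one symmetric space (being totally geodesic in $\Ca\P^2$), which by the classification of symmetric pairs forces $\h^*\cap\k^*$ (e.g.\ $\spin(7)$ for $H^*=\Spin(8)$) and hence the orbit type; existence is then checked against Wolf's list. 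For the two Hermann actions ($\Spin(9)$ and $\Sp(3)\cdot\Sp(1)$) the paper simply reads the totally geodesic orbit types off \cite[Table~1]{z2z2}, which is what justifies your assertion that $\H\P^2$ is the only positive-dimensional totally geodesic orbit of $\Sp(3)\cdot\Sp(1)$.

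The one genuine gap is the case $H^*=\SU(3)\cdot\SU(3)$. The two $\SU(3)$-factors are isomorphic but \emph{not} conjugate by any automorphism of $\LF_4$ (one corresponds to a long-root $\LA_2$-subsystem, the other to a short-root one), so a priori there could be two nonconjugate dual actions, with either factor replaced by $\SU(1,2)$. The symmetric-pair constraint gives $\h^*\cap\k^*\cong\suxu12\oplus\su(3)$, i.e.\ one entire $\SU(3)$-factor must lie in $\Spin(9)$; the paper observes that only the short-root factor is conjugate into $\Spin(9)\subset\LF_4$, and invokes \cite[Lemma~2B.3]{pth1} for the existence of the totally geodesic $\C\P^2$-orbit. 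Without this root-length argument you cannot conclude that the table contains exactly one entry $\SU(1,2)\cdot\SU(3)$ rather than two nonconjugate ones (or rule out that the wrong factor is the one that becomes noncompact). Everything else in your sketch is a matter of filling in checks you have correctly anticipated.
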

For each action in Table~\ref{TbPolarHypCay} the cohomogeneity and the
(uniquely defined) type of totally geodesic orbit is given. Actions which are
orbit equivalent to each other are listed in consecutive rows of the table
without separating horizontal lines.

\begin{proof}
The case of a polar action on $\Ca\HH^2$ with a fixed point was already settled
in \cite{dk}, the result is that the subgroups of $\Spin(9)$ acting polarly
with a fixed point on $M = \Ca\HH^2$ are exactly the same as those acting
polarly with a fixed point on $M^* = \Ca\P^2$. Hence we may assume for the
remaining part of the proof that the action of~$H$ on $M$ has a totally
geodesic orbit of positive dimension. Let $G^*$ be the compact Lie group of
type $\LF_4$ and let $K^* = \Spin(9)$. Let $\g^* = \k^* \oplus \p^*$ be the
usual decomposition. We will determine all closed connected subgroups $H^*
\subset G^*$ acting polarly on $M^* = G^* / K^*$ and such that $\h^* = (\h^*
\cap \k^*) \oplus (\h^* \cap \p^*)$, proceeding similarly as in
Example~\ref{ExplHerm}. As we do not need to consider fixed points, we may
ignore the cases where $\h^* \subseteq \k^*$. As pointed out above, the
conjugacy classes of connected closed subgroups $H^*
 \subset G^*$ acting polarly on $M^*$ have been determined in~\cite{pth1}.

Let us start with the isotropy action, i.e.\ the action of $\Spin(9)$ on $M^* =
\LF_4 / \Spin(9)$. As this is a Hermann action, the desired information can be
read off from \cite[Table~1]{z2z2}. We see that, apart from the fixed point of
this action, the only other type of totally geodesic orbit which occurs is
$\eS^8 = \Spin(9) / \Spin(8)$; since we are looking at an action of
cohomogeneity one, there are only two singular orbits. It follows that there is
exactly one subgroup of $\LF_4$ conjugate to $\Spin(9)$ whose orbit through
$[e^*]$ is totally geodesic and this action is dual to the action of
$\Spin(1,8)$ on $M^*$.

Let us now consider the proper subgroups of $\Spin(9)$ which act polarly on
$\Ca\P^2$. Consider the action of $H^* = \Spin(8)$ on~$\Ca\P^2$. If $\h^* =
(\h^* \cap \k^*) \oplus (\h^* \cap \p^*)$ and $\h^* \not\subset \k^*$ then it
follows that $\h^* \cap \k^* = \spin(7)$ by the classification of symmetric
spaces~\cite{helgason}, since $H^* / H^* \cap K^*$ is a rank one symmetric
space in this case. From the argument on the isotropy action of $\Spin(9)$
above, we see that a suitable conjugate of $\Spin(8) \subset \LF_4$ actually
has a totally geodesic orbit of type $\eS^7$. The other cases are similar.

We will now consider the Hermann action of $\Sp(3) \cdot \Sp(1)$ on $M^*$. It
follows from \cite[Table~1]{z2z2} that it has only one totally geodesic orbit
which is homothetic to $\H\P^2 = \Sp(3) / \Sp(1) \times \Sp(2)$. This action is
obviously dual to the action of $\Sp(1,2) \cdot \Sp(1)$ on $\Ca\HH^2$; the
$\Sp(1)$-factor acts trivially on the totally geodesic orbit and we see that
more generally the action of $\Sp(1,2) \cdot L$ on $\Ca\HH^2$ is dual to the
action of $\Sp(3) \cdot L$, where $L \subseteq \Sp(1)$ is a closed connected
subgroup.

Finally, it remains to consider the action of $H^* = \SU(3) \cdot \SU(3)$ on
$M^*$. Note that the two isomorphic $\SU(3)$-factors are not conjugate by any
automorphism of $\LF_4$. In fact, the two simple factors correspond to two
subsystems both of type~$\LA_2$ inside the root system of type $\LF_4$, which
are orthogonal to each other, one consisting of long roots, the other
consisting of short roots, see e.g.\ \cite[Ch.~ \S3.11]{oniscikBook}. Assume we
have $\h^* = (\h^* \cap \k^*) \oplus (\h^* \cap \p^*)$. Then $(H^*, H^* \cap K^*)$
is a symmetric pair such that $H^* / H^* \cap K^*$ is a rank one symmetric
space, the only possibility being $\h^* \cap \k^* \cong \suxu12 \oplus \su(3)$.
In fact, it has been shown in \cite[Lemma~2B.3]{pth1} that the action under
consideration has a totally geodesic orbit of type $\C\P^2$. This shows that
the action of $\SU(1,2) \cdot \SU(3)$ on $M$ is dual to the $H^*$-action on
$M^*$. Furthermore, there are no other totally geodesic orbits. To see this, it
suffices to note that only one of the $\SU(3)$-factors is conjugate to a
subgroup of $\Spin(9) \subset \LF_4$, namely the one whose roots are short.
\end{proof}

\begin{corollary}
A polar action of a compact Lie group on $\Ca\P^2$ has a totally geodesic orbit.
\end{corollary}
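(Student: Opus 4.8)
The plan is to deduce the statement from the classification of polar actions on $\Ca\P^2$ obtained by Podest\`{a} and Thorbergsson~\cite{pth1} and recalled at the beginning of this section, together with the orbit-type information already extracted in the proof of Theorem~\ref{ThPolarHypCay}. First I would reduce to the case of a connected acting group. One may assume the $H$-action is effective, since passing to the quotient by the ineffective kernel changes neither the orbits nor the polarity of the action. Then the identity component satisfies $H_0 \subseteq \Iso(\Ca\P^2)_0 = \LF_4$, and the connected components of the $H$-orbits are precisely the $H_0$-orbits; hence the identity map of $\Ca\P^2$ is an orbit equivalence between the $H$-action and the $H_0$-action. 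In particular the $H_0$-action is again polar, and if it has a totally geodesic orbit $O$, then the $H$-orbit containing $O$ is a finite disjoint union of isometric copies of $O$ and is therefore totally geodesic as well. So it suffices to prove the corollary for the connected group $H_0 \subseteq \LF_4$.

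So let $H \subseteq \LF_4$ be connected and act polarly on $\Ca\P^2$. If this action has a fixed point $p$, then $H \cdot p = \{p\}$ is a (zero-dimensional) totally geodesic orbit and we are done. Otherwise, by~\cite[Theorem~5.1]{pth1} the group $H$ is conjugate to one of $\Sp(3) \cdot \Sp(1)$, $\Sp(3) \cdot \U(1)$, $\Sp(3)$, or $\SU(3) \cdot \SU(3)$, and for each of these a totally geodesic orbit was already produced in the proof of Theorem~\ref{ThPolarHypCay}: it was shown there, using~\cite[Table~1]{z2z2}, that $\Sp(3) \cdot \Sp(1)$ has a totally geodesic orbit homothetic to $\H\P^2 = \Sp(3) / \Sp(1) \times \Sp(2)$ on which the $\Sp(1)$-factor acts trivially -- so that the same orbit is totally geodesic for $\Sp(3) \cdot \U(1)$ and for $\Sp(3)$ -- and, by~\cite[Lemma~2B.3]{pth1}, that the action of $\SU(3) \cdot \SU(3)$ has a totally geodesic orbit homothetic to $\C\P^2$.

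This exhausts the Podest\`{a}--Thorbergsson list, so every polar action of a connected compact Lie group on $\Ca\P^2$ has a totally geodesic orbit, and by the first paragraph the connectedness hypothesis may be dropped. I do not anticipate a genuine obstacle here: the entire content is the classification~\cite{pth1} together with the case-by-case orbit analysis already carried out for Theorem~\ref{ThPolarHypCay}, and the only points that require a sentence of care are the invariance of the property of possessing a totally geodesic orbit under orbit equivalence and the reduction to connected groups, both of which follow immediately from the facts that orbit equivalences are isometries and preserve the partition of the space into connected components of orbits. It is perhaps worth stressing that the argument relies essentially on the explicit classification~\cite{pth1} for the Cayley plane rather than on any general principle; compare~Examples~\ref{RemCpt}.
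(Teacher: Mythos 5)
Your argument is correct and is essentially the paper's own proof: the corollary is read off from the Podest\`{a}--Thorbergsson classification together with the case-by-case identification of totally geodesic orbits (fixed points, the $\H\P^2$-orbit of $\Sp(3)\cdot\Sp(1)$ and its suborbit-equivalent companions, and the $\C\P^2$-orbit of $\SU(3)\cdot\SU(3)$) already carried out in the proof of Theorem~\ref{ThPolarHypCay}. Your explicit reduction to connected groups is a reasonable added precision that the paper leaves implicit.
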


\begin{proof}
Follows from the proof of Theorem~\ref{ThPolarHypCay}.
\end{proof}


\section{Conclusion}
\label{Conclusion}


The method of dual actions turns out to be a useful tool for the study of
isometric Lie group actions on symmetric spaces of the noncompact type. Under
the hypothesis that the action is induced by a reductive algebraic subgroup of
the isometry group, the study of such actions is reduced to considering the
action of a compact Lie group on a dual compact symmetric space. This method is
especially convenient for studying polar and hyperpolar actions, since we have
proved that an action is (hyper)polar if and only if its dual action is
(hyper)polar. Using this fact we are able to generalize a number of
classification results from the compact to the noncompact setting. In
particular, this provides many new examples of polar and hyperpolar actions on
symmetric spaces of the noncompact type. However, the relation given by duality
between isometric Lie group actions on a symmetric space of the noncompact type
on the one hand and on a compact dual on the other hand is only a partially
defined map, as there are examples of (polar) actions on noncompact symmetric
spaces, e.g.\ homogenous foliations by horospheres on hyperbolic space, for
which no dual action exists. Nevertheless, the method covers an important
aspect of polar actions in the noncompact setting. Indeed, it is an interesting
question if the methods developed by Berndt, D\'{\i}az-Ramos and Tamaru
\cite{BDRT08}, \cite{bt1}, \cite{bt2}, \cite{bt3} can be combined with our
approach to obtain complete classification results for (hyper)polar actions on
symmetric spaces of the noncompact type. It is conceivable that the method
described in this article has further potential applications beyond the theory
of polar actions.


\end{document}